\documentclass{amsart}

\usepackage{float}
\usepackage{amsmath}
\usepackage{amsfonts}
\usepackage{amssymb}
\usepackage{graphicx}
\usepackage{hyperref}
\usepackage{mathrsfs}
\usepackage{pb-diagram}
\usepackage{epstopdf}
\usepackage{amsmath,amsfonts,amsthm,enumerate,amscd,latexsym,curves}
\usepackage{bbm}
\usepackage[mathscr]{eucal}
\usepackage{epsfig,epsf,xypic,epic}
\usepackage{caption}
\usepackage{subcaption}
\usepackage{tikz}
\usepackage{geometry}
\usepackage{tikz-cd}
\usepackage{enumitem}

\usetikzlibrary{matrix,arrows,decorations.pathmorphing}


\newtheorem{theorem}{Theorem}[section]

\newtheorem{lemma}[theorem]{Lemma}
\newtheorem{corollary}[theorem]{Corollary}

\newtheorem{definition}[theorem]{Definition}
\newtheorem{assumption}[theorem]{Assumption}

\newtheorem{example}[theorem]{Example}

\newtheorem{proposition}[theorem]{Proposition}

\newtheorem{remark}[theorem]{Remark}

\newcommand{\inner}[1]{\langle #1\rangle}

\newcommand{\bb}[1]{\mathbb{#1}}
\newcommand{\cu}[1]{\mathcal{#1}}
\newcommand{\til}[1]{\widetilde{#1}}
\newcommand{\msc}[1]{\mathscr{#1}}

\newcommand{\mf}[1]{\mathfrak{#1}}

\tabcolsep=12pt

\begin{document}
	
	\title[]{Tropical Lagrangian multi-sections and smoothing of locally free sheaves over degenerate Calabi-Yau surfaces}
	\author[Chan]{Kwokwai Chan}
	\address{Department of Mathematics\\ The Chinese University of Hong Kong\\ Shatin\\ Hong Kong}
	\email{kwchan@math.cuhk.edu.hk}
	\author[Ma]{Ziming Nikolas Ma}
	\address{Department of Mathematics\\ Southern University of Science and Technology\\ Nanshan District \\ Shenzhen\\ China\\}
	\email{mazm@sustech.edu.cn}
	\author[Suen]{Yat-Hin Suen}
	\address{Center for Geometry and Physics\\ Institute for Basic Science (IBS)\\ Pohang 37673\\ Republic of Korea}
	\email{yhsuen@ibs.re.kr}
	\date{\today}

	\begin{abstract}
		We introduce the notion of tropical Lagrangian multi-sections over a $2$-dimensional integral affine manifold $B$ with singularities, and use them to study the reconstruction problem for higher rank locally free sheaves over Calabi-Yau surfaces. To certain tropical Lagrangian multi-sections $\bb{L}$ over $B$, which are explicitly constructed by prescribing local models around the ramification points, we construct locally free sheaves $\cu{E}_0(\bb{L},{\bf{k}}_s)$ over the singular projective scheme $X_0(B,\msc{P},s)$ associated to $B$ equipped with a polyhedral decomposition $\msc{P}$ and a gluing data $s$. We then find combinatorial conditions on such an $\bb{L}$ under which the sheaf $\cu{E}_0(\bb{L},{\bf{k}}_s)$ is simple. This produces explicit examples of smoothable pairs $(X_0(B,\msc{P},s),\cu{E}_0(\bb{L},{\bf{k}}_s))$ in dimension 2.
	\end{abstract}
	
	\maketitle
	
	\tableofcontents

	\section{Introduction}\label{sec:intro}
	
	\subsection{Background}
	The spectacular Gross-Siebert program \cite{GS1, GS2, GS11} is usually referred as an algebro-geometric approach to the famous Strominger-Yau-Zaslow (SYZ) conjecture \cite{SYZ}. It gives an algebro-geometric way to construct mirror pairs. A polarized Calabi-Yau manifold $\check{X}$ near a large volume limit should admit a toric degeneration $\check{p}:\check{\cu{X}}\to \Delta = \text{Spec}(\bb{C}[[t]])$ to a singular Calabi-Yau variety $\check{X}_0:=\check{p}^{-1}(0)$, which appears as a union of toric varieties that intersect along toric strata. By gluing the fans of these toric components, we obtain an integral affine manifold $\check{B}$ with singularities together with a polyhedral decomposition $\check{\msc{P}}$. Then by choosing a strictly convex multi-valued piecewise linear function $\check{\varphi}$ on $\check{B}$, a mirror family $p:\cu{X}\to \Delta$ can be obtained by the following procedure (the fan construction):
	\begin{enumerate}
		\item Take the discrete Legendre transform $(B, \msc{P}, \varphi)$ of $(\check{B}, \check{\msc{P}}, \check{\varphi})$.
		\item Let $X_v$ be the toric variety associated to the fan $\Sigma_v$, defined around a vertex $v\in B$. Glue these toric varieties together along toric divisors, possibly modified with some twisted gluing data $s$, to obtain a projective scheme $X_0(B,\msc{P},s)$.
		\item Smooth out $X_0(B,\msc{P},s)$ to obtain a family $p:\cu{X}\to \Delta$.
	\end{enumerate}
	This is usually referred as the \emph{reconstruction problem} in mirror symmetry.
	
	The most difficult step is (3), namely, to prove that $ X_0(B,\msc{P},s)$ is smoothable. In \cite{GS1}, Gross and Siebert showed that $ X_0(B,\msc{P},s)$ carries the structure of a \emph{log scheme} and it is \emph{log smooth} away from a subset $Z\subset X_0(B,\msc{P},s)$ of codimension at least 2. The subset $Z$ should be thought of as the singular locus of an SYZ fibration (i.e. Lagrangian torus fibration) on the original side and it was the main obstacle in the reconstruction problem. 
	Inspired by Kontsevich-Soibelman's earlier work \cite{KS_HMS_torus_fibration} and Fukaya's program \cite{Fukaya_asymptotic_analysis}, Kontsevich and Soibelman invented the notion of \emph{scattering diagrams} in the innovative work \cite{KS_scattering} and applied it to solve the reconstruction problem in dimension 2 over non-Archimedean fields. This was extensively generalized by Gross and Siebert \cite{GS11}, in which they solved the reconstruction problem over $\mathbb{C}$ in \emph{any} dimension. Roughly speaking, they defined a notion called \emph{structure}, which consists of combinatorial (or tropical) data called \emph{slaps} and \emph{walls}. Applying Kontsevich-Soibelman's scattering diagram technique, they were able to construct a remarkable {\em explicit} order-by-order smoothing of $ X_0(B,\msc{P},s)$. Recently, in \cite{CLM_smoothing} and \cite{Ruddat_smoothing}, it was shown that purely algebraic techniques were enough to prove the existence of smoothing and this can be applied to a more general class of varieties (called toroidal crossings varieties).
	
	In view of Kontsevich's homological mirror symmetry (HMS) conjecture \cite{HMS}, it is natural to ask if one can reconstruct coherent sheaves from combinatorial or tropical data as well, where the latter should arise as tropical limits of Lagrangian submanifolds on the mirror side. In the rank one case, this was essentially accomplished by the series of works \cite{GHK11, GHS16, Gross-Siebert16}, in which the Gross-Siebert program was extended and applied to reconstruct (generalized) theta functions, or sections of ample line bundles, on Calabi-Yau varieties, proving a strong form of Tyurin's conjecture \cite{Tyurin}. This paper represents an initial attempt to tackle the reconstruction problem in the higher rank case. We will restrict our attention to the dimension 2, as in \cite{GHK11}. 

	\subsection{Main results}
	Recall that the third author of this paper demonstrated in \cite{Suen_TP2} that the tangent bundle $T_{\bb{P}^2}$ of the complex projective plane $\bb{P}^2$ can be reconstructed from some tropical data on the fan of $\bb{P}^2$, which he called \emph{a tropical Lagrangian multi-section}. The definition there, however, is not general enough to cover more interesting cases which could arise in mirror symmetry. In Section \ref{sec:trop_Lag}, we give a more general definition of tropical Lagrangian multi-sections over a $2$-dimensional integral affine manifold $B$ with singularities equipped with a polyhedral decomposition $\msc{P}$.\footnote{We will later assume that $(B, \msc{P})$ positive and simple, which will put constraints on the singularities; see Section \ref{sec:GS}.}
	We expect such an object to arise as a certain kind of tropicalization of Lagrangian multi-sections in an SYZ fibration of the mirror.
	Roughly speaking, it is a quadruple $\bb{L}:=(L,\pi,\msc{P}_{\pi},\varphi')$ consisting of a topological (possibly branched) covering map $\pi:L\to B$, a polyhedral decomposition $\msc{P}_{\pi}$ on $L$ respecting $\msc{P}$ and a multi-valued piecewise linear function $\varphi'$ on $L$. A key difference from the usual polyhedral decomposition is that we require the ramification locus $S' \subset L$ of $\pi:L\to B$ to be contained in the codimension 2 strata of $(L,\msc{P}_{\pi})$. In particular, the pullback affine structure on $L$ is also singular along the ramification locus $S'$ (see Definition \ref{def:trop_lag}).
	\begin{remark}
		Our definition of tropical Lagrangian multi-sections recovers the cone complex associated to a toric vector bundle constructed by Payne \cite{branched_cover_fan}, at least when the cone complex is a smooth manifold.
	\end{remark}
	To construct examples of tropical Lagrangian multi-sections, we need good local models. In Section \ref{sec:local_model}, we describe some explicit local models of $\varphi'$ around the ramification points. To explain these, recall that Payne \cite{branched_cover_fan} used an equivariant structure of $T_{\bb{P}^2}$ on each affine chart to define a piecewise linear function $\varphi_{2,1}$ on a 2-fold covering of $|\Sigma_{\bb{P}^2}|\cong N_{\bb{R}}$ that takes the form
	$$\varphi_{2,1}:=\begin{cases}
	-\xi_1 &\text{ on }\sigma_0^+,\\
	-\xi_2 &\text{ on }\sigma_0^-\\
	\xi_1 &\text{ on }\sigma_1^+,\\
	\xi_1-\xi_2 &\text{ on }\sigma_1^-,\\
	-\xi_1+\xi_2 &\text{ on }\sigma_2^+,\\
	\xi_2 &\text{ on }\sigma_2^-;
	\end{cases}$$
	here $\sigma_i^{\pm}$ are two copies of the cone $\sigma_i$ and $(\xi_1,\xi_2)$ are affine coordinates on $N_{\bb{R}}\cong\bb{R}^2$.
	Our local models are simply given by varying the coefficients of the function $\varphi_{2,1}$ as shown in Figure \ref{fig:L}; here $m,n$ are integers with $m\neq n$ and we call the resulting function $\varphi_{m,n}$.
	We say that a tropical Lagrangian multi-section is \emph{of class $\mathscr{S}$} (denoted as $\bb{L} \in \mathscr{S}$) if the multi-valued piecewise linear function $\varphi'$ is locally modeled by $\varphi_{m,n}$ for some $m,n$ around each ramification point of $\pi$; if the same $m,n$ are used at each ramification point, we obtain a special subclass called $\msc{S}_{m,n}\subset\msc{S}$.
	\begin{figure}[ht]
		\includegraphics[width=120mm]{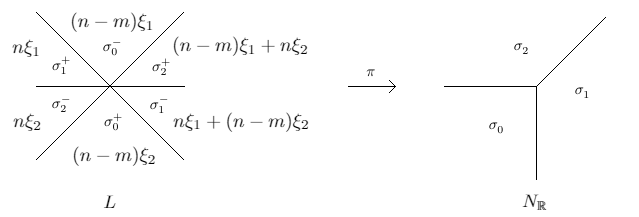}
		\caption{The tropical Lagrangian multi-section $\bb{L}$}
		\label{fig:L}
	\end{figure}
	
	In Section \ref{sec:E_0}, we construct a locally free sheaf $\cu{E}_0(\bb{L},{\bf{k}}_s)$ over the singular variety $X_0(B,\msc{P},s)$ from a tropical Lagrangian multi-section $\bb{L}$ of class $\mathscr{S}$. This is already nontrivial because there are various obstructions to the gluing process. To see that, we fix a vertex $v\in B$ which corresponds to a maximal toric stratum $X_v$. The function $\varphi'$ defines an toric line bundle on the affine chart $U_i:=\text{Spec}(\bb{C}[K_v(\sigma_i)^{\vee}\cap M])\subset X_v$ for each $\sigma_i\in\msc{P}_{max}$ which contains $v$. Taking direct sum produces a rank $r$ toric vector bundle over $U_i$. If the vertex $v\in B$ is not a branch point of $\pi: L \to B$, these local pieces glue to give a rank $r$ bundle over $X_v$ which splits. But when $v$ is a branch point of $\pi$, there are two line bundles $\cu{L}_i^+,\cu{L}_i^-$ over $U_i$ which \emph{cannot} be glued due to nontrivial monodromy around the ramification point $v' \in \pi^{-1}(v)$. In such a case, we follow \cite{Suen_TP2} (which was in turn motivated by Fukaya's proposal for reconstructing bundles in \cite{Fukaya_asymptotic_analysis}) and try to glue $\cu{L}_i^+\oplus\cu{L}_i^-$'s equivariantly to obtain a set of naive transition functions:
	$$\tau^{sf}_{10}:=\begin{pmatrix}
	\frac{a_0}{(w_0^1)^m} & 0
	\\0 & \frac{b_0}{(w_0^1)^n}
	\end{pmatrix},
	\tau^{sf}_{21}:=\begin{pmatrix}
	\frac{b_1}{(w_1^2)^n} & 0
	\\0 & \frac{a_1}{(w_1^2)^m}
	\end{pmatrix},
	\tau^{sf}_{02}:=\begin{pmatrix}
	0 & \frac{b_2}{(w_2^0)^n}
	\\\frac{a_2}{(w_2^0)^m} & 0
	\end{pmatrix}.$$
	The problem is that these do \emph{not} satisfy the cocycle condition. Thus we have to modify $\tau_{ij}^{sf}$ by multiplication by an invertible factor $\Theta_{ij}$ (the wall-crossing factor), namely, $\tau_{ij}:=\tau_{ij}^{sf}\Theta_{ij}$. We then obtain the following
	\begin{proposition}[=Proposition \ref{prop:monodromy_free}]
		If we impose the condition that $\prod_ia_ib_i=-1$, then
		$$\tau_{02}\tau_{21}\tau_{10}=I.$$
		Moreover, the equivariant structure defined by \eqref{eqn:sf_equ_str} can be extended.
	\end{proposition}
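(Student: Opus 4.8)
The plan is to prove the identity $\tau_{02}\tau_{21}\tau_{10} = I$ by direct matrix computation, after first determining the wall-crossing factors $\Theta_{ij}$ that correct the failure of the cocycle condition for the semi-flat transition functions $\tau_{ij}^{sf}$. First I would compute the ``defect'' cocycle $c := \tau_{02}^{sf}\tau_{21}^{sf}\tau_{10}^{sf}$. Since each $\tau_{ij}^{sf}$ is either diagonal or anti-diagonal, the product of the three is again a matrix of one of these two shapes (two anti-diagonal factors compose to diagonal, times one more gives anti-diagonal, or some such parity count — one must track this carefully), and its entries are monomials in the local toric coordinates $w_i^j$ on the overlaps times the scalars $a_i, b_i$. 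Using the relations among the $w_i^j$ coming from the gluing of the charts $U_i$ (the standard toric chart-transition relations, e.g. $w_0^1 w_1^2 w_2^0$ equals a unit, or is $1$, depending on the normalization in Section~\ref{sec:E_0}), the monomial part should collapse, leaving $c$ equal to a constant matrix whose entries involve $\prod_i a_i$ and $\prod_i b_i$. Imposing $\prod_i a_i b_i = -1$ is exactly what is needed to make $c = -I$ or to make $c$ a matrix that the wall-crossing factors can absorb.

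Next I would exhibit explicit $\Theta_{ij}$ — invertible matrices over the relevant rings, presumably of the form $I$ plus a strictly upper- (or lower-) triangular nilpotent correction supported along the wall, as in \cite{KS_scattering, Suen_TP2} — and check that $\tau_{ij} := \tau_{ij}^{sf}\Theta_{ij}$ satisfies $\tau_{02}\tau_{21}\tau_{10} = I$. Concretely, the cocycle condition becomes $\tau_{02}^{sf}\Theta_{02}\,\tau_{21}^{sf}\Theta_{21}\,\tau_{10}^{sf}\Theta_{10} = I$; conjugating the $\Theta$'s past the semi-flat factors (which permute the two line-bundle summands and rescale by monomials) turns this into a relation purely among the $\Theta$'s and the constant defect $c$, and one solves for the $\Theta_{ij}$ so that the telescoping product equals $c^{-1}$. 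This is the standard scattering-type bookkeeping; the sign $-1$ from $\prod_i a_i b_i = -1$ enters because the monodromy around $v'$ is an order-two permutation, so the ``square root'' of the identity one is really extracting is $-I$ rather than $I$.

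Finally, for the second assertion — that the equivariant structure defined by \eqref{eqn:sf_equ_str} extends — I would check that the torus action on each $\cu{L}_i^+ \oplus \cu{L}_i^-$ given by \eqref{eqn:sf_equ_str} is compatible with the corrected transition functions $\tau_{ij}$, i.e.\ that $\tau_{ij}$ intertwines the equivariant structures on $U_j$ and $U_i$ up to the character twist dictated by $\varphi_{m,n}$. Since $\tau_{ij}^{sf}$ is equivariant by construction (it is built from the toric data of $\varphi'$), it suffices to verify that the wall-crossing factor $\Theta_{ij}$ is equivariant for the induced action, which reduces to checking that each monomial entry of $\Theta_{ij}$ has the correct weight — a weight computation forced by the exponents $m, n$ appearing in $\varphi_{m,n}$.

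The main obstacle I expect is the second step: finding the wall-crossing factors $\Theta_{ij}$ in closed form and verifying the cocycle identity. The semi-flat part is a routine monomial bookkeeping, and the equivariance check at the end is a weight count; but producing the $\Theta_{ij}$ explicitly requires understanding precisely how the wall emanating from the ramification point $v'$ interacts with the three maximal cones around $v$, and checking that the resulting factors are genuinely invertible over the local rings (not merely formal power series) and glue consistently. The condition $\prod_i a_i b_i = -1$ is presumably pinned down exactly here, as the unique normalization under which the constant defect $c$ is absorbable by $\Theta$'s of the allowed form.
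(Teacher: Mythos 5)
There is a genuine gap, and it begins with your first step. The semi-flat defect $c:=\tau^{sf}_{02}\tau^{sf}_{21}\tau^{sf}_{10}$ does \emph{not} collapse to a constant matrix: using $w_0^1w_1^2w_2^0=1$ one finds that $c$ is the anti-diagonal matrix with entries $a_1b_0b_2\,(w_1^2)^{n-m}$ and $a_0b_1a_2\,(w_1^2)^{m-n}$, which are non-constant precisely because $m\neq n$. (If $c$ were constant, the whole wall-crossing mechanism would be unnecessary --- a constant change of frames would already repair the cocycle.) Accordingly, the role of the hypothesis $\prod_i a_ib_i=-1$ is not to turn a constant defect into $\pm I$; it is the determinant constraint: the correcting factors $\Theta_{ij}$ are unipotent, so $\det(\tau_{02}\tau_{21}\tau_{10})=\det c=-\prod_i a_ib_i$, which must equal $1$. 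Your plan to ``solve for the $\Theta_{ij}$ so that the telescoping product equals $c^{-1}$'' therefore has to cancel genuinely non-constant monomial entries of weight $m-n$, and whether this can be done with factors of the allowed form is exactly the content of the proposition --- which your write-up explicitly defers (``the main obstacle I expect is the second step'') rather than carries out. The heuristic about extracting a ``square root'' $-I$ of the order-two monodromy is not a substitute for this computation.

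For comparison, the paper does not solve for the $\Theta_{ij}$ abstractly: they are written down explicitly as part of the construction preceding the proposition (e.g.\ for $m>n$, $\Theta_{10}=I$ plus the nilpotent matrix with entry $-a_0b_1a_2\bigl(w_0^2/w_0^1\bigr)^{m-n}$, and similarly for $\Theta_{21},\Theta_{02}$, with mirror-image formulas when $m<n$), and the proof of Proposition \ref{prop:monodromy_free} is then the straightforward matrix verification that $\tau_{02}\tau_{21}\tau_{10}=I$ once $\prod_i a_ib_i=-1$, together with the weight check that each $\Theta_{ij}$ is compatible with the torus action \eqref{eqn:sf_equ_str} (its single monomial entry, e.g.\ $(w_0^2/w_0^1)^{m-n}$ of weight $(\lambda_2/\lambda_1)^{m-n}$, matches the ratio of the fiber weights of the two summands), so the equivariant structure extends. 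Your third step (equivariance as a weight count) is consistent with this, but without the explicit $\Theta_{ij}$, without the cocycle computation, and with the incorrect claim that the defect is constant, the proposal does not yet establish the statement.
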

	From this we obtain a rank 2 toric vector bundle $E_{m,n}$ on $\bb{P}^2$, and we denote the corresponding rank 2 bundle on the toric component $ X_v\cong\bb{P}^2$ by $\cu{E}(v')$. Taking direct sum with the $r-2$ line bundles $\cu{L}(v^{(\alpha)})$, $\alpha=1,\dots,r-2$, we obtain a rank $r$ bundle $\cu{E}(v)$ on $ X_v$ even when $v$ is a branch point of $\pi$. To glue the bundles $\{\cu{E}(v)\}$ together, a key observation is that the factors $\Theta_{ij}$ act \emph{trivially} on the boundary divisors.\footnote{This is because each irreducible component of the boundary divisor is isomorphic to $\bb{P}^1$ and hence any bundle splits into a direct sum of two line bundles; one should not expect such a nice property for $\dim(B)\geq 3$.} However, there is further obstruction to gluing these bundles together. This obstruction, which is given by a cohomology class $o_{\bb{L}}([s])\in H^2(L,\bb{C}^{\times})$, is analogous to that in \cite[Theorem 2.34]{GS1} that arises from gluing of the projective scheme $X_0(B,\msc{P},s)$.
	
	\begin{theorem}[=Theorem \ref{thm:existence}]\label{thm:existence_intro}
		If $o_{\bb{L}}([s])=1$, then there exists a rank $r$ locally free sheaf $\cu{E}_0(\bb{L},{\bf{k}}_s)$ over $X_0(B,\msc{P},s)$.
	\end{theorem}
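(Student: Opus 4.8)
The plan is to realize $\cu{E}_0(\bb{L},{\bf{k}}_s)$ by gluing the rank $r$ bundles $\cu{E}(v)$ on the toric components $X_v$ of $X_0(B,\msc{P},s)$ along their common boundary divisors, and to identify $o_{\bb{L}}([s])$ as the obstruction to doing so. First I would fix the combinatorics. Since $\dim B=2$, writing $X_0(B,\msc{P},s)=\bigcup_v X_v$, the intersection $X_v\cap X_w$ is non-empty precisely when $v$ and $w$ lie in a common cell; when $\omega=\ol{vw}$ is an edge it is the common toric divisor $X_\omega\cong\bb{P}^1$, and the triple intersections $X_u\cap X_v\cap X_w$ are (unions of) the zero-dimensional strata $X_\tau$. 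On the covering side the direct summands of $\cu{E}(v)$ — the line bundles $\cu{L}(v^{(\alpha)})$, together with the rank two block $\cu{E}(v')$ when $v$ is a branch point of $\pi$ — are indexed by the vertices of $L$ over $v$, the gluings will be indexed by edges of $\msc{P}_\pi$ and their failure by $2$-cells; this is why the eventual obstruction will live in $H^2(L,\bb{C}^\times)$ rather than just in $H^2(B,\bb{C}^\times)$.

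I would then construct, for each edge $\omega=\ol{vw}$, an isomorphism $\phi_{vw}\colon\cu{E}(v)|_{X_\omega}\xrightarrow{\ \sim\ }\cu{E}(w)|_{X_\omega}$. The crucial input, recorded in Section \ref{sec:E_0}, is that the wall-crossing factors $\Theta_{ij}$ act trivially on the boundary divisors; consequently both $\cu{E}(v)|_{X_\omega}$ and $\cu{E}(w)|_{X_\omega}$ split, as bundles on $\bb{P}^1$, into the same direct sum of line bundles — the summand over an edge $\omega'$ of $\msc{P}_\pi$ above $\omega$ being the line bundle determined by the bending of the multivalued piecewise linear function $\varphi'$ along $\omega'$, twisted by the data ${\bf{k}}_s$. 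Matching these equivariantly summand-by-summand produces $\phi_{vw}$, unique up to post-composition with an automorphism of $\cu{E}(w)|_{X_\omega}$, and by equivariant rigidity of toric line bundles this residual freedom is by a scalar on each summand.

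Next I would analyse the triple overlaps: for a $2$-cell $\tau$ with vertices $u,v,w$ the composite $\phi_{wu}\circ\phi_{vw}\circ\phi_{uv}$ is an automorphism of $\cu{E}(u)|_{X_\tau}$, and the central claim is that for the choices above it is a nonzero scalar $o^\tau\in\bb{C}^\times$. Away from the branch points this is elementary: the relevant summands are line bundles which, restricted to the point $X_\tau$, are acted on only by scalars, and the product of the three transition scalars is a single number. At a branch point one must instead track, along the three $\bb{P}^1$'s meeting at $X_\tau$, how the explicit transition matrices $\tau^{sf}_{ij}\Theta_{ij}$ of the rank two bundle $E_{m,n}$ restrict; here Proposition \ref{prop:monodromy_free} — in particular the normalization $\prod_ia_ib_i=-1$, which yields $\tau_{02}\tau_{21}\tau_{10}=I$, together with the extension of the equivariant structure — is what guarantees both that $\cu{E}(v')$ is a genuine bundle on $X_v$ and that the residual gluing freedom around $X_\tau$ is again exactly $\bb{C}^\times$. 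Collecting the $o^\tau$ over all $2$-cells yields a cellular $2$-cocycle on $(L,\msc{P}_\pi)$, representing by definition the class $o_{\bb{L}}([s])\in H^2(L,\bb{C}^\times)$; this is the bundle analogue of the gluing obstruction of \cite[Theorem 2.34]{GS1} for $X_0$ itself.

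Finally, if $o_{\bb{L}}([s])=1$ then the cocycle $(o^\tau)$ is a coboundary, so the isomorphisms $\phi_{vw}$ can be modified by post-composing with suitable scalar automorphisms $\lambda_\omega\cdot\mathrm{id}$, $\lambda_\omega\in\bb{C}^\times$, indexed by the edges $\omega$, so that $\phi_{wu}\circ\phi_{vw}\circ\phi_{uv}=\mathrm{id}$ on every $X_\tau$. The collection $(\cu{E}(v),\phi_{vw})$ is then genuine gluing data for the cover $\{X_v\}$ of $X_0(B,\msc{P},s)$, hence descends to a coherent sheaf $\cu{E}_0(\bb{L},{\bf{k}}_s)$ with $\cu{E}_0(\bb{L},{\bf{k}}_s)|_{X_v}\cong\cu{E}(v)$; local freeness of rank $r$ can be checked locally and is inherited from the $\cu{E}(v)$. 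I expect the genuine obstacle to be the scalarity assertion of the third paragraph at the branch points: this is exactly where the explicit local models $\varphi_{m,n}$ (i.e.\ the class $\msc{S}$) and Proposition \ref{prop:monodromy_free} are indispensable, and where one must check that the discrepancies assemble into a class on $L$ and not merely on $B$.
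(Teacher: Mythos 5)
Your proposal follows essentially the same route as the paper: glue the bundles $\cu{E}(v)$ using the triviality of the wall-crossing factors $\Theta_{ij}$ on the boundary divisors (with Proposition \ref{prop:monodromy_free} supplying the rank 2 piece at branch vertices), read off the failure of the cocycle condition as a $\bb{C}^{\times}$-valued 2-cocycle living on $L$ rather than $B$, and trivialize it by constants ${\bf{k}}_s$ when $o_{\bb{L}}([s])=1$ before passing to the (co)limit over the strata. The only caveat is that the triple-overlap discrepancy is not a single scalar $o^{\tau}$ but a diagonal automorphism acting by a possibly different scalar on each summand, i.e.\ one scalar for each lift of the cell to $L$ (the paper records these as $s_{\tau_1'\tau_2'\tau_3'}$ indexed by flags of lifted cells), which is precisely why the obstruction class lies in $H^2$ of $L$ — a point you already flag as the place requiring care.
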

	
	We can now proceed to study smoothability of the pair $(X_0(B,\msc{P},s),\cu{E}_0(\bb{L},{\bf{k}}_s))$ (see Section \ref{sec:simple_smooth}). We will assume that the polyhedral decomposition $\msc{P}$ is positive and simple, as in \cite{GS1,GS2,GS11}, as well as \emph{elementary}, meaning that every cell in $\msc{P}$ is an elementary simplex.\footnote{In dimension 2, every polyhedral decomposition can actually be subdivided into elementary simplices (or equivalently, standard simplices).} From the Gross-Siebert program, we already know that $ X_0(B,\msc{P},s)$ can be smoothed out to give a formal polarized family $p:\cu{X}\to \Delta := \text{Spec}(\bb{C}[[t]])$ of Calabi-Yau surfaces.
	We will focus on the sheaves $\cu{E}_0(\bb{L},{\bf{k}}_s)$ which correspond to the tropical Lagrangian multi-sections $\bb{L}\in \mathscr{S}_{n+1,n}$ (the $m \geq n+2$ cases are actually much easier).

	To prove smoothability of the pair $( X_0(B,\msc{P},s),\cu{E}_0(\bb{L},{\bf{k}}_s))$ for $\bb{L}\in \mathscr{S}_{n+1,n}$, our strategy is to apply a result in a previous work of the first and second authors \cite[Corollary 4.6]{CM_pair}, for which we need the condition that $H^2(X_0(B,\msc{P},s),\cu{E}nd_0(\cu{E}_0(\bb{L},{\bf{k}}_s))) = 0$. In general, it is not easy to deal with higher cohomologies. Exploiting the fact that $X_0(B,\msc{P},s)$ is a Calabi-Yau surface and Serre duality, we are reduced to showing that $H^0(X_0(B,\msc{P},s),\cu{E}nd_0(\cu{E}_0(\bb{L},{\bf{k}}_s))) = 0$, or equivalently, that the locally free sheaf $\cu{E}_0(\bb{L},{\bf{k}}_s)$ is \emph{simple}.
	
	In the rank $2$ case, we are able to find a clean and checkable combinatorial condition on the tropical Lagrangian multi-section $\bb{L}$ which is equivalent to simplicity of $\cu{E}_0(\bb{L},{\bf{k}}_s)$ (see Section \ref{sec:rk2}).
	In order to state our main result, we consider the embedded graph $G(\msc{P}) \subset B$ given by the union of all 1-cells in the polyhedral decomposition $\msc{P}$, and then let 
	$$G_0(\bb{L}) \subset G(\msc{P})$$
	be the subgraph obtained by removing all the branch vertices and the adjacent edges.
	We call a 1-cycle $\gamma\subset G_0(\bb{L})$ which bounds a 2-cell in $\msc{P}$ a \emph{minimal cycle} (see Definition \ref{def:minimal cycle}).
	The key observation is that a nontrivial endomorphism of $\cu{E}_0(\bb{L},{\bf{k}}_s)$ gives rise to a minimal cycle in $G_0(\bb{L})$, and vice versa:
	\begin{theorem}[=Theorem \ref{thm:simple}]\label{thm:main intro}
		Let $\bb{L}\in \mathscr{S}_{n+1,n}$. Then the locally free sheaf $\cu{E}_0(\bb{L},{\bf{k}}_s)$ is simple if and only if $G_0(\bb{L})$ has no minimal cycles.
	\end{theorem}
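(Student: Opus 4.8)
Throughout write $\cu{E}_0:=\cu{E}_0(\bb{L},{\bf k}_s)$ and $X_0:=X_0(B,\msc{P},s)$. Since $\operatorname{rk}\cu{E}_0=2$ we have $\cu{E}nd(\cu{E}_0)=\cu{O}_{X_0}\oplus\cu{E}nd_0(\cu{E}_0)$, so simplicity amounts to $H^0(X_0,\cu{E}nd_0(\cu{E}_0))=0$, and we must show this holds exactly when $G_0(\bb{L})$ has no minimal cycle. If $\pi$ has no branch vertex then $\cu{E}_0$ splits and $G_0(\bb{L})=G(\msc{P})$, which has minimal cycles, so the statement holds; assume from now on that there is a branch vertex. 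The plan is to compute $H^0(X_0,\cu{E}nd(\cu{E}_0))$ with the Mayer--Vietoris complex for the cover of $X_0$ by its irreducible toric components $\{X_v\}$: a global endomorphism is a family $(\phi_v)$, $\phi_v\in\operatorname{End}(\cu{E}(v))$, agreeing on the double overlaps $D_e=X_v\cap X_w\cong\bb{P}^1$. On a branch component $X_v\cong\bb{P}^2$ we have $\cu{E}(v)=\cu{E}(v')=E_{n+1,n}$, which is simple (a twist of $T_{\bb{P}^2}$, hence stable), so $\phi_v=\lambda_v\cdot\operatorname{id}$; on a non-branch component $\cu{E}(v)=\cu{L}_1(v)\oplus\cu{L}_2(v)$, the summands being the line bundles carried by the two sheets of $\pi$ over $v$, and $\phi_v$ has diagonal entries in $H^0(X_v,\cu{O})=\bb{C}$ and off-diagonal entries in $H^0(X_v,\cu{M}_v)$, $H^0(X_v,\cu{M}_v^{-1})$, where $\cu{M}_v:=\cu{L}_1(v)\otimes\cu{L}_2(v)^{-1}$ is the toric line bundle cut out on $\Sigma_v$ by $\varphi_1'-\varphi_2'$.

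The next step is a rigidity argument. Because $\det\phi$ and $\operatorname{tr}\phi$ are global functions, hence constants which evaluated on a branch component equal $\lambda^2$ and $2\lambda$, Cayley--Hamilton gives $(\phi-\lambda\operatorname{id})^2=0$; set $\psi:=\phi-\lambda\operatorname{id}$, a traceless section of $\cu{E}nd_0(\cu{E}_0)$ with $\psi^2=0$. On each branch component $\psi_v=0$, and matching on a branch-adjacent divisor $D_e$ forces the (constant) diagonal of $\psi$ on the neighbouring non-branch component to vanish; by connectedness of $X_0$ this propagates to all non-branch components, so on each of them $\psi_v$ has zero diagonal, off-diagonal entries $b_v\in H^0(X_v,\cu{M}_v)$ and $c_v\in H^0(X_v,\cu{M}_v^{-1})$ with product $b_vc_v\in H^0(X_v,\cu{O})$ forced to be $0$ by $\psi^2=0$ (so $b_v=0$ or $c_v=0$), and $b_v=c_v=0$ along every branch-adjacent divisor. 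Hence $\cu{E}_0$ is simple iff every such $\psi$ is zero.

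The heart of the proof is combinatorial. Working from the local model $\varphi_{n+1,n}$ one computes the $\cu{M}_v$ explicitly and proves the key lemma: (i) no $\cu{M}_v$ on a non-branch component admits a section vanishing along the entire toric boundary of $X_v$; (ii) the $\cu{M}_v$ restrict to the curves $D_e$ with degrees in $\{0,1\}$, and any section supported on at least one toric divisor is, after imposing the matching and branch-vanishing constraints, a single monomial dual to a $2$-dimensional cone $\sigma_v\in\Sigma_v$, nonzero precisely along the two toric divisors spanning $\sigma_v$. Granting this, a nonzero $\psi$ is nonzero on some $D_e$; at each non-branch $v$ with $\psi_v\ne0$ it determines a $2$-cone $\sigma_v$; and comparing restrictions of consecutive monomials along the $D_e$'s where $\psi\ne0$ forces all these $2$-cones to be one and the same $2$-cell $\sigma\in\msc{P}$, whose boundary $\partial\sigma$ lies in $G_0(\bb{L})$ (it meets no branch vertex, as $\psi$ vanishes there), i.e.\ is a minimal cycle. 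This establishes the implication ``simple $\Rightarrow$ no minimal cycle'' in contrapositive form. For the converse, given a minimal cycle $\gamma=\partial\sigma$ with (non-branch) vertices $v_1,\dots,v_k$, observe that $\sigma$ is a disk in $B$ free of branch points, so $\pi$ is trivial over it; take on each $X_{v_i}$ the monomial section of $\cu{M}_{v_i}$ dual to $\sigma\in\Sigma_{v_i}$ as the $b$-entry of $\psi_{v_i}$, all $c$-entries zero, and $\psi\equiv0$ on every other component. A cell-by-cell check of the overlaps — using that the wall-crossing factors $\Theta_{ij}$ act trivially on boundary divisors, and absorbing the gluing data ${\bf k}_s$ — shows these data glue to a global $\psi$ with $\psi^2=0$ and $\psi\ne0$, so $\cu{E}_0$ is not simple.

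I expect the main obstacle to be the key lemma of the previous paragraph: computing the bundles $\cu{M}_v$ correctly in the presence of the gluing data $s$ and of the monodromy of $\pi$ around the ramification points, and proving rigidity statement (ii) — that a nonzero, compatible, branch-vanishing section can only be a cyclically matched string of monomials running once around the boundary of a single $2$-cell. Note that Serre duality and the Calabi--Yau hypothesis play no role here; they were used only to reduce $H^2=0$ to simplicity.
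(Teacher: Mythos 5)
Your overall architecture (component-by-component analysis, killing the diagonal via simplicity of $E_{n+1,n}$ at branch vertices and propagating by connectedness, then reducing to a nilpotent off-diagonal datum) matches the paper's proof, and that part is fine. The problems occur at exactly the two points you dismiss as routine. In the converse direction (``minimal cycle $\Rightarrow$ not simple'') there is a genuine gap: your sections $s_{v}$ (the monomials dual to $\sigma$, i.e.\ the unique-up-to-scale sections of $\cu{O}_{\bb{P}^2}(1)$ vanishing on the half-edge divisor) do \emph{not} in general agree on the double curves. On $X_\tau\cong\bb{P}^1$ the two pullbacks $F_s(e)^*s_v$ and $F_s(e')^*s_{v'}$ are sections of $\cu{O}(1)$ with the same zero, hence only proportional, with a constant $\lambda_{vv'}\in\bb{C}^{\times}$ depending on the gluing data $s$, the constants ${\bf{k}}_s$ and the chosen frames. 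To glue you must rescale $s_v\mapsto c_vs_v$, and consistent constants $c_v$ exist iff the product of the $\lambda_{vv'}$ around $\gamma$ --- the monodromy of the rank-one local system $(\{U_v\},\{\bb{C}\langle s_v\rangle\},\{\lambda_{vv'}\})$ on $\gamma\cong S^1$ --- equals $1$. This is not a ``cell-by-cell check'': it is the crux of the converse and precisely where minimality enters. The paper proves triviality of this local system by extending it over the bounding $2$-cell $\sigma$ (comparing all transition constants through the common stratum $X_\sigma$, using $F_s(h)\circ F_s(e)=F_s(f)$) and then invoking contractibility of $\sigma$; for a cycle in $G_0(\bb{L})$ that does not bound a $2$-cell the same construction can fail for exactly this reason. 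Your phrase ``absorbing the gluing data ${\bf{k}}_s$'' hides this step, and without it the extension-by-zero is not well defined.

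In the forward direction, the ``key lemma'' you defer is misstated: a compatible, branch-vanishing section need not be a string of single monomials, and the $2$-cones it singles out need not all coincide (a nonzero traceless section can be supported around several disjoint $2$-cells simultaneously). Fortunately nothing that strong is needed. Since the off-diagonal entry $a_v$ is a nonzero section of $\cu{O}_{\bb{P}^2}(1)$, it is nonzero at some torus fixed point $p_\sigma$, where $\sigma\in\msc{P}$ is a maximal cell containing $v$; continuity of the global section at $p_\sigma$, which lies in $X_{v'}$ for every vertex $v'$ of $\sigma$, forces $\Pi^0_{v'}(A)\neq 0$ for all such $v'$, hence none of them is a branch vertex and $\partial\sigma\subset G_0(\bb{L})$ is already the required minimal cycle. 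This is the paper's argument; replacing your lemma (ii) by this observation both repairs the overclaim and shortens the proof. (Your preliminary reductions --- Cayley--Hamilton to get $\psi^2=0$, and the no-branch-vertex case --- are fine in substance, though in the latter case non-simplicity should be deduced from the converse construction rather than from an asserted global splitting of $\cu{E}_0$.)
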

	
	Because of this result, we say that a tropical Lagrangian multi-section $\bb{L}\in \mathscr{S}_{n+1,n}$ is \emph{simple} if $G_0(\bb{L})$ does not contain any minimal cycles (see Definition \ref{def:simple}).
	
	
	\begin{corollary}[=Corollary \ref{cor:smoothable}]\label{cor:corollary1_intro}
		If $\bb{L}\in\msc{S}_{n+1,n}$ is simple, then the pair $( X_0(B,\msc{P},s),\cu{E}_0(\bb{L},{\bf{k}}_s))$ is smoothable.
	\end{corollary}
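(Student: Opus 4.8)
The plan is to obtain Corollary~\ref{cor:corollary1_intro} as a consequence of the deformation-theoretic criterion \cite[Corollary 4.6]{CM_pair}: since $(B,\msc{P})$ is assumed positive and simple, the Gross--Siebert program already produces a formal polarized smoothing $p:\cu{X}\to\Delta$ of $X_0:=X_0(B,\msc{P},s)$, and \cite[Corollary 4.6]{CM_pair} then guarantees that $\cu{E}_0(\bb{L},{\bf{k}}_s)$ lifts to a locally free sheaf on $\cu{X}$ --- so that the pair $(X_0,\cu{E}_0(\bb{L},{\bf{k}}_s))$ is smoothable --- as soon as the obstruction group $H^2\big(X_0,\cu{E}nd_0(\cu{E}_0(\bb{L},{\bf{k}}_s))\big)$ vanishes. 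Thus the whole proof reduces to establishing this vanishing, and I would do it in three moves: convert $H^2$ into $H^0$ by Serre duality, identify the resulting $H^0$-vanishing with simplicity of $\cu{E}_0(\bb{L},{\bf{k}}_s)$, and then invoke the combinatorial criterion of Theorem~\ref{thm:main intro}.

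For the first move I would use that $X_0$ is a reduced projective Gorenstein surface with trivial dualizing sheaf $\omega_{X_0}\cong\cu{O}_{X_0}$ --- in the elementary dimension-$2$ setting $X_0$ is a normal crossing union of copies of $\bb{P}^2$ glued along lines, and triviality of $\omega_{X_0}$ is built into the fan construction of a degenerate Calabi--Yau surface. Serre duality then gives $H^2(X_0,\cu{F})\cong H^0(X_0,\cu{F}^\vee)^*$ for any locally free $\cu{F}$. Applying this to $\cu{F}=\cu{E}nd_0(\cu{E}_0(\bb{L},{\bf{k}}_s))$ and using that the trace-free endomorphism sheaf is self-dual (the symmetric pairing $(\phi,\psi)\mapsto\mathrm{tr}(\phi\psi)$ on $\cu{E}nd(\cu{E}_0(\bb{L},{\bf{k}}_s))$ is nondegenerate and restricts nondegenerately to the trace-free summand, since we are in characteristic $0$ with the rank invertible), I get $H^2\big(X_0,\cu{E}nd_0(\cu{E}_0(\bb{L},{\bf{k}}_s))\big)\cong H^0\big(X_0,\cu{E}nd_0(\cu{E}_0(\bb{L},{\bf{k}}_s))\big)^*$. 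Hence the obstruction vanishes precisely when $H^0\big(X_0,\cu{E}nd_0(\cu{E}_0(\bb{L},{\bf{k}}_s))\big)=0$, i.e. precisely when $\cu{E}_0(\bb{L},{\bf{k}}_s)$ is simple.

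For the last step: by hypothesis $\bb{L}\in\msc{S}_{n+1,n}$ is simple, which by Definition~\ref{def:simple} means that $G_0(\bb{L})$ contains no minimal cycle; Theorem~\ref{thm:main intro} then tells us exactly that $\cu{E}_0(\bb{L},{\bf{k}}_s)$ is simple, so the $H^0$ above vanishes, hence so does the obstruction space, and \cite[Corollary 4.6]{CM_pair} delivers the smoothing of the pair. The bulk of the genuine work sits not in this corollary but in Theorem~\ref{thm:main intro}; within the corollary itself the delicate point is to make sure the hypotheses of \cite[Corollary 4.6]{CM_pair} are met by our $X_0$ and $\cu{E}_0(\bb{L},{\bf{k}}_s)$ together with their ambient log structure and polarization, and that the Serre-duality / Gorenstein-triviality input above is valid for the singular scheme $X_0$. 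I would also note that, because the deformations of $\cu{E}_0(\bb{L},{\bf{k}}_s)$ over the fixed Gross--Siebert family are then unobstructed, we are free to use any smoothing of $X_0$ produced by that program, so no ad hoc compatibility between the two smoothing constructions has to be arranged.
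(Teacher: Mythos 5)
Your proposal is correct and follows essentially the same route as the paper: Serre duality on the Gorenstein Calabi-Yau surface $X_0$ (with trivial dualizing sheaf and self-dual $\cu{E}nd_0$) reduces the vanishing of $H^2(X_0,\cu{E}nd_0(\cu{E}_0(\bb{L},{\bf{k}}_s)))$ to simplicity of $\cu{E}_0(\bb{L},{\bf{k}}_s)$, which Theorem \ref{thm:simple} supplies from simplicity of $\bb{L}$, and then \cite[Corollary 4.6]{CM_pair} yields the smoothing of the pair. This is exactly the paper's argument (Corollary \ref{cor:h2=0} followed by Corollary \ref{cor:smoothable}), with your explicit trace-pairing justification of self-duality being a detail the paper leaves implicit.
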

	
	
	The higher rank case turns out to be much more subtle, and we can only obtain some partial results (see Section \ref{sec:higher rank}).
	For $r \geq 2$, we consider a tropical Lagrangian $r$-fold multi-section $\bb{L}$ over $(B,\msc{P})$ which is totally ramified, i.e. locally modeled by the $r$-fold map $z\mapsto z^r$ on $\bb{C}$ around each branch point $v \in S$ (here $S := \pi(S') \subset B$ denotes the branch locus of $\pi: L \to B$), and satisfies certain slope conditions (see Definition \ref{def:class_C}). This defines another special class of tropical Lagrangian multi-sections, which we call $\cu{C}$. The major problem is that we do not know whether there exist toric vector bundles over the toric component $X_v$ whose cone complexes give exactly these local models (unlike the rank 2 case where we have the bundles $E_{m,n}$). So we make the following:
	\begin{assumption}\label{assumption}
		For each branch point $v \in S$, there exists a rank $r$ toric vector bundle $\cu{E}(v)$ over $X_v$ whose cone complex satisfies the slope conditions in Definition \ref{def:class_C}
	\end{assumption}
	The bundle $\cu{E}(v)$ associated to $v\in S$, if exists, is guaranteed to be simple.
	Also, an analogue of Theorem \ref{thm:existence_intro} holds, namely, the collection $\{\cu{E}(v)\}_{v \in \msc{P}(0)}$ can be glued to produce a locally free sheaf $\cu{E}_0(\bb{L},{\bf{k}}_s)$ over $X_0(B,\msc{P},s)$ when the obstruction vanishes. This puts us in a situation analogous to the rank 2 case.
	
	
	To understand simplicity and smoothability of $\cu{E}_0(\bb{L},{\bf{k}}_s)$, we define a graph analogous to $G_0(\bb{L})$; however, unlike the rank 2 case, such a graph lives in the fiber product
	$$P(\bb{L}):=L\times_BL,$$
	which should be viewed as a certain (fiberwise) path space of $\bb{L}$, instead of $B$.
	There is a natural polyhedral decomposition on $P(\bb{L})$, given by $\til{\msc{P}}:=\msc{P}'\times_{\msc{P}}\msc{P}'$.
	The union of all 1-cells in $\til{\msc{P}}$ gives a graph $\til{G}(\bb{L}) \subset P(\bb{L})$, and we can define the subgraph $\til{G}_0(\bb{L}) \subset \til{G}(\bb{L})$ by suitable removing some edges (see Definition \ref{def:subgraph higher rank} for the detailed definition). We remark that, when $r=2$, the graph $\til{G}_0(\bb{L})$ can be identified with $G_0(\bb{L})$ via the projection $\pi:P(\bb{L})\to B$.
	Then we have the following weakened analogue of Theorem \ref{thm:main intro}:
	\begin{theorem}[=Theorem \ref{thm:general_simple}]
		Let $\bb{L}\in\cu{C}$ such that Assumption \ref{assumption} holds.
		Suppose that $\til{G}_0(\bb{L})$ has no minimal cycles (1-cycle which bound a 2-cell in $\til{\msc{P}}$), 
		and that the line bundle $\cu{L}(\til{v})$ associated to any $\til{v}\in\til{G}_0(\bb{L})$ admits a section $s_{\til{v}}\in H^0(X_v,\cu{L}(\til{v}))$ such that $s_{\til{v}}(p_{\til{v}})\neq 0$ for some torus fixed point $p_{\til{v}}\in X_v$.
		Then $\cu{E}_0(\bb{L},{\bf{k}}_s)$ is simple and hence the pair $(X_0(B,\msc{P},s),\cu{E}_0(\bb{L},{\bf{k}}_s))$ is smoothable.
	\end{theorem}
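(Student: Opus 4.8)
The plan is to establish simplicity of $\cu{E}_0(\bb{L},{\bf{k}}_s)$ first, and then deduce smoothability of the pair exactly as announced in the introduction. For the latter implication, recall that $X_0(B,\msc{P},s)$ is a (Gorenstein) Calabi--Yau surface, so its dualizing sheaf is trivial, and that the trace-free part $\cu{E}nd_0(\cu{E}_0(\bb{L},{\bf{k}}_s))$ is locally free and self-dual under the trace pairing (which is fibrewise the nondegenerate Killing-type form on $\mf{sl}_r$). Serre duality then yields
$$H^2\big(X_0(B,\msc{P},s),\cu{E}nd_0(\cu{E}_0(\bb{L},{\bf{k}}_s))\big)\cong H^0\big(X_0(B,\msc{P},s),\cu{E}nd_0(\cu{E}_0(\bb{L},{\bf{k}}_s))\big)^{\vee},$$
so once we know that $\cu{E}_0(\bb{L},{\bf{k}}_s)$ is simple, i.e. $H^0(X_0(B,\msc{P},s),\cu{E}nd_0(\cu{E}_0(\bb{L},{\bf{k}}_s)))=0$, the vanishing $H^2(X_0(B,\msc{P},s),\cu{E}nd_0(\cu{E}_0(\bb{L},{\bf{k}}_s)))=0$ follows, and smoothability is a consequence of \cite[Corollary 4.6]{CM_pair}. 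Thus the entire content lies in the simplicity assertion.

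To prove simplicity, take $\phi\in H^0(X_0(B,\msc{P},s),\cu{E}nd(\cu{E}_0(\bb{L},{\bf{k}}_s)))$ and restrict it componentwise. Over a branch vertex $v\in S$ the bundle $\cu{E}(v)$ is simple (a consequence of Assumption \ref{assumption}, as noted above), so $\phi|_{X_v}=\lambda_v\cdot\mathrm{id}$ for some $\lambda_v\in\bb{C}$. Over a non-branch vertex $v$ the bundle $\cu{E}(v)$ splits as a direct sum of line bundles indexed by $\pi^{-1}(v)$, and $\phi|_{X_v}$ is recorded by a collection of sections $\phi_{\til{v}}\in H^0(X_v,\cu{L}(\til{v}))$, one for each vertex $\til{v}$ of $\til{\msc{P}}$ lying over $v$; the diagonal vertices $\til{v}$ (where $\cu{L}(\til{v})\cong\cu{O}_{X_v}$) carry the scalar part and the off-diagonal ones carry what must be shown to vanish. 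Since the wall-crossing factors act trivially on the double locus, the splittings — and hence the entries $\phi_{\til{v}}$ — restrict compatibly along each toric $1$-stratum $\cong\bb{P}^1$. Comparing entries across the $\bb{P}^1$-strata shared with branch components forces, on the one hand, the off-diagonal $\phi_{\til{v}}$ to vanish along precisely the edges of $\til{G}(\bb{L})$ deleted in passing to $\til{G}_0(\bb{L})$, and, on the other hand, all the scalars $\lambda_v$ together with all diagonal entries $\phi_{\til{v}}$ to be equal to a single $\lambda\in\bb{C}$ (using connectedness of $B$). Replacing $\phi$ by $\phi-\lambda\cdot\mathrm{id}$, it remains to show that the remaining off-diagonal data, which is supported over the vertices of $\til{G}_0(\bb{L})$, is identically zero.

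Suppose, for contradiction, that some off-diagonal $\phi_{\til{v}}\neq 0$. The compatibility of the entries along the $\bb{P}^1$-strata corresponding to edges of $\til{G}_0(\bb{L})$ propagates information about $\phi_{\til{v}}$ to the entries over adjacent vertices; the hypothesis that each $\cu{L}(\til{v})$, $\til{v}\in\til{G}_0(\bb{L})$, admits a section nonvanishing at some torus fixed point is what allows this propagation to be phrased in terms of the values of the $\phi_{\til{v}}$ at torus fixed points, turning the collection $\{\phi_{\til{v}}\}$ into (the data of) a nonzero flat section of a local system on the support subgraph of $\til{G}_0(\bb{L})$. A nonzero flat section can exist only if this subgraph contains a $1$-cycle $\gamma$ around which the accumulated monodromy is trivial; but triviality of this monodromy is equivalent to $\gamma$ bounding a $2$-cell of $\til{\msc{P}}$, i.e. to $\gamma$ being a minimal cycle — contradicting the hypothesis that $\til{G}_0(\bb{L})$ has none. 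Hence every off-diagonal $\phi_{\til{v}}$ vanishes, $\phi=\lambda\cdot\mathrm{id}$, so $\cu{E}_0(\bb{L},{\bf{k}}_s)$ is simple and, by the first paragraph, the pair $(X_0(B,\msc{P},s),\cu{E}_0(\bb{L},{\bf{k}}_s))$ is smoothable. The main obstacle is this last paragraph: in the rank $2$ case the restriction to each $\bb{P}^1$-stratum automatically diagonalizes everything into line bundles $\cu{O}_{\bb{P}^1}(d)$, so tracking a nonzero entry around $G_0(\bb{L})$ is essentially bookkeeping, whereas here one must control global sections of the $\cu{L}(\til{v})$ on the full toric surfaces $X_v$ and organize the evaluation-at-fixed-points so that the propagation across edges of $\til{G}_0(\bb{L})$ is genuinely consistent all the way around a cycle. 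Making precise how the section hypothesis enters here — and verifying that the monodromy obstruction around a non-minimal cycle is exactly what rules out a nonzero flat section — is the technical heart of the argument; the diagonal reduction in the preceding paragraphs, by contrast, is formal.
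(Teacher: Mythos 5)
Your reductions are fine and agree with the paper: smoothability follows from simplicity via Serre duality on the Gorenstein Calabi--Yau surface together with \cite[Corollary 4.6]{CM_pair}, and the diagonal part of a global endomorphism is killed by simplicity of $\cu{E}(v)$ at the branch vertices plus connectedness, so everything reduces to showing the off-diagonal components $a_{\til{v}}$, $\til{v}\in\til{G}_0(\bb{L})$, vanish. But the step you yourself flag as ``the technical heart'' is exactly where your proposed mechanism breaks down, and it is not the mechanism the paper uses. You argue that a nonzero off-diagonal component propagates to a nonzero flat section of a rank-one local system on the support subgraph of $\til{G}_0(\bb{L})$, and that such a section forces a cycle with trivial monodromy, which you then identify with a minimal cycle. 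This cannot work as stated: a rank-one local system on a subgraph with no cycles at all (a tree, or a single vertex) always admits nonzero flat sections, so the existence of a nonzero flat section yields no cycle and no contradiction. Moreover, the asserted equivalence ``monodromy around $\gamma$ trivial $\iff$ $\gamma$ bounds a $2$-cell of $\til{\msc{P}}$'' is unjustified; in the rank-$2$ Theorem \ref{thm:simple} the local-system/monodromy construction appears only in the \emph{converse} direction (building a nonzero endomorphism \emph{from} a given minimal cycle), and you have imported it into the direction where it does not apply.

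The paper's argument for this step is local and does not involve monodromy. Given a nonzero traceless section $A$ with all diagonal terms already zero, pick $v$ with $\Pi_v^0(A)\neq 0$ and a nonzero component $a_{\til{v}}\in H^0(X_v,\cu{L}(\til{v}))$; the section hypothesis is used to ensure $a_{\til{v}}(p_{\til{v}})\neq 0$ at some torus fixed point $p_{\til{v}}\in X_v$. That fixed point is dual to a single maximal cell $\sigma\in\msc{P}$ containing $v$; lift it to the cell $\til{\sigma}\in\til{\msc{P}}$ containing $\til{v}$. Compatibility of the components along the toric $1$-strata through $p_{\til{v}}$ (the factors $\Theta$ act trivially there) forces every vertex of $\til{\sigma}$ to lie in $\til{G}_0(\bb{L})$ --- if some vertex of $\til{\sigma}$ were a branch vertex or a deleted vertex, continuity would force $a_{\til{v}}(p_{\til{v}})=0$, exactly as in the ``only if'' direction of Theorem \ref{thm:simple} (Figure \ref{fig:cycle6}). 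Hence $\partial\til{\sigma}$ is a minimal cycle in $\til{G}_0(\bb{L})$, contradicting the hypothesis. So the cycle is produced directly as the boundary of the one $2$-cell dual to the nonvanishing point, not by tracking the section around the graph; without this step your proof has a genuine gap.
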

	For explicit examples of smoothable pairs $(X_0(B,\msc{P},s),\cu{E}_0(\bb{L},{\bf{k}}_s))$ obtained using the above results, see Examples \ref{eg:1}, \ref{eg:2} and \ref{eg:3}.
	
	\subsection{Further perspectives}
	
	We end this introduction by two remarks. 
	\subsubsection{}
	First of all, associated to the locally free sheaf $\cu{E}_0(\bb{L},{\bf{k}}_s)$ is a constructible sheaf $\cu{F}$ on $P(\bb{L}) = L\times_{B}L$ defined as follows: Let $\pi_B:P(\bb{L})\to B$ be the natural projection map. We define
	$$\til{\msc{P}}:=\{\sigma_1'\times_{\pi}\sigma_2' \mid \sigma_1',\sigma_2'\in\msc{P}_{\pi}\text{ such that }\pi(\sigma_1')=\pi(\sigma_2')\}.$$
	Let $\{v\}\in\msc{P}$ be a vertex and $\til{v} = (v_1',v_2') \in \pi_B^{-1}(v)$. Each $v_i'$ gives a vector bundle $\cu{E}(v_i')$ on $ X_v$, which is either a line bundle or a rank 2 bundle. For $\til{\sigma}\in\til{\msc{P}}$ with $\til{v}\in\til{\sigma}$, we define
	$$\cu{F}(\til{\sigma}):=H^0( X_{\sigma},(\cu{E}(v_1')^*\otimes\cu{E}(v_2'))|_{ X_{\sigma}}),$$
	where $\sigma:=\pi_B(\til{\sigma})$.
	For $\til{\tau}\subset\til{\sigma}$, the generalization map
	$g_{\til{\tau}\til{\sigma}}:\cu{F}(\til{\tau})\to \cu{F}(\til{\sigma})$
	is induced by the inclusion $\iota_{\sigma\tau}: X_{\sigma}\hookrightarrow X_{\tau}$. Clearly,
	$g_{\til{\rho}\til{\sigma}}=g_{\til{\tau}\til{\sigma}}\circ g_{\til{\rho}\til{\tau}}$, whenever $\til{\rho}\subset\til{\tau}\subset\til{\sigma}$. Hence the data $\left(\{\cu{F}(\til{\sigma})\},\{g_{\til{\tau}\til{\sigma}}\}\right)$ defines a constructible sheaf $\cu{F}$ on $P(\bb{L})$ (see e.g. \cite{Kapranov-Schechtman16}). 
	
	Let
	$P_0(\bb{L}):=P(\bb{L})\backslash\Delta_L$,
	where $\Delta_L$ denotes the diagonal. By restricting $\cu{F}$ to $P_0(\bb{L})$, we get a sheaf $\cu{F}_0$ on $P_0(\bb{L})$. By construction, we have canonical identifications
	\begin{equation}\label{eqn:ccc}
	\begin{split}
	H^0( X_0(B,\msc{P},s),\cu{E}nd(\cu{E}_0(\bb{L},{\bf{k}}_s))) & \cong H^0(P(\bb{L}),\cu{F}),\\
	H^0( X_0(B,\msc{P},s),\cu{E}nd_0(\cu{E}_0(\bb{L},{\bf{k}}_s))) & \cong  H^0(P_0(\bb{L}),\cu{F}_0).
	\end{split}
	\end{equation}
	The \emph{coherent-constructible correspondence} for toric varieties was established by Fang-Liu-Treumann-Zaslow in  \cite{FLTZ11a, FLTZ11c} and applied to prove (a version of) homological mirror symmetry (HMS) for toric varieties in \cite{FLTZ11b, FLTZ12}. As mentioned above, one should think of the fiber product $P(\bb{L})$ as a certain fiberwise path space of $\bb{L}$.
	More precisely,	for a non-singular SYZ fibration $p:\check{X}\to B$ and an honest Lagrangian multi-section $\bb{L}\subset \check{X}$, one can talk about the fiberwise geodesic path space as in \cite{KS_HMS_torus_fibration, Ma_thesis}, and then a point $(x_1',x_2')\in P(\bb{L})$ can be regarded as the end points of an affine geodesic from $\bb{L}$ to itself.
	The identifications \eqref{eqn:ccc} suggest that if one consider higher rank sheaves $\cu{E}$ on $X$, the self-Hom space of $\cu{E}$ should be computed by a certain (possibly derived) constructible sheaf on the ``path space'' $P(\bb{L})$. We leave this for future research.
	
	\subsubsection{}
	On the other hand, in order to put our results into the context of the HMS conjecture, it is best to apply the framework laid out in the very recent work \cite{gammage2021homological}. According to Seidel \cite{Seidel-ICM, Seidel-K3}, a large volume limit of the mirror $\check{X}$ should be constructed by removing a normal crossing divisor $D$ which represents the K\"ahler class of $\check{X}$, giving rise to a Weinstein manifold $\check{Y}$. This produces a mirror pair $X_0$ and $\check{Y}$ at the large complex structure/volume limits. The HMS conjecture for this pair is much simpler, due to the fact that the Fukaya category of $\check{Y}$ is quasi-equivalent to a category of sheaves with microlocal supports on its Lagrangian skeleton $\Lambda$, as conjectured by Kontsevich. Much has been done along this direction; readers are referred to e.g. \cite{FLTZ11b, ganatra2020covariantly, gammage2021homological}.  
	
	In \cite{gammage2021homological}, a Lagrangian skeleton $\Lambda(\Phi) \subset \check{Y}(\Phi)$ is constructed by gluing the local models in \cite{FLTZ11b} according to a combinatorial structure $\Phi$ called \textit{fanifold}, which can be extracted from $(B,\msc{P})$ (here we assume that the gluing data $s$ is trivial). Furthermore, they proved that there is a quasi-equivalence
	$$
	\text{DCoh}(X_0) \cong \text{Fuk}(\check{Y}(\Phi),\partial_{\infty}\Lambda(\Phi)),
	$$
	where $\text{DCoh}(X_0)$ is the dg category of coherent sheaves on $X_0$ and $\text{Fuk}(\check{Y}(\Phi),\partial_{\infty}\Lambda(\Phi))$ is the partially wrapped Fukaya category on $(\check{Y}(\Phi),\partial_{\infty}\Lambda(\Phi))$.
	We believe that our locally free sheaf $\cu{E}_0(\bb{L},{\bf{k}}_s)$, as an object in $\text{DCoh}(X_0)$, corresponds to a compact connected immersed exact Lagrangian $\mathbb{L}$ in $\check{Y}(\Phi)$. Moreover, it was conjectured in \cite{gammage2021homological} that there should be a fibration $\tilde{\pi} : \check{Y}(\Phi) \rightarrow B$ serving as an SYZ fibration in the large volume limit. If so, the Lagrangian $\mathbb{L}$ would be a Lagrangian multi-section in the fibration $\tilde{\pi}$. 
	
	The simplicity assumption in Definition \ref{def:simple} (in the rank 2 case) corresponds to the isomorphism $HF^0(\bb{L},\bb{L}) \cong \mathbb{C}$ under HMS. In particular, it should be satisfied when $\mathbb{L}$ is connected and embedded in $\check{Y}$. From \cite{Seidel-ICM, Seidel-K3, gammage2021homological}, we expect that the Fukaya category $\text{Fuk}(\check{X},D)$ can be obtained as a deformation of $\text{Fuk}(\check{Y},\partial_{\infty}\Lambda(\Phi))$ by corrections coming from holomorphic disks which intersect $D$. This deformation is mirror to the deformation of $X_0$ that yields the mirror family $p:\cu{X} \rightarrow S$. In this picture, when the Lagrangian $\mathbb{L}$ is connected and embedded, we believe that there are no holomorphic disks in $\check{X}$ bounded by $\mathbb{L}$. This would imply that $\mathbb{L}$ deforms as an object in the category $\text{Fuk}(\check{X},D)$. Therefore, the locally free sheaf $\cu{E}_0(\bb{L},{\bf{k}}_s)$ should indeed be mirror to the Lagrangian multi-section $\mathbb{L}$.
	
	\subsection*{Acknowledgment}
	We are indebted to the anonymous referees for very constructive comments and suggestions, which have helped to greatly improve the exposition. The third author is grateful to Yamamoto Yuto for useful and joyful discussions. We would also like to thank Yong-Geun Oh and Cheol-Hyun Cho for their interest in this work. 
	
	The work of K. Chan was supported by grants of the Hong Kong Research Grants Council (Project No. CUHK14302617 \& CUHK14303019) and direct grants from CUHK.	The work of Z. N. Ma was supported by the Institute of Mathematical Sciences (IMS) and the Department of Mathematics at The Chinese University of Hong Kong. The work of Y.-H. Suen was supported by IBS-R003-D1.
	
	\section{The Gross-Siebert program}\label{sec:GS}
	
	In this section, we review some machinery in the Gross-Siebert program, mainly following \cite{GS1}.
	
	\subsection{Affine manifolds with singularities and their polyhedral decompositions}
	
	\begin{definition}[\cite{GS1}, Definition 1.15]\label{def:affine manifold with sing}
		An \emph{integral affine manifold with singularities} is a topological manifold $B$ together with a closed subset $\Gamma\subset B$, which is a finite union of locally closed submanifolds of codimension at least 2, such that $B_0:=B\backslash\Gamma$ is an integral affine manifold (meaning that the transition functions are integral affine).
		
		Let $B$ be an integral affine manifold with singularities and $U\subset B$ be an open subset. A continuous function $f:U\to\bb{R}$ is call \emph{integral affine} if $f|_{U\cap B_0}:U\cap B_0\to\bb{R}$ is an integral affine function. The sheaf of integral affine functions on $B$ is denoted by $\cu{A}ff(B,\bb{Z})$.
	\end{definition}
	
	Fix a rank $n$ free abelian group $N\cong\bb{Z}^n$ and let $N_{\bb{R}}:= N \otimes_{\bb{Z}}\bb{R}$.
	
	\begin{definition}[\cite{GS1}, Definition 1.21]
		A \emph{polyhedral decomposition of a closed subset $R\subset N_{\bb{R}}$} is a locally finite covering $\mathscr{P}$ of $R$ by closed convex polytopes (called \emph{cells}) with the property that
		\begin{enumerate}
			\item If $\sigma\in\mathscr{P}$ and $\tau\subset\sigma$ is a face, then $\tau\in\mathscr{P}$.
			\item If $\sigma,\sigma'\in\mathscr{P}$, then $\sigma\cap\sigma'$ is a common face of $\sigma,\sigma'$.  
		\end{enumerate}
		We say $\mathscr{P}$ is \emph{integral} if all vertices (0-dimensional cells) are contained in $N$.
	\end{definition}
	
	For a polyhedral decomposition $\mathscr{P}$ and a cell $\sigma\in\mathscr{P}$, we denote the relative interior of $\sigma$ by
	$$\text{Int}(\sigma):=\sigma\Big\backslash\bigcup_{\tau\in\mathscr{P},\tau\subsetneq\sigma}\tau.$$
	
	\begin{definition}[\cite{GS1}, Definition 1.22]\label{def:poly_decomp}
		Let $B$ be an integral affine manifold with singularities. A \emph{polyhedral decomposition of $B$} is a collection $\mathscr{P}$ of closed subsets of $B$ (called \emph{cells}) covering $B$ which satisfies the following properties. If $\{v\}\in\mathscr{P}$ for some $v\in B$, then $v\notin\Gamma$, and there exist a polyhedral decomposition $\mathscr{P}_v$ of a closed subset $R_v\subset T_vB\cong\Lambda_v\otimes\bb{R}$, which is the closure of an open neighborhood of $0\in T_vB$, and a continuous map $\exp_v:R_v\to B$ with $\exp_v(0)=v$ satisfying the following conditions:
		\begin{enumerate}
			\item $\exp_v$ is a local homeomorphism onto its image, is injective on $\text{Int}(\tau)$ for all $\tau\in\mathscr{P}_v$, and is an integral affine map in some neighborhood of $0\in R_v$.
			\item For every top dimensional cell $\til{\sigma}\in\mathscr{P}_v$, $\exp_v(\text{Int}(\til{\sigma}))\cap\Gamma=\emptyset$ and the restriction of $\exp_v$ to $\text{Int}(\til{\sigma})$ is an integral affine map. Furthermore, $\exp_v(\til{\tau})\in\mathscr{P}$ for all $\til{\tau}\in\mathscr{P}_v$.
			\item $\sigma\in\mathscr{P}$ and $v\in\sigma$ if and only if $\sigma=\exp_v(\til{\sigma})$ for some $\til{\sigma}\in\mathscr{P}_v$ with $0\in\til{\sigma}$.
			\item Every $\sigma\in\mathscr{P}$ contains a point $v$ with $\{v\}\in\mathscr{P}$.
		\end{enumerate}
		We say the polyhedral decomposition is \emph{toric} if it satisfies the additional condition:
		\begin{enumerate}[resume]
			\item For each $\sigma\in\msc{P}$, there is a neighborhood $U_{\sigma}\subset B$ of $\text{Int}(\sigma)$ and an integral affine submersion $S_{\sigma}:U_{\sigma}\to N_{\bb{R}}'$, where $N'$ is a lattice of rank ${\dim(B)-\dim(\sigma)}$ and $S_{\sigma}(\sigma\cap U_{\sigma})=\{0\}$.
		\end{enumerate}
		A polyhedral decomposition of $B$ is called \emph{integral} if all vertices are integral points of $B$.
	\end{definition}

	The $k$-dimensional strata of $(B,\mathscr{P})$ is defined by
	$$B^{(k)}:=\bigcup_{\tau:\dim(\tau)=k}\tau.$$
	If $\msc{P}$ is a toric polyhedral decomposition $\msc{P}$, then for each $\tau\in\msc{P}$, one defines the fan $\Sigma_{\tau}$ as the collection of the cones
	$$K_{\tau}(\sigma):=\bb{R}_{\geq 0}\cdot S_{\tau}(\sigma),$$
	where $\sigma$ runs over all elements in $\msc{P}$ such that $\tau\subset\sigma$ and $\text{Int}(\sigma)\cap U_{\tau}\neq\emptyset$. For a point $y\in \text{Int}(\tau)\backslash\Gamma$, we put
	$$\cu{Q}_{\tau}:=\cu{Q}_{\tau,y}:=\Lambda_y/\Lambda_{\tau,y},$$
	which can be identified with the lattice $N'$ in Condition (5) in Definition \ref{def:poly_decomp}. These lattices define a sheaf $\cu{Q}_{\msc{P}}$ on $B$.

	\begin{definition}[\cite{GS1}, Definition 1.43]\label{def:piecewise_linear}
		Let $B$ be an integral affine manifold with singularities and $\mathscr{P}$ a polyhedral decomposition of $B$. Let $U\subset B$ be an open set. An \emph{integral piecewise linear function on $U$} is a continuous function $\varphi$ so that $\varphi$ is integral affine on $U\cap \text{Int}(\sigma)$ for any top dimensional cell $\sigma\in\msc{P}$, and for any $y\in U\cap \text{Int}(\tau)$ (for some $\tau\in\msc{P}$), there exists a neighborhood $V$ of $y$ and $f\in\Gamma(V,\cu{A}ff(B,\bb{Z}))$ such that $\varphi=f$ on $V\cap \text{Int}(\tau)$. We denote the sheaf of integral piecewise linear functions on $B$ by $\cu{PL}_{\mathscr{P}}(B,\bb{Z})$.
	\end{definition}
	
	There is a natural inclusion $\cu{A}ff(B,\bb{Z})\hookrightarrow\cu{PL}_{\mathscr{P}}(B,\bb{Z})$, and we let $\cu{MPL}_{\mathscr{P}}$ be the quotient:
	$$0\to \cu{A}ff(B,\bb{Z})\to\cu{PL}_{\mathscr{P}}(B,\bb{Z})\to\cu{MPL}_{\mathscr{P}}\to 0.$$
	Locally, an element $\varphi\in\Gamma(B,\cu{MPL}_{\mathscr{P}})$ is a collection of piecewise linear functions $\{\varphi_U\}$ so that on each overlap $U\cap V$, the difference
	$$\varphi_U|_{B_0}-\varphi_V|_{B_0}$$
	is an integral affine function on $U\cap V\cap B_0$.
	
	\begin{definition}[\cite{GS1}, Definition 1.45]
		The sheaf $\cu{MPL}_{\mathscr{P}}$ is called the \emph{sheaf of multi-valued piecewise linear functions of the pair $(B,\msc{P})$}.
	\end{definition}
	
	The sheaf $\cu{MPL}_{\mathscr{P}}$ also fits into the following exact sequence:
	$$0\to i_*\Lambda^*\to\cu{PL}_{\msc{P}}(B,\bb{Z})/\bb{Z}\to\cu{MPL}_{\mathscr{P}}\to 0,$$
	where $\Lambda\subset TB_0$ is the lattice inherited from the integral structure and $\Lambda^*\subset T^*B_0$ is the dual lattice.
	
	\begin{definition}[\cite{GS1}, Definition 1.46]
		For each element $\varphi\in H^0(B,\cu{MPL}_{\mathscr{P}})$, its image under the connecting map $c_1:H^0(B,\cu{MPL}_{\mathscr{P}})\to H^1(B,i_*\Lambda^*)$ is called the \emph{first Chern class} of $\varphi$.
	\end{definition}

	\begin{definition}[\cite{GS1}, Definitions 1.47]
		A section $\varphi\in H^0(B,\cu{MPL}_{\mathscr{P}})$ is said to be \emph{(strictly) convex} if for any vertex $\{v\}\in\msc{P}$, there is a neighborhood $U\subset B$ of $v$ such that there is a (strictly) convex representative $\varphi_i$. 
	\end{definition}
	
	\begin{definition}[\cite{GS1}, Definition 1.48]
		A toric polyhedral decomposition $\msc{P}$ is said to be \emph{regular} if there exists a strictly convex multi-valued piecewise linear function $\varphi\in H^0(B,\cu{MPL}_{\mathscr{P}})$.
	\end{definition}
	
	\begin{assumption}
		All polyhedral decompositions in this paper are assumed to be regular; in particular, they are integral and toric.
	\end{assumption}
	
	Given a regular polyhedral decomposition $(\msc{P},\varphi)$, one can obtain another affine manifold with singularities $\check{B}$ together with a regular polyhedral decomposition $(\check{\msc{P}},\check{\varphi})$ by taking the dual cell of each cell in $\msc{P}$. We will not give the precise construction here but let us mention some facts about $(\check{B},\check{\msc{P}},\check{\varphi})$. Topologically, $\check{B}$ is same as $B$ and their singular loci coincide. However, their affine structures and monodromies around the singular loci are dual to each other. See \cite{GS1}, Section 1.4 for the precise construction of $(\check{B},\check{\msc{P}},\check{\varphi})$.
	
	\begin{definition}[cf. \cite{GS1}, Propositions 1.50 \& 1.51]
		The triple $(\check{B},\check{\msc{P}},\check{\varphi})$ is called the \emph{discrete Legendre transform of $(B,\msc{P},\varphi)$}.
	\end{definition}
	
	We will need the following notion later.
	
	\begin{definition}\label{def:elementary}
		A regular polyhedral decomposition $\msc{P}$ is called \emph{elementary} if for any cell $\sigma\in\msc{P}$, the dual cell $\check{\sigma}$ is an elementary simplex.
	\end{definition}

	\subsection{Toric degenerations}\label{sec:toric_deg}
	
	In \cite{GS1}, Gross and Siebert defined a \emph{toric degeneration} (of Calabi-Yau varieties) as a flat family $\check{p}:\check{\cu{X}}\to \Delta = \text{Spec}(\bb{C}[[t]])$ such that generic fiber of $\check{p}$ is smooth and the central fiber $\check{X}_0:=\check{p}^{-1}(0)$ is a union of toric varieties, intersecting along toric strata. By gluing the fan of each toric piece, they obtained an affine manifold with singularities $\check{B}$ together with a polyhedral decomposition $\check{\msc{P}}$. When the family $\check{\cu{X}}$ is polarized, the resulting polyhedral decomposition is regular, so there is a strictly convex multi-valued piecewise linear function $\check{\varphi}$ on $\check{B}$, giving rise to the discrete Legendre transform $(B,\msc{P},\varphi)$ of $(\check{B},\check{\msc{P}},\check{\varphi})$.
	The important \emph{reconstruction problem in mirror symmetry} is asking whether one can construct another toric degeneration $p:\cu{X}\to \Delta$ (which acts as a mirror family) from $(B,\msc{P},\varphi)$.
	
	In this section, we review the fan construction of the algebraic spaces associated to $(B,\msc{P})$ in \cite{GS1}; with a good choice of gluing data, such spaces serve as the central fibers of the toric degenerations $p:\cu{X}\to \Delta$ and $\check{p}:\check{\cu{X}}\to \Delta$.\footnote{For the cone construction, please refer to \cite[Section 2.1]{GS1}.}
	
	\subsubsection{The fan construction}
	
	We first recall the category $Cat(\msc{P})$, whose objects are given by elements of $\msc{P}$. To define morphisms, let $Bar(\msc{P})$ be the barycentric subdivision of $\msc{P}$, and set
	$$Hom(\tau,\sigma) := 
	\begin{cases}
	\emptyset & \text{ if }\tau\nsubseteq\sigma,\\
	\{id\} & \text{ if }\tau=\sigma.
	\end{cases}$$
	For the case $\tau\subset\sigma$, $Hom(\tau,\sigma)$ consists of all 1-simplices whose endpoints are the barycenters of $\tau$ and $\sigma$. For $e_1\in Hom(\tau,\sigma),e_2\in Hom(\sigma,\omega)$, the composition $e_2\circ e_1$ is defined as the third edge of the unique 2-simplex containing $e_1,e_2$.
	
	For each $\sigma\in\msc{P}$, the map $S_{\sigma}:U_{\sigma}\to\cu{Q}_{\sigma}$ defines a fan $\Sigma_{\sigma}$ on $\cu{Q}_{\sigma}$. Let $X_{\sigma}$ be the toric variety associated to $\Sigma_{\sigma}$. For $\tau\subset\sigma$, let
	$$\sigma^{-1}\Sigma_{\tau}:=\{K\in\Sigma_{\tau}:K\supset K_{\tau}(\sigma)\}.$$
	There is a fan projection $p_{\tau\sigma}:\sigma^{-1}\Sigma_{\tau}\to\Sigma_{\sigma}$, given by quotienting along the direction $\Lambda_{\sigma}$ and an inclusion $j_{\sigma\tau}:\sigma^{-1}\Sigma_{\tau}\to\Sigma_{\tau}$. There is a natural embedding $\iota_{\sigma\tau}:X_{\sigma}\to X(\sigma^{-1}\Sigma_{\tau})\subset X_{\tau}$ induced by the ring map
	$$z^m\mapsto\begin{cases}
	z^m & \text{ if }m\in K^{\vee}\cap K_{\tau}(\sigma)^{\perp}\cap\cu{Q}_{\sigma}^*=((K+\Lambda_{\sigma,\bb{R}})/\lambda_{\sigma,\bb{R}})^{\vee}\cap\cu{Q}_{\sigma}^*,
	\\0 & \text{ otherwise},
	\end{cases}$$
	for any $K\in\sigma^{-1}\Sigma_{\tau}$. For $e\in Hom(\tau,\sigma)$, we define the functor $F_A:{\bf{Cat}}(\msc{P})\to{\bf{Sch}}$ by $F(\tau) := X_{\tau}$ and $F(e):=\iota_{\sigma\tau}$ the natural inclusion.
	
	We can also twist the functor by certain gluing data. The barycentric subdivision $Bar(\msc{P})$ of $\msc{P}$ defines an open covering $\cu{W}:=\{W_{\tau}\}$ of $B$, where
	$$W_{\tau}:=\bigcup_{\substack{\sigma\in Bar(\msc{P})\\\sigma\cap \text{Int}(\tau)\neq\emptyset}}\text{Int}(\sigma).$$
	For $e\in Hom(\tau,\sigma)$, we define $W_e:=W_{\tau}\cap W_{\sigma}$.
	
	\begin{definition}[\cite{GS1}, Definition 2.10]
		Let $S$ be a scheme. A \emph{closed gluing data} (for the fan picture) for $\msc{P}$ over $S$ is C\v{e}ch 1-cocycle $s=(s_e)_{e\in\coprod_{\tau,\sigma\in\msc{P}}Hom(\tau,\sigma)}$ of the sheaf $\cu{Q}_{\msc{P}}\otimes_{\bb{Z}}\bb{G}_m(S)$ with respect to the cover $\cu{W}$ of $B$. Here, $s_e\in\Gamma(W_e,\cu{Q}_{\msc{P}}\otimes_{\bb{Z}}\bb{G}_m(S))=\cu{Q}_{\sigma}\otimes_{\bb{Z}}\bb{G}_m(S)$ for $e\in Hom(\tau,\sigma)$.
	\end{definition}
	
	The torus $\cu{Q}_{\sigma}\otimes_{\bb{Z}}\bb{G}_m(S)$ acts on $X_{\sigma}\times S$, so an element $s_e\in\cu{Q}_{\sigma}\otimes_{\bb{Z}}\bb{G}_m(S)$ gives an automorphism $s_e:X_{\sigma}\times S\to X_{\sigma}\times S$. We then obtain an $s$-twisted functor $F_{S,s}:{\mf{Cat}}(\msc{P})\to{\mf{Sch}}_S$ by setting
	\begin{align*}
	F_{S,s}(\tau) := X_{\tau}\times S,\, F_{S,s}(e) := (F(s)\times id_S)\circ s_e.
	\end{align*}
	We define
	$$X_0(B,\msc{P},s):=\lim_{\longrightarrow}F_{S,s}.$$
	
	In \cite{GS1}, Gross and Siebert introduced a special set of gluing data, which they called \emph{open gluing data} (for the fan picture). We will not go into details here (readers are referred to \cite[Definition 2.25]{GS1} for the precise definition), but it is essential for $X_0(B,\msc{P},s)$ to be the central fiber of some toric degeneration. Given such an open gluing data for $(B,\msc{P})$, one can associate a closed gluing data $s$ for the fan picture of $(B,\msc{P})$ (see \cite[Proposition 2.32]{GS1}). There is then an obstruction map $o:H^1(\cu{W},\cu{Q}_{\msc{P}}\otimes_{\bb{Z}}\bb{C}^{\times})\to H^2(B,\bb{C}^{\times})$ such that when $o(s)=1$, a projective scheme $X_0(B,\msc{P},s)$ can be constructed. 
	Throughout this paper, we assume that all closed gluing data
	are induced by open gluing data for the fan picture. We also assume that $A=\bb{C}$ and $S = \text{Spec}(\bb{C})$. One of the main result in \cite{GS1} is the following
	\begin{theorem}[\cite{GS1}, Theorem 5.2]
		Suppose $(B,\msc{P})$ is positive and simple and $s$ satisfies the (LC) condition in \cite[Proposition 4.25]{GS1}. Then there exists a log structure on $X_0(B,\msc{P},s)$ and a morphism $X_0(B,\msc{P},s)^{\dagger}\to Spec(\bb{C})^{\dagger}$ which is log smooth away from a subset $Z\subset X_0(B,\msc{P},s)$ of codimension 2.
	\end{theorem}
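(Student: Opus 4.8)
The plan is to construct the log structure on $X_0 := X_0(B,\msc{P},s)$ by gluing explicit local models extracted from $(B,\msc{P},\varphi)$ and from the gluing data $s$, and then to read off log smoothness directly from those models. First I would fix the relevant log-geometric framework (log structures in the sense of Fontaine--Illusie--Kato, log smooth morphisms, the standard log point $\Spec{\bb{C}}^{\dagger}$) and reduce the assertion to an \'etale-local statement: it suffices to cover $X_0$ by charts on which I exhibit a fine log structure together with a log morphism to $\Spec{\bb{C}}^{\dagger}$, plus transition data showing these agree on overlaps.

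Next come the local models. Over a torus-fixed point of a component $X_{\sigma}$ lying away from the discriminant $\Gamma \subset B$, the completion of $X_0$ is $\bb{C}[P]/(z^{\rho})$ for a sharp toric monoid $P$ and a homomorphism $\rho \colon P \to \bb{N}$, both read off from the fan $\Sigma_{\sigma}$ and the multi-valued function $\varphi$; positivity of $(B,\msc{P})$ is what forces the relevant cone to be strictly convex, hence $P$ sharp and $\rho$ surjective, so that the log structure pulled back along $z^{\rho} \colon \Spec{\bb{C}[P]/(z^{\rho})} \to \Spec{\bb{C}}^{\dagger}$ is log smooth by the usual toric computation. Over $\Gamma$, where $X_0$ is no longer toric, I would invoke the \emph{simple} hypothesis: simplicity forces the local affine monodromy around each component of $\Gamma$ to split into elementary ``rank one'' blocks, which pins down a short finite list of normal forms for the pair $(X_0, \mathcal{M})$ near the bad locus, generalizing the familiar conifold model.

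To globalize, I would first assemble the prospective ghost sheaf $\ol{\mathcal{M}}$ on $X_0$ directly from $(B,\msc{P})$ --- it restricts to $P / \rho^{-1}(0)$ on the toric charts and does not depend on $s$ --- and then lift $\ol{\mathcal{M}}$ to an honest log structure $\mathcal{M} \to \mathcal{O}_{X_0}$. Isomorphism classes of such lifts form a torsor under $H^1\big(X_0, \mathcal{H}om(\ol{\mathcal{M}}^{\mathrm{gp}}, \mathcal{O}_{X_0}^{\times})\big)$, with the obstruction to existence living in the corresponding $H^2$; the role of the (LC) condition of \cite[Proposition 4.25]{GS1} is exactly to annihilate this obstruction and to single out the lift compatible with the gluing prescribed by $s$. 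Log smoothness of $X_0^{\dagger} \to \Spec{\bb{C}}^{\dagger}$ away from a codimension $2$ subset $Z$ then follows by inspection of the models: off $\Gamma$ the model is the toric one above, and on the charts over $\Gamma$ one checks from the normal forms that the non-log-smooth locus is cut out in codimension $2$ --- this is the origin of $Z$.

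The hard part will be the two genuinely non-formal ingredients. First, the cohomological analysis: showing that the gluing obstruction is governed by, and is killed under, the (LC) condition, so that $\mathcal{M}$ exists globally and is compatible with $s$. Second, the monodromy classification near $Z$: proving that simplicity really does reduce the local geometry to the short list of normal forms, and that on that list the non-log-smooth locus is exactly codimension $2$ and no larger. By contrast, sharpness of the local monoids from positivity, log smoothness of the toric local model, and the functoriality of the gluing functor are all routine.
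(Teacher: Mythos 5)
There is nothing in the paper to compare your argument against: this statement is quoted verbatim from Gross--Siebert \cite[Theorem 5.2]{GS1} and used as a black box, so the only meaningful comparison is with the actual proof in \cite{GS1}. Your outline has the right skeleton --- \'etale-local models of the form $\text{Spec}(\bb{C}[P]/(z^{\rho}))$ over the standard log point, gluing, and reading off log smoothness away from a codimension-2 locus $Z$ --- but two of your framing choices are genuinely off the mark.

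First, the claim that ``over $\Gamma$, $X_0$ is no longer toric'' is false: as a scheme, $X_0(B,\msc{P},s)$ is glued from toric strata everywhere and its local structure does not see $\Gamma$ at all. What fails over the discriminant is the possibility of gluing the naive toric/divisorial log structures consistently across the double locus, because of the affine monodromy. Accordingly, $Z$ is not a locus lying over $\Gamma$ where the scheme degenerates further; it is the zero locus, inside the double locus, of the data defining the log structure. Gross--Siebert make this precise via a sheaf $\mathcal{LS}^{+}_{\mathrm{pre}}$ whose sections amount to tuples of functions $(f_e)$ on the components of the double locus, the log structure being log smooth exactly where these do not vanish; positivity is what makes the $f_e$ regular (zeros rather than poles), while simplicity controls the shape of the zero loci (and is what the later smoothing results need), so your attribution of ``sharpness of the local monoid'' to positivity and ``normal forms near $Z$'' to simplicity only loosely matches how these hypotheses actually enter. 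Second, the (LC) condition is not there to annihilate an abstract $H^2$ obstruction to lifting a ghost sheaf: \cite[Proposition 4.25]{GS1} characterizes the closed gluing data that are induced by \emph{open} gluing data, and it is the open gluing data that furnish the \'etale-local models $\text{Spec}(\bb{C}[P]/(z^{\rho}))$ compatibly with $s$ in the first place. Your torsor/obstruction formalism is a reasonable abstract shadow of this, but as stated it defers exactly the step that is hard --- constructing the local models and their compatibilities from $s$ --- so the proposal should be read as a plausible outline of \cite{GS1} with two substantive misidentifications, not as a proof.
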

	
	\begin{remark}
		When $(B,\msc{P})$ (and hence $(\check{B},\check{\msc{P}}))$ is positive and simple, $X_0(B,\msc{P},s)$ and $\check{X}_0(\check{B},\check{\msc{P}},\check{s})$ are mirror to each other as log Calabi-Yau spaces in an appropriate sense; see \cite[Section 5.3]{GS1} for details.
	\end{remark}
	
	As aforementioned, the reconstruction problem in mirror symmetry is to construct a polarized toric degeneration $p:\cu{X}\to \Delta$ whose central fiber is given by $ X_0(B,\msc{P},s)$ for  some open gluing data $s$ over $\text{Spec}(\bb{C})$. This was solved by Gross and Siebert in \cite{GS11} using an explicit order-by-order algorithm and a key combinatorial object called \emph{scattering diagram} first introduced by Kontsevich and Soibelman in \cite{KS_scattering} (who first solved the reconstruction problem in dimension 2, but over non-Archimedean fields). When $(B,\msc{P})$ is \emph{positive} and \emph{simple}, Gross and Siebert proved that $ X_0(B,\msc{P},s)$ is always smoothable by writing down an explicit toric degeneration. 
	
	In \cite{GS2}, Gross and Siebert studied a specific type of logarithmic deformations, called \emph{divisoral deformations}. Similar to the classical deformation theory of schemes, the first order divisoral deformations of $ X_0(B,\msc{P},s)$ are parametrized by a first cohomology group $H^1( X_0(B,\msc{P},s),j_*\Theta_{ X_0(B,\msc{P},s)^{\dagger}/\bb{C}^{\dagger}})$, while the obstructions lie in the second cohomology group $H^2( X_0(B,\msc{P},s),j_*\Theta_{X_0(B,\msc{P},s)^{\dagger}/\bb{C}^{\dagger}})$ (see \cite[Theorem 2.11]{GS2}); here $\Theta_{ X_0(B,\msc{P},s)^{\dagger}/\bb{C}^{\dagger}}$ is the sheaf of \emph{logarithmic tangent vectors} of the log scheme $ X_0(B,\msc{P},s)^{\dagger}$ and $j: X_0(B,\msc{P},s)\backslash Z\hookrightarrow X_0(B,\msc{P},s)$ is the inclusion map. However, they did not prove existence of smoothings along this line of thought.
	
	Very recently, the first two authors of this paper and Leung \cite{CLM_smoothing}, by using gluing of local differential graded Batalin-Vilkovisky (dgBV) algebras and partly motivated by \cite{GS2}, developed an algebraic framework to prove existence of formal smoothings for singular Calabi-Yau varieties with prescribed local models. This covers the log smooth case studied by Friedman \cite{Friedman} and Kawamata-Namikawa \cite{Kawamata-Namikawa} as well as the maximally degenerate case studied by Kontsevich-Soibelman \cite{KS_scattering} and Gross-Siebert \cite{GS11}. More importantly, this approach provides a singular version of the classical Bogomolov-Tian-Todorov theory and bypasses the complicated scattering diagrams.
	
	This framework was subsequently applied by Felten-Filip-Ruddat in \cite{Ruddat_smoothing} to produce smoothings of a very general class of varieties called \emph{toriodal crossing spaces}.\footnote{Applying results in Ruddat-Siebert \cite{Ruddat_Siebert}, they were able to prove that the smoothings are actually analytic.} Such an algebraic framework should be applicable in a variety of settings, e.g. it was applied to smoothing of pairs in \cite{CM_pair}. In this paper, which can be regarded as a sequel to \cite{CM_pair}, we show how this approach can lead to an explicit, combinatorial construction of smoothable pairs in dimension 2.

	\section{Tropical Lagrangian multi-sections}\label{sec:trop_Lag}
	
	In this section, we introduce the notion of a tropical Lagrangian multi-section when $\dim(B)=2$.\footnote{From this point on, we will always assume that $\dim(B) = 2$.} These tropical objects should be viewed as limiting versions of Lagrangian multi-sections of a Lagrangian torus fibration (or SYZ fibration). In \cite{CS_SYZ_imm_Lag}, the first and third authors of this paper considered the case of a semi-flat Lagrangian torus fibration $X(B)\to B$, where a Lagrangian multi-section can be described by an unbranched covering map $\pi:L\to B$ together with a Lagrangian immersion into the symplectic manifold $X(B)$. However, in general (e.g., when $B$ is simply connected), the covering map $\pi:L\to B$ would have non-empty branch locus. We begin by describing what kind of covering maps is allowed.
	
	\begin{definition}
		Let $B$ be a 2-dimensional integral affine manifold with singularities equipped with a polyhedral decomposition $\msc{P}$. Let $L$ be a topological manifold. A $r$-fold topological covering map $\pi:L\to B$ with branch locus $S$ is called \emph{admissible} if
		\begin{enumerate}
			\item $S\subset B^{(0)}$, and
			\item $\pi^{-1}(B_0\backslash S)$ is an integral affine manifold such that $\pi|_{\pi^{-1}(B_0\backslash S)}:\pi^{-1}(B_0\backslash S)\to B_0\backslash S$ is an integral affine map.
		\end{enumerate}
		An admissible covering map $\pi:L\to B$ is said to have \emph{simple branching} if it satisfies the following extra condition:
		\begin{enumerate}[resume]
			\item For any $x\in B$, there exists a neighborhood $U\subset B$ of $x$ such that the preimage $\pi^{-1}(U)$ can be written as
			$$U'\amalg\coprod_{i=1}^{r-2}U_i'$$
			for some open subsets $U',U_1',\dots, U_{r-2}'\subset L$ so that $\pi|_{U'}:U'\to U$ is a (possibly branched) $2$-fold covering map.
		\end{enumerate}
	\end{definition}
	
	Given an admissible covering map $\pi:L\to B$, the domain $L$ is naturally an integral affine manifold with singularities and the singular locus is given by $S'\amalg\pi^{-1}(\Gamma)$, where $S'\subset L$ is the ramification locus of $\pi:L\to B$. We need to distinguish these two singular sets combinatorially.  
	
	\begin{definition}\label{def:poly_decomp_cover}
		Let $B$ be a 2-dimensional integral affine manifold with singularities equipped with a polyhedral decomposition $\msc{P}$. Let $\pi:L\to B$ be an admissible covering map. A \emph{polyhedral decomposition of $\pi:L\to B$} is a collection $\msc{P}_{\pi}$ of closed subsets of $L$ (also called \emph{cells}) covering $L$ so that
		\begin{enumerate}
			\item $\pi(\sigma')\in\msc{P}$ for all $\sigma'\in\msc{P}_{\pi}$;
			\item for any $\sigma\in\msc{P}$, we have $$\pi^{-1}(\sigma)=\bigcup_{\sigma'\in\msc{P}_{\pi}:\pi(\sigma')=\sigma}\sigma'$$
			and if we define the \emph{relative interior} of $\sigma'$ to be
			$$\text{Int}(\sigma'):=\pi^{-1}(\text{Int}(\sigma))\cap\sigma',$$
			then $\pi|_{\text{Int}(\sigma')}:\text{Int}(\sigma')\to \text{Int}(\sigma)$ is a homeomorphism; and
			\item if $\pi(\sigma^{(\alpha)})=\pi(\sigma^{(\beta)})$ and $\sigma^{(\alpha)}\cap\sigma^{(\beta)}\neq\emptyset$, then $\sigma^{(\alpha)}\cap\sigma^{(\beta)}\subset S'$.
		\end{enumerate}
		We define
		$$\dim(\sigma'):=\dim(\sigma)$$
		for all $\sigma'\in\cu{P}_{\pi}$ with $\pi(\sigma')=\sigma\in\msc{P}$.
	\end{definition}
	
	Clearly, if $\msc{P}_{\pi}$ is a polyhedral decomposition of an admissible covering map $\pi:L\to B$, then
	$$\pi(\msc{P}_{\pi}):=\{\pi(\sigma') \mid \sigma'\in\msc{P}_{\pi}\}=\msc{P}.$$
	We denote the $k$-dimensional strata of $(L,\msc{P}_{\pi})$ by
	$$L^{(k)}:=\bigcup_{\sigma':\dim(\sigma')=k}\sigma'.$$

	\begin{example}\label{eg:rank2}
		Let $B$ be the boundary of the unit cube in $\bb{R}^3$. Let $L$ be a 2-torus. Then $L$ branches over $B$ at 4 points. Denote this covering map by $\pi:L\to B$. We can assume the branch points (colored points) are located as in Figure \ref{fig:example_rk2}. Vertices with same color are identified and each colored vertex on $L$ corresponds to a branch point on $B$ with the same color.
		\begin{figure}[ht]
			\includegraphics[width=110mm]{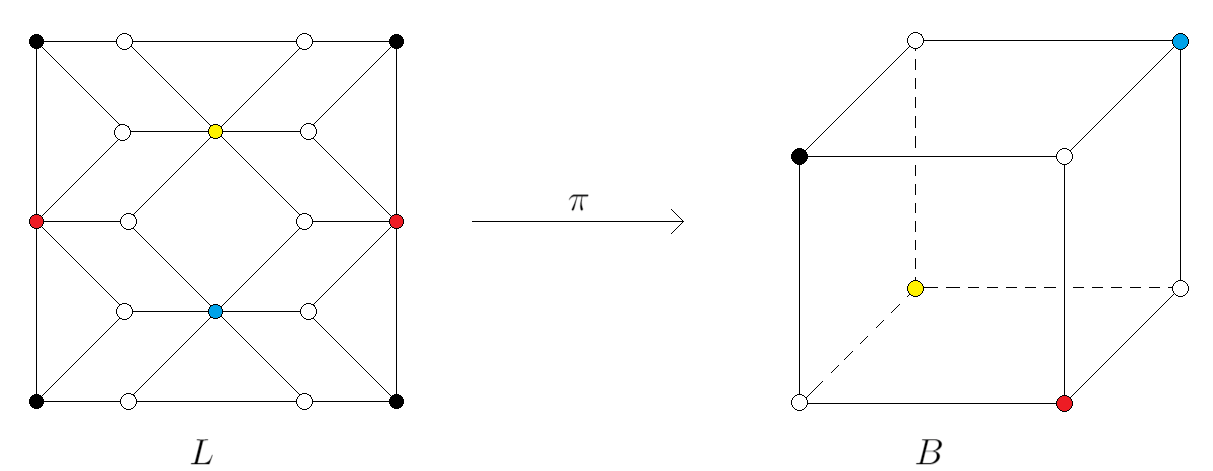}
			\caption{}
			\label{fig:example_rk2}
		\end{figure}
	\end{example}
	
	As the domain $L$ is an integral affine manifold with singularities, we can therefore define the sheaf of integral affine functions $\cu{A}ff(L,\bb{Z})$ on $L$ as before. However, since the singular locus $S'$ lies in $L^{(0)}$, we need to clarify what we mean by a piecewise linear function though it is similar to Definition \ref{def:piecewise_linear}:
	\begin{definition}
		Let $B$ be an integral affine manifold with singularities equipped with a polyhedral decomposition $\msc{P}$. Let $\pi:L\to B$ be an admissible covering map equipped with a polyhedral decomposition $\msc{P}_{\pi}$. Let $\til{U}$ be an open subset of $L$. A \emph{piecewise linear function on $\til{U}$} is a continuous function $\varphi'$ which is affine linear on $\text{Int}(\sigma')$ for any maximal cell $\sigma'\in\msc{P}_{\pi}$ and satisfies the following property: for any $y'\in\til{U}$ and $y'\in \text{Int}(\tau')$ (for some $\tau'\in\msc{P}_{\pi}$), there is a neighborhood $\til{V}$ of $y'$ and $f'\in\Gamma(\til{V},\cu{A}ff(L,\bb{Z}))$ such that
		$$\varphi'|_{\til{V}\cap\tau'}=f'|_{\til{V}\cap\tau'}.$$
		We denote the sheaf of piecewise linear functions on $L$ by $\cu{PL}_{\msc{P}_{\pi}}(L,\bb{Z})$.
	\end{definition}
	
	\begin{definition}
		The \emph{sheaf of multi-valued piecewise linear functions} $\cu{MPL}_{\mathscr{P}_{\pi}}$ on $L$ is defined as the quotient
		$$0\to \cu{A}ff(L,\bb{Z})\to\cu{PL}_{\mathscr{P}_{\pi}}(L,\bb{Z})\to\cu{MPL}_{\mathscr{P}_{\pi}}\to 0.$$
	\end{definition}
	
	We are now ready to define the main object to be studied in this paper.
	\begin{definition}\label{def:trop_lag}
		Let $B$ be a 2-dimensional integral affine manifold with singularities equipped with a polyhedral decomposition $\mathscr{P}$. A \emph{tropical Lagrangian multi-section of rank $r$} is a quadruple $\bb{L}:=(L,\pi,\msc{P}_{\pi},\varphi')$, where
		\begin{enumerate}
			\item [1)] $L$ is a topological manifold and $\pi:L\to B$ is an admissible $r$-fold covering map,
			\item [2)] $\msc{P}_{\pi}$ is a polyhedral decomposition of $\pi:L\to B$, and
			\item [3)] $\varphi'$ is a multi-valued piecewise linear function on $L$.
		\end{enumerate}
	\end{definition}

	\section{A local model around the ramification locus}\label{sec:rk2_trop_lag}\label{sec:local_model}
	
	In this section, we prescribe a local model for $\bb{L}$ around each ramification point of $\pi:L\to B$; such a model is motivated by the work \cite{branched_cover_fan} of Payne and the previous work \cite{Suen_TP2} of the third author. We also give an explicit construction of a certain class of rank 2 tropical Lagrangian multi-sections over $B$.
	
	\begin{definition}
		Let $B$ be an integral affine manifold with singularities equipped with a polyhedral decomposition $\msc{P}$, and $\pi:L\to B$ an admissible covering map. A branch point $v\in S\subset B$ is called \emph{standard} if there is an isomorphism $\Sigma_v\cong\Sigma_{\bb{P}^2}$ between the fan $\Sigma_v$ and that of $\bb{P}^2$.
	\end{definition}
	
	For a standard vertex $v\in S$, we construct a fan $\Sigma_{v'}$, a $2$-fold covering map $\pi_*:|\Sigma_{v'}|\to|\Sigma_v|$ with a unique branch point at $0$ and a piecewise linear function $\varphi_{v'}:|\Sigma_{v'}|\to\bb{R}$ on $|\Sigma_{v'}|$ as follows.
	
	Let $K_{\sigma_0},K_{\sigma_1},K_{\sigma_2}\in\Sigma_v$ be the top dimensional cones of $\Sigma_v$ which correspond to the cones $\sigma_0,\sigma_1,\sigma_2 \in \Sigma_{\bb{P}^2}$ (see Figure \ref{fig:L}) respectively. Let $K_{\sigma_i}^{\pm}$ be two copies of $K_{\sigma_i}$. Let $\rho_j,\rho_k$ be the rays spanning $K_{\sigma_i}$ and $\rho_j^{\pm},\rho_k^{\pm}$ be that for $K_{\sigma_i}^{\pm}$, for $i,j,k \in \{0,1,2\}$ being distinct. We glue $K_{\sigma_0}^{\pm}$ with $K_{\sigma_1}^{\mp}$ and $K_{\sigma_2}^{\mp}$ by identifying $\rho_1^{\pm}$ with $\rho_1^{\mp}$ and $\rho_2^{\pm}$ with $\rho_2^{\mp}$ respectively, and glue $K_{\sigma_1}^{\pm}$ with $K_{\sigma_2}^{\mp}$ by identifying $\rho_0^{\pm}$ with $\rho_0^{\mp}$. Then the fan $\Sigma_{v'}$ is given by
	$$\{K_{\sigma_i}^{\pm},\rho_i^{\pm},0 \mid i=0,1,2\}.$$
	The projection $\pi_*:|\Sigma_{v'}|\to|\Sigma_v|$ is defined by $K_{\sigma_i}^{\pm}\to K_{\sigma_i}$.
	
	On the fan $\Sigma_{v'}$, one can define a piecewise linear function $\varphi_{v'}$ by setting
	$$\varphi_{v'}:=\begin{cases}
	(n-m)\xi_1 &\text{ on }K_{\sigma_0}^+,\\
	(n-m)\xi_2 &\text{ on }K_{\sigma_0}^-,\\
	n\xi_1 &\text{ on }K_{\sigma_1}^+,\\
	n\xi_1+(n-m)\xi_2 &\text{ on }K_{\sigma_1}^-,\\
	(n-m)\xi_1+n\xi_2 &\text{ on }K_{\sigma_2}^+,\\
	n\xi_2 &\text{ on }K_{\sigma_2}^-,
	\end{cases}$$
	for some $m,n\in\bb{Z}$ with $m\neq n$. Here, $\xi_1, \xi_2$ are affine coordinates on $|\Sigma_v|\cong\bb{R}^2$. This gives a tropical Lagrangian multi-section $(|\Sigma_{v'}|,\pi_*,\Sigma_{v'},\varphi_{v'})$ of the smooth affine manifold $|\Sigma_v|$ with polyhedral decomposition given by the fan $\Sigma_v$.
	
	Recall that when $\pi$ has simple branching, for any branch point $v\in S$, one can choose a neighborhood $U$ of $v$ such that
	$$\pi^{-1}(U)=U'\amalg\coprod_{i=1}^{r-2}U_i',$$
	for some open subsets $U',U_1',\dots, U_{r-2}'\subset L$ and $\pi|_{U'}:U'\to U$ is a branched 2-fold covering map. Let $\msc{P}_{\pi}$ be a polyhedral decomposition of $\pi:L\to B$. Denote by $\msc{P}_{v'}$ the collection of all cells $\sigma'\in\msc{P}_{\pi}$ such that $v'\in\sigma'$. Write
	$$\pi^{-1}(\sigma\cap U)\cap U'=(\sigma^+\cup\sigma^-)\cap U'$$
	for $\sigma\in\msc{P}$ and $\sigma^{\pm}\in\msc{P}_{v'}$.

	\begin{definition}\label{def:proj_Lag}
		Let $B$ be a 2-dimensional integral affine manifold with singularities, $\msc{P}$ a polyhedral decomposition of $B$. A tropical Lagrangian multi-section $\bb{L}:=(L,\pi,\msc{P}_{\pi},\varphi')$ is said to be \emph{of class $\mathscr{S}$}, denoted as $\bb{L}\in\mathscr{S}$, if 
		\begin{enumerate}
			\item every branch point of $\pi:L\to B$ is standard;
			\item for any vertex $v\in S$, there exists a neighborhood $U\subset B$ of $v$ and two integral affine embeddings $f':U'\to|\Sigma_{v'}|$, $f:U\to|\Sigma_v|$ such that $f'(\sigma^{\pm}\cap U')\subset K_{\sigma}^{\pm}$ for all $\sigma\in\msc{P}$ with $v\in\sigma$ and $\pi_*\circ f'=f\circ\pi$; and
			\item for each vertex $v\in S$, $\varphi'$ can be represented by $\varphi_{v'}$ on $U'$, i.e., if $\varphi'_{U'}$ is a representative of $\varphi'$ on $U'$, then
			$$\varphi_{U'}'-\varphi_{v'}|_{f'(U')}\circ f'\in \cu{A}ff(U',\bb{Z}),$$
			for some $m(v'),n(v')\in\bb{Z}$ defining $\varphi_{v'}$.
		\end{enumerate}
	\end{definition}

	For \emph{fixed} $m,n\in\bb{Z}$ with $m\neq n$, there is a subclass $\msc{S}_{m,n}$ of $\msc{S}$ defined as
	$$\msc{S}_{m,n}:=\{\bb{L}\in\msc{S}:\{m(v'),n(v')\}=\{m,n\}\text{ for all }v'\in S'\}.$$
	This collection of tropical Lagrangian multi-sections will be our main object of study in Section \ref{sec:simple_smooth}.

	\section{Construction of $\cu{E}_0(\bb{L},{\bf{k}}_s)$ for $\bb{L} \in \mathscr{S}$}\label{sec:E_0}
	
	Let $(B,\msc{P})$ be a 2-dimensional integral affine manifold with singularities equipped with a regular polyhedral decomposition $\msc{P}$. Let $\bb{L}$ be a rank $r$ tropical Lagrangian multi-section of class $\mathscr{S}$ as in Definition \ref{def:proj_Lag}. The goal of this section is to construct a locally free sheaf $\cu{E}_0(\bb{L},{\bf{k}}_s)$ of rank $r$ on the scheme $X_0(B,\msc{P},s)$ associated to $\bb{L}$ equipped with some continuous data ${\bf{k}}_s$ which depends on the gluing data $s$.
	
	First of all, for any vertex $\{v\}\in\msc{P}$, we need to construct a locally free sheaf $\cu{E}(v)$ over the toric component $X_v \subset X_0(B,\msc{P},s)$. Let $\sigma_1,\dots,\sigma_l$ be the top dimensional cells that contain $v$ and $U_{\sigma_i}\subset X_v$ be the toric affine chart corresponding to $\sigma_i$. Then $\{U_{\sigma_i}\}$ forms an open covering of $X_v$.
	Suppose the vertex $v\notin S$. Then the preimage of $v$ consists of $r$ distinct points $v^{(\alpha)}$, for $k=1,\dots,r$. Hence we obtain $r$ piecewise linear functions $\varphi_{v^{(\alpha)}}:|\Sigma_{v^{(\alpha)}}|\to\bb{R}$, which correspond to $r$ toric line bundles $\cu{L}(v^{(\alpha)})$ over $X_v$. The transition function of $\cu{L}(v^{(\alpha)})$ on the overlap $U_v(\sigma_1)\cap U_v(\sigma_2)$ can be chosen as
	$$z^{m_k(\sigma_1')-m_k(\sigma_2')},$$
	where $m_k(\sigma')$ is the slope of $\varphi_{v^{(\alpha)}}$ on the maximal cell $\sigma'$. In this case, we put
	$$\cu{E}(v):=\bigoplus_{k=1}^r\cu{L}(v^{(\alpha)}).$$
	Now suppose $v\in S$. Then by our assumption, there are precisely three top dimensional cells $\sigma_0,\sigma_1,\sigma_2$ containing $v$. In this case, $X_v\cong\bb{P}^2$. Let $v'\in S'$ be the unique ramification point such that $\pi(v')=v$. In a neighborhood $U$ of $v$, write 
	$$\pi^{-1}(\sigma_i\cap U):=\left((\sigma_i^+\cup\sigma_i^-)\cap U'\right)\amalg\coprod_{k=1}^{r-2}\sigma_i^{(\alpha)}\cap U_k'$$
	with $\sigma_i^+\cap\sigma_i^-=\{v'\}$ being the ramification point. As $\varphi|_{U'}$ can be represented by $\varphi_{v'}$, by restricting to $\sigma_i^{\pm},\sigma_i^{(\alpha)}$, we obtain $r$ integral affine functions. Hence they define $r$ toric line bundles $\cu{L}_i^{\pm},\cu{L}_i^{(\alpha)}$, $k=1,\dots,r-2$, on the affine chart $U_i:=U_v(\sigma_i)$, where
	\begin{align*}
	\cu{L}_0^+=\,&\cu{O}((n-m)D_1)|_{U_0},\quad\cu{L}_0^-=\cu{O}((n-m)D_2)|_{U_0},\\
	\cu{L}_1^+=\,&\cu{O}(nD_0)|_{U_1},\quad\quad\quad\quad\cu{L}_1^-=\cu{O}((m-n)D_2+(2n-m)D_0)|_{U_1},\\
	\cu{L}_2^+=\,&\cu{O}(nD_0)|_{U_2},\quad\quad\quad\quad\cu{L}_2^-=\cu{O}((m-n)D_1+(2n-m)D_0)|_{U_2}.
	\end{align*}
	Here, $U_i\subset\bb{P}^2$ are the affine charts corresponding to the cones $K_{\sigma_i}$ and $D_k$ is the divisor corresponding to the ray $\rho_k$. In this case, we set
	$$\cu{E}_i:=(\cu{L}_i^+\oplus\cu{L}_i^-)\oplus\bigoplus_{k=1}^{r-2}\cu{L}_i^{(\alpha)},$$
	which is a rank $r$ vector bundle on $U_{\sigma_i}$.
	
	Next, we try to glue the bundles $\cu{E}_i$'s together. First of all, the line bundles $\{\cu{L}_i^{(\alpha)}\}_{i=0,1,2}$ naturally glue together to form a toric line bundle $\cu{L}(v_k')$ on $ X_v$ as before. 
	
	However, in the case $v\in S$, the rank 2 bundles $\{\cu{L}_i^+\oplus\cu{L}_i^-\}_{i=0,1,2}$ cannot be glued equivariantly. This is because when we try to glue $\cu{L}_i^{\pm}$ to $\cu{L}_j^{\mp}$ equivariantly, the gluing data consists of two diagonal matrices and one off-diagonal matrix, and so the cocycle condition fails to hold. More precisely, the equivariant structure on each $\cu{L}_i^+\oplus\cu{L}_i^-$ is given by the minus of the slopes of $\varphi_{v'}$ on $K_{\sigma_i}$:
	\begin{equation}\label{eqn:sf_equ_str}
	\begin{split}
	(\lambda_1,\lambda_2)\cdot(w_0^1,w_0^2,v^+_0,v^-_0) & = (\lambda_1w_0^1,\lambda_2w_0^2,\lambda_1^{m-n}v^+_0,\lambda_2^{m-n}v^-_0),
	\\(\lambda_1,\lambda_2)\cdot(w_1^0,w_1^2,v^-_1,v^+_1) & = (\lambda_1^{-1}w_1^0,\lambda_1^{-1}\lambda_2w_1^2,\lambda_1^{-n}v^-_1,\lambda_1^{-n}\lambda_2^{m-n}v^+_1),
	\\(\lambda_1,\lambda_2)\cdot(w_2^0,w_2^1,v^+_2,v^-_2) & = (\lambda_2^{-1}w_2^0,\lambda_1\lambda_2^{-1}w_2^1,\lambda_2^{-n}v_2^+,\lambda_1^{m-n}\lambda_2^{-n}v^-_2);
	\end{split}
	\end{equation}
	here $v_i^{\pm}$ are fiber coordinates of $\cu{L}_i^{\pm}$. According to the gluing of $|\Sigma_{v'}|\backslash\{v'\}$, one can write down the naive transition functions
	$$\tau^{sf}_{10}:=\begin{pmatrix}
	\frac{a_0}{(w_0^1)^m} & 0
	\\0 & \frac{b_0}{(w_0^1)^n}
	\end{pmatrix},
	\tau^{sf}_{21}:=\begin{pmatrix}
	\frac{b_1}{(w_1^2)^n} & 0
	\\0 & \frac{a_1}{(w_1^2)^m}
	\end{pmatrix},
	\tau^{sf}_{02}:=\begin{pmatrix}
	0 & \frac{b_2}{(w_2^0)^n}
	\\\frac{a_2}{(w_2^0)^m} & 0
	\end{pmatrix},$$
	where $w_i^j:=\frac{\zeta^j}{\zeta^i}$ are inhomogeneous coordinates of a point $[\zeta^0:\zeta^1:\zeta^2]\in\bb{P}^2$ and $a_i,b_i$ are arbitrary non-zero constants. It is clear that they do not compose to the identity.
	
	To correct the gluing, we introduce three automorphisms. For $m>n$, we consider
	\begin{align*}
	\Theta_{10}& := I+\begin{pmatrix}
	0 & 0
	\\-a_0b_1a_2\left(\frac{w^2_0}{w^1_0}\right)^{m-n} & 0
	\end{pmatrix}\in Aut\left(\cu{O}|_{U_{10}}\oplus\cu{O}|_{U_{10}}\right),
	\\\Theta_{21}&:=I+\begin{pmatrix}
	0 & -a_0a_1b_2\left(\frac{w^0_1}{w^2_1}\right)^{m-n}
	\\0  & 0
	\end{pmatrix}\in Aut\left(\cu{O}(mD_0)|_{U_{21}}\oplus\cu{O}(nD_0)|_{U_{21}}\right),
	\\\Theta_{02}&:=I+\begin{pmatrix}
	0 & 0
	\\-b_0a_1a_2\left(\frac{w^1_2}{w^0_2}\right)^{m-n} & 0
	\end{pmatrix}\in Aut\left(\cu{O}(mD_0)|_{U_{02}}\oplus\cu{O}(nD_0)|_{U_{02}}\right),
	\end{align*}
	while for $m<n$, we consider
	\begin{align*}
	\Theta_{10}&:=I+\begin{pmatrix}
	0 & -b_0a_1b_2\left(\frac{w^2_0}{w^1_0}\right)^{n-m}
	\\0 & 0
	\end{pmatrix}\in Aut\left(\cu{O}|_{U_{10}}\oplus\cu{O}|_{U_{10}}\right),
	\\\Theta_{21}&:=I+\begin{pmatrix}
	0 & 0
	\\-b_0b_1a_2\left(\frac{w^0_1}{w^2_1}\right)^{n-m}  & 0
	\end{pmatrix}\in Aut\left(\cu{O}(mD_0)|_{U_{21}}\oplus\cu{O}(nD_0)|_{U_{21}}\right),
	\\\Theta_{02}&:=I+\begin{pmatrix}
	0 & -a_0b_1b_2\left(\frac{w^1_2}{w^0_2}\right)^{n-m}
	\\0 & 0
	\end{pmatrix}\in Aut\left(\cu{O}(mD_0)|_{U_{02}}\oplus\cu{O}(nD_0)|_{U_{02}}\right).
	\end{align*}
	The factors $\Theta_{ij}$ are written in terms of the frame of $\cu{E}_j$ on $U_j$. Let
	$$\tau_{ij}:=\tau_{ij}^{sf}\Theta_{ij}.$$
	Then a straightforward computation gives the following
	\begin{proposition}\label{prop:monodromy_free}
		If we impose the condition that $\prod_ia_ib_i=-1$, then
		$$\tau_{02}\tau_{21}\tau_{10}=I.$$
		Moreover, the equivariant structure defined by (\ref{eqn:sf_equ_str}) can be extended.
	\end{proposition}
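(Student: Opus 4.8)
The plan is to establish the identity $\tau_{02}\tau_{21}\tau_{10}=I$ by a direct matrix computation, and then to read off from the same computation that the local torus actions in \eqref{eqn:sf_equ_str} are compatible with the $\tau_{ij}$'s, so that equivariant toric descent produces the global equivariant bundle $E_{m,n}$ on $\bb{P}^2$. Since the correction factors $\Theta_{ij}$ are placed in opposite off-diagonal slots in the two cases $m>n$ and $m<n$, I would treat these cases separately; they are mirror images of each other, so it suffices to spell out one of them, say $m>n$, and indicate the obvious modification for the other.

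For the cocycle identity, the first move is to express all three factors in a single affine chart. The transition functions compose on $U_0\cap U_1\cap U_2\subset\bb{P}^2$, so I would rewrite $\tau_{10}^{sf},\tau_{21}^{sf},\tau_{02}^{sf}$ and $\Theta_{10},\Theta_{21},\Theta_{02}$ in the coordinates $x:=w_0^1$, $y:=w_0^2$, using the standard relations $w_1^0=x^{-1}$, $w_1^2=y/x$, $w_2^0=y^{-1}$, $w_2^1=x/y$ on $\bb{P}^2$. Writing $d:=|m-n|$ and using the repeated collapses $n+d=m$ and $d-m=-n$, each $\tau_{ij}=\tau_{ij}^{sf}\Theta_{ij}$ becomes an explicit $2\times 2$ matrix of monomials in $x,y$ with coefficients built from the $a_i,b_i$: in the case $m>n$, $\tau_{10}$ is lower triangular, $\tau_{21}$ is upper triangular, and $\tau_{02}$ has exactly one zero entry. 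Multiplying out $\tau_{02}(\tau_{21}\tau_{10})$, every entry collapses to a single monomial in $x,y$ times a polynomial in $P:=\prod_i a_ib_i$: one finds that the $(1,1)$- and $(2,2)$-entries are the constants $1-(P+1)^2$ and $-P$ respectively (no monomial factor), while the two off-diagonal entries are monomials in $x,y$ times $(P+1)$. Hence the product is the identity exactly when $P=-1$, and conversely $P=-1$ is forced already by the $(2,2)$-entry; this gives the ``if and only if'' and, in particular, the sufficiency claimed in the Proposition. The case $m<n$ is handled identically after interchanging the roles of the $(1,2)$- and $(2,1)$-slots throughout.

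For the statement about the equivariant structure, I would first check that each $\Theta_{ij}$ is a genuine bundle automorphism over the whole overlap $U_{ij}$, not merely a rational one. Its only nontrivial entry is the monomial $(y/x)^{\pm d}$, regarded as a morphism between the two line-bundle summands of $\cu{E}_j$ whose difference is $\cu{O}(\pm d\,D_k)$ for the appropriate ray $\rho_k$; the divisor twists $\cu{O}(mD_0),\cu{O}(nD_0)$ recorded in the definitions of the $\Theta_{ij}$ are precisely what make this entry regular. Next I would match torus weights: under \eqref{eqn:sf_equ_str} the off-diagonal entry of $\Theta_{ij}$ interchanges two summands whose fiber weights differ by $(\lambda_2/\lambda_1)^{\pm d}$, and the monomial $(y/x)^{\pm d}$ transforms with exactly this character, so $\Theta_{ij}$ is equivariant. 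Since $\tau_{ij}^{sf}$ is equivariant by construction (it is the naive gluing respecting the toric data), so is $\tau_{ij}=\tau_{ij}^{sf}\Theta_{ij}$. Combined with the cocycle identity, the data $\bigl(\{\cu{E}_i\},\{\tau_{ij}\}\bigr)$ together with the local actions \eqref{eqn:sf_equ_str} descends to a rank $2$ equivariant vector bundle on $\bb{P}^2$; that is, the equivariant structure extends to a global one on $E_{m,n}$, which is the assertion.

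There is no conceptual obstacle here; the content is entirely the bookkeeping of exponents. The one genuinely fiddly point is to keep track of which frame each $\Theta_{ij}$ is written in (they are expressed in the frame of $\cu{E}_j$ on $U_j$) and to convert every $w_i^j$ consistently to the single chart with coordinates $x,y$ before multiplying; the regularity verification for the $\Theta_{ij}$ — i.e.\ that the twists appearing in the statement are exactly the right ones — is the other place where care is needed.
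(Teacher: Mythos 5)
Your proposal is correct and is essentially the paper's own (unwritten) argument: the paper simply asserts that a straightforward computation verifies the cocycle identity and the extension of the equivariant structure, and your chart-by-chart matrix multiplication (the diagonal entries coming out as $-P^2-2P$ and $-P$ with $P=\prod_i a_ib_i$, the off-diagonal entries carrying a factor $P+1$) together with the weight-matching check for $\tau^{sf}_{ij}$ and $\Theta_{ij}$ is exactly that computation. The added observation that $P=-1$ is also necessary is a correct bonus not claimed in the statement.
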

	
	Hence we obtain a rank 2 toric vector bundle $E_{m,n}$ on $X_v\cong\bb{P}^2$ (for $v \in S$). The mysterious constants $a_i,b_i$'s are indeed irrelevant to the holomorphic structure.
	
	\begin{lemma}\label{lem:indep}
		The holomorphic structure of $E_{m,n}$ is independent of $a_i,b_i$'s as long as $\prod_ia_ib_i=-1$.
	\end{lemma}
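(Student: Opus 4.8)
The plan is to write down an explicit holomorphic isomorphism relating the bundles attached to two admissible choices of the constants. Fix tuples $(a_i,b_i)$ and $(a_i',b_i')$ of nonzero scalars, both subject to $\prod_i a_ib_i=-1$, and let $E_{m,n}$, $E_{m,n}'$ be the corresponding rank $2$ bundles on $X_v\cong\bb{P}^2$, with transition data $\tau_{ij}=\tau_{ij}^{sf}\Theta_{ij}$ and $\tau_{ij}'=\tau_{ij}'^{sf}\Theta_{ij}'$ on the standard cover $\{U_0,U_1,U_2\}$. Since the line bundles $\cu{L}_i^{\pm}$ entering $\cu{E}_i=\cu{L}_i^+\oplus\cu{L}_i^-$ depend only on $m$ and $n$, both cocycles take values in the same sheaf $GL_2(\cu{O})$, and an isomorphism $E_{m,n}\xrightarrow{\sim}E_{m,n}'$ amounts to a triple $g_i\in GL_2(\cu{O}(U_i))$ with $g_i\tau_{ij}=\tau_{ij}'\,g_j$ for $(i,j)\in\{(1,0),(2,1),(0,2)\}$. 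I will search for $g_i$ of the simplest kind, namely constant diagonal matrices $g_i=\mathrm{diag}(c_i,d_i)$ with $c_i,d_i\in\bb{C}^{\times}$, which preserve the splitting of $\cu{E}_i$.

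First I would solve the ``semiflat'' part. Comparing the (anti)diagonal leading terms in $g_i\tau_{ij}^{sf}=\tau_{ij}'^{sf}\,g_j$ yields six multiplicative equations among the $c_\bullet,d_\bullet$, each equating a ratio of consecutive $c$'s (or $d$'s) to the ratio of a primed constant over its unprimed counterpart --- for instance $c_1/c_0=a_0'/a_0$, $d_1/d_0=b_0'/b_0$ from $\tau_{10}^{sf}$, and $d_0/c_2=a_2'/a_2$, $c_0/d_2=b_2'/b_2$ from $\tau_{02}^{sf}$ being antidiagonal (which reflects the fact that the branched double cover permutes the two sheets around $v'$). Multiplying all six equations around the loop forces the single compatibility relation $\prod_i a_i'b_i'=\prod_i a_ib_i$, which holds since both sides equal $-1$; the system is therefore consistent, and after normalizing $c_0=1$ it has a unique solution. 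This produces a candidate gauge $g=(g_0,g_1,g_2)$, and along the way we record the identities $g_i\tau_{ij}^{sf}=\tau_{ij}'^{sf}g_j$ for all three pairs.

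It then remains to verify the unipotent part. Using $g_i\tau_{ij}^{sf}=\tau_{ij}'^{sf}g_j$, the full cocycle relation $g_i\tau_{ij}=\tau_{ij}'g_j$ collapses to $\Theta_{ij}'=g_j\Theta_{ij}g_j^{-1}$. Each $\Theta_{ij}=I+N_{ij}$ has $N_{ij}$ supported in a single off-diagonal slot with constant coefficient a triple product such as $-a_0b_1a_2$ (for $m>n$), so conjugation by $g_j$ merely rescales that slot by a factor such as $d_0/c_0$. On the other hand the three semiflat equations $c_1/c_0=a_0'/a_0$, $c_2/c_1=b_1'/b_1$, $d_0/c_2=a_2'/a_2$ multiply to $d_0/c_0=a_0'b_1'a_2'/(a_0b_1a_2)$, which is exactly the ratio needed to turn the coefficient of $\Theta_{10}$ into that of $\Theta_{10}'$; the analogous bookkeeping handles $\Theta_{21}$, $\Theta_{02}$, and the $m<n$ case (where the off-diagonal entries sit in the opposite corner). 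Hence $g$ is an isomorphism $E_{m,n}\cong E_{m,n}'$, which proves the lemma. The main obstacle is precisely this last compatibility check: one must see that the triple products hard-wired into the $\Theta_{ij}$ --- which were themselves dictated by the cocycle condition of Proposition \ref{prop:monodromy_free} --- are exactly the ones produced by the semiflat gauge, and this is where the hypothesis $\prod_i a_ib_i=-1$ gets used a second time. Conceptually the independence is also transparent from the equivariant side, since the equivariant structure \eqref{eqn:sf_equ_str} does not involve the $a_i,b_i$ at all, so the equivariant --- hence the underlying holomorphic --- isomorphism type should already be determined by $m$ and $n$.
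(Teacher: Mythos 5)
Your proof is correct and follows essentially the same route as the paper: the paper also produces an isomorphism by constant diagonal gauge matrices on the three charts (it simply writes down explicit $f_i$ intertwining a general tuple with the normalized choice $a_i=-1$, $b_i=1$, using $\prod_i a_ib_i=-1$), whereas you solve for the gauge between two arbitrary admissible tuples and then verify the unipotent factors by conjugation. One small quibble: the hypothesis $\prod_i a_ib_i=-1$ is really only used once (for consistency of the semiflat system); the compatibility of the $\Theta_{ij}$-coefficients then follows automatically from chaining the semiflat ratios, so it is not a genuinely second use as you suggest.
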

	\begin{proof}
		We will only consider the case $m>n$; the proof for $n>m$ is similar. Let $\tau_{ij}'$'s be the transition functions of $E_{m,n}$ with $a_i=-1$, $b_i=1$, for all $i=0,1,2$. We define $f$ by
		\begin{align*}
		f|_{U_0}:=f_0:=& \begin{pmatrix}
		1 & 0
		\\0 & a_0b_1a_2
		\end{pmatrix},
		\\f|_{U_1}:=f_1:=& \begin{pmatrix}
		-a_0 & 0
		\\0 & -a_1^{-1}b_2^{-1}
		\end{pmatrix},
		\\f|_{U_2}:=f_2:=& \begin{pmatrix}
		a_0b_1 & 0
		\\0 & b_2^{-1}
		\end{pmatrix}.
		\end{align*}
		Using $\prod_ia_ib_i=-1$, one can check that
		$$\tau_{02}f_2=f_0\tau_{02}',\quad \tau_{21} f_1=f_2\tau_{21}',\quad \tau_{10}f_0=f_1\tau_{10}'.$$
		Hence $f$ defines an isomorphism.
	\end{proof}
	From now on, we will take $a_i=-1,b_i=1$, for all $i=0,1,2$. Then the corresponding transition functions are given by
	\begin{equation}\label{eqn:transition_function}
	\tau_{10}':=
	\begin{pmatrix}
	-\frac{1}{(w_0^1)^m} & 0
	\\-\frac{(w_0^2)^{m-n}}{(w_0^1)^m} & \frac{1}{(w_0^1)^n}
	\end{pmatrix},
	\tau_{21}':=
	\begin{pmatrix}
	\frac{1}{(w_1^2)^n} & -\frac{(w_1^0)^{m-n}}{(w_1^2)^m}
	\\0 & -\frac{1}{(w_1^2)^m}
	\end{pmatrix},
	\tau_{02}':=
	\begin{pmatrix}
	-\frac{(w_2^1)^{m-n}}{(w_2^0)^m} & \frac{1}{(w_2^0)^n} \\-\frac{1}{(w_2^0)^m} & 0
	\end{pmatrix},
	\end{equation}
	for $m>n$ and 
	$$\tau_{10}':=
	\begin{pmatrix}
	-\frac{1}{(w_0^1)^m} & -\frac{(w_0^2)^{n-m}}{(w_0^1)^n}
	\\0 & \frac{1}{(w_0^1)^n} 
	\end{pmatrix},
	\tau_{21}':=
	\begin{pmatrix}
	\frac{1}{(w_1^2)^n} & 0
	\\-\frac{(w_1^0)^{n-m}}{(w_1^2)^n} & -\frac{1}{(w_1^2)^m}
	\end{pmatrix},
	\tau_{02}':=
	\begin{pmatrix}
	0 & \frac{1}{(w_2^0)^n}
	\\-\frac{1}{(w_2^0)^m} & -\frac{(w_2^1)^{m-n}}{(w_2^0)^m} 
	\end{pmatrix},$$
	for $n>m$.
	\begin{proposition}\label{prop:dual_isom}
		For any $m,n\in\bb{Z}$ with $m\neq n$, we have $E_{m,n}\cong E_{n,m}$ and $E_{m,n}^*\cong E_{-m,-n}$.
	\end{proposition}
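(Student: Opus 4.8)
The plan is to deduce both isomorphisms from the explicit transition functions in \eqref{eqn:transition_function}, exploiting the two structural symmetries built into the local model $\varphi_{v'}$ of Section \ref{sec:local_model}; write $\varphi_{v'}^{(m,n)}$ for that function with parameters $(m,n)$. Since the normalisation $a_i=-1$, $b_i=1$ has been fixed in Section \ref{sec:E_0} there is no scaling freedom left, but, exactly as in the proof of Lemma \ref{lem:indep}, it is enough to produce constant (and possibly chart-permuting) matrices on the affine charts $U_0,U_1,U_2\subset\bb{P}^2$ intertwining the two gluing cocycles.

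For the duality statement, recall that if $E_{m,n}$ is glued by $\{\tau_{ij}'\}$ in the standard toric frames, then $E_{m,n}^{*}$ is glued by $\{(\tau_{ij}')^{-\mathrm{T}}\}$ in the dual frames. The only bookkeeping subtlety is that $m>n$ is equivalent to $-n>-m$, so that $E_{-m,-n}$ is governed by the $n>m$ formulas following \eqref{eqn:transition_function} with $(m,n)$ replaced by $(-m,-n)$; there every monomial $(w_i^j)^{-m}$ becomes $(w_i^j)^{m}$ and every exponent $n-m$ becomes $m-n$. Inverting and transposing each of $\tau_{10}',\tau_{21}',\tau_{02}'$ for $E_{m,n}$ then reproduces precisely the three transition functions of $E_{-m,-n}$; for instance on $U_{10}$ one obtains the matrix with first row $\big(-(w_0^1)^m,\ -(w_0^1)^n(w_0^2)^{m-n}\big)$ and second row $\big(0,\ (w_0^1)^n\big)$, which is exactly $\tau_{10}'$ for $E_{-m,-n}$. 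Hence the identity maps on the $U_i$ already assemble into an isomorphism $E_{m,n}^{*}\xrightarrow{\ \sim\ }E_{-m,-n}$. (Conceptually this reflects the identity $-\varphi_{v'}^{(m,n)}=\varphi_{v'}^{(-m,-n)}$ of piecewise linear functions together with relative duality for the branched cover, but the transition-function check is the cleanest rigorous route given the non-toric nature of $|\Sigma_{v'}|$.)

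For the symmetry $E_{m,n}\cong E_{n,m}$, I would argue through the local model. Let $\iota$ be the sheet-exchanging deck transformation of $\pi_*:|\Sigma_{v'}|\to|\Sigma_v|$, that is $K_{\sigma_i}^{+}\leftrightarrow K_{\sigma_i}^{-}$. A short piecewise linear computation shows that
\[
\iota^{*}\varphi_{v'}^{(m,n)}=\varphi_{v'}^{(n,m)}+(n-m)(\xi_1+\xi_2),
\]
where $\xi_1+\xi_2$ is the globally linear function on $|\Sigma_v|\cong\bb{R}^2$ attached to the character $(1,1)$, whose associated line bundle on $X_v\cong\bb{P}^2$ is trivial. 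Now the gluing construction of $\cu{E}(v')$ in Section \ref{sec:E_0} is visibly equivariant under relabelling the two sheets (this just reindexes the summands $\cu{L}_i^{+}\leftrightarrow\cu{L}_i^{-}$ and conjugates the transition matrices accordingly), and replacing $\varphi_{v'}$ by $\varphi_{v'}+\ell$ with $\ell$ globally linear only tensors $\cu{E}(v')$ by a line bundle pulled back from a character of the big torus, hence by $\cu{O}_{X_v}$. Combining the two displays gives $E_{m,n}\cong E_{n,m}$. Alternatively one can write down directly the matrices $g_i$ predicted by $\iota$ and the character twist and verify the three intertwining relations, exactly as in Lemma \ref{lem:indep}.

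The only genuine obstacle is the bookkeeping in the second part: because the monodromy of $\pi_*$ around $0$ runs through $K_{\sigma_0}^{\pm},K_{\sigma_1}^{\pm},K_{\sigma_2}^{\pm}$ as a single $6$-cycle rather than a product of transpositions, the sheet exchange $\iota$ is entangled with a cyclic relabelling of $U_0,U_1,U_2$, so one must check that the gluing-data construction is equivariant for this combined operation; this ultimately rests on the normalisation $\prod_i a_ib_i=-1$ of Proposition \ref{prop:monodromy_free}. Granting that, both verifications reduce to routine $2\times 2$ matrix algebra.
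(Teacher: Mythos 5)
Your duality argument is exactly the paper's: the paper simply notes $(\tau_{ij}^{m,n})^{-1}=(\tau_{ij}^{-m,-n})^t$ and concludes $E_{m,n}^*\cong E_{-m,-n}$, which is the computation you carried out on $U_{10}$ (and which indeed propagates to the other charts). For the symmetry $E_{m,n}\cong E_{n,m}$ you take a more conceptual route — the deck transformation $\iota$, the piecewise linear identity $\iota^*\varphi_{v'}^{(m,n)}=\varphi_{v'}^{(n,m)}+(n-m)(\xi_1+\xi_2)$, and triviality of the character twist — whereas the paper short-circuits all of this by exhibiting the constant intertwiner $J=\left(\begin{smallmatrix}0&-1\\1&0\end{smallmatrix}\right)$ and checking the three chart-by-chart relations $\tau_{10}^{m,n}J=J^{-1}\tau_{10}^{n,m}$, $\tau_{21}^{m,n}J^{-1}=J\tau_{21}^{n,m}$, $\tau_{02}^{m,n}J=J\tau_{02}^{n,m}$, exactly in the style of Lemma \ref{lem:indep}. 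Your PL identity is correct, and your route has the virtue of explaining \emph{why} such an intertwiner must exist; but be aware of two inaccuracies in how you finish it. First, the construction is not ``visibly equivariant'' under the bare summand swap: conjugating the cocycle \eqref{eqn:transition_function} by the permutation matrix $\left(\begin{smallmatrix}0&1\\1&0\end{smallmatrix}\right)$ on every chart fails by signs (this is precisely where the normalisation $a_i=-1$, $b_i=1$ with $\prod_i a_ib_i=-1$ and the asymmetry between the $m>n$ and $m<n$ wall-crossing factors $\Theta_{ij}$ bites), and the swap must be corrected to the sign-twisted matrices $J,J^{-1},J$ on $U_0,U_1,U_2$. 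Second, the deck transformation $\iota$ covers the identity of $|\Sigma_v|$, so it does \emph{not} induce any cyclic relabelling of the charts $U_0,U_1,U_2$; the only genuine issue is the sign bookkeeping just mentioned. Since you explicitly offer the fallback of writing down the chartwise constant matrices and verifying the intertwining relations — which is the paper's proof verbatim, and which does succeed — your proposal is essentially correct, with the first isomorphism argued by a different (more structural) mechanism and the second identical to the paper's.
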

	\begin{proof}
		Let $\{\tau_{ij}^{m,n}\}$ be the transition functions of $E_{m,n}$. Put
		$$J:=\begin{pmatrix}
		0 & -1
		\\1 & 0
		\end{pmatrix}.$$
		Then we have
		$$\tau_{10}^{m,n}J=J^{-1}\tau_{10}^{n,m},\quad \tau_{21}^{m,n}J^{-1}=J\tau_{21}^{n,m},\quad \tau_{02}^{m,n}J=J\tau_{02}^{n,m}.$$
		Hence $E_{m,n}\cong E_{n,m}$. It is immediate that $(\tau_{ij}^{m,n})^{-1}=(\tau_{ij}^{-m,-n})^t$. Thus $E_{m,n}^*\cong E_{-m,-n}$.
	\end{proof}

	Before constructing the sheaf $\cu{E}_0(\bb{L},{\bf{k}}_s)$, we first prove the simplicity of the rank 2 bundle $E_{m,n}$. This will be crucial in the study of smoothability of the pair $(X_0(B,\msc{P},s),\cu{E}_0(\bb{L},{\bf{k}}_s))$ in Section \ref{sec:simple_smooth}. Recall that a locally free sheaf $\cu{E}$ on a scheme $X$ is called \emph{simple} if $H^0(X,\cu{E}nd(\cu{E}))=\bb{C}$, or equivalently, $H^0(X,\cu{E}nd_0(\cu{E}))=0$ where $\cu{E}nd_0$ denotes the sheaf of traceless endomorphisms.
	
	For this purpose, we compute the Chern classes of $E_{m,n}$. The piecewise linear function $\varphi_{m,n}$ allows us to obtain the equivariant Chern classes as piecewise polynomial functions on $|\Sigma_{\bb{P}^2}|$ (see \cite[Section 3.2]{branched_cover_fan}) , namely,
	\begin{align*}
	c_1^{(\bb{C}^{\times})^2}(E_{m,n})=&\begin{cases}
	(n-m)(\xi_1+\xi_2) & \text{ on }\sigma_0,
	\\2n\xi_1+(n-m)\xi_2 & \text{ on }\sigma_1,
	\\(n-m)\xi_1+2n\xi_2 & \text{ on }\sigma_2.
	\end{cases}
	\\c_2^{(\bb{C}^{\times})^2}(E_{m,n})=&\begin{cases}
	(n-m)^2\xi_1\xi_2 & \text{ on }\sigma_0,
	\\n^2\xi_1^2+n(n-m)\xi_1\xi_2 & \text{ on }\sigma_1,
	\\n(n-m)\xi_1\xi_2+n^2\xi_2^2 & \text{ on }\sigma_2.
	\end{cases}
	\end{align*}
	Since the equivariant cohomology $H^{\bullet}_{(\bb{C}^{\times})^2}(\bb{P}^2;\bb{Z})$ of $\bb{P}^2$ is given by $\bb{Z}[t_0,t_1,t_2]/(t_0t_1t_2)$, where $t_i$'s are piecwewise linear functions on $|\Sigma_{\bb{P}^2}|$ so that $t_i(v_j)=\delta_{ij}$, and the forgetful map $H^{\bullet}_{(\bb{C}^{\times})^2}(\bb{P}^2;\bb{Z})\to H^{\bullet}(\bb{P}^2;\bb{Z})\cong\bb{Z}[H]/(H^3)$ is given by mapping all the $t_i$'s to the hyperplane class $H$, we have
	$$c(E_{m,n})=1+(m+n)H+(m^2+n^2-mn)H^2.$$
	
	\begin{proposition}\label{prop:simple}
		For all $m,n\in\bb{Z}$ with $m\neq n$, the bundle $E_{m,n}$ is stable and hence simple.
	\end{proposition}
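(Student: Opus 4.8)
The plan is to establish stability of $E_{m,n}$ on $\bb{P}^2$ with respect to the hyperplane polarization $H$; since stable bundles are simple, this will give the result. Because $\text{rank}(E_{m,n})=2$, a coherent subsheaf destabilizing $E_{m,n}$ can be replaced by its saturation, so it suffices to rule out any line subbundle $\cu{O}(k)\hookrightarrow E_{m,n}$ (more precisely, any rank-one reflexive subsheaf, which on $\bb{P}^2$ is automatically a line bundle) with $k$ violating the slope inequality. From the computation $c(E_{m,n}) = 1 + (m+n)H + (m^2+n^2-mn)H^2$ recorded above, we have $\mu(E_{m,n}) = \tfrac{1}{2}(m+n)$, so we must show that every line subbundle $\cu{O}(k) \subset E_{m,n}$ satisfies $k < \tfrac{1}{2}(m+n)$, i.e. $2k \leq m+n-1$.

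First I would normalize: twisting by $\cu{O}(-1)$ shifts $(m,n) \mapsto (m-1,n-1)$ and replaces $E_{m,n}$ by $E_{m-1,n-1}$ (this is visible from the $\varphi_{m,n}$ local model, or can be checked directly on the transition functions \eqref{eqn:transition_function}), so it is enough to show $H^0(\bb{P}^2, E_{m,n}) = 0$ whenever $m+n \leq 0$ — equivalently $H^0(\bb{P}^2, E_{m,n}(k)) = 0$ for $2k < m+n$ would follow by twisting, but the cleanest formulation is: a nonzero section of $E_{m,n}(-k)$ with $2k \geq m+n$ yields a destabilizing sub-line-bundle. Using Proposition \ref{prop:dual_isom} ($E_{m,n} \cong E_{n,m}$) we may assume $m > n$, and then by $E_{m,n}^* \cong E_{-m,-n}$ it suffices to treat the single borderline inequality. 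So the core computation is: \emph{compute $H^0(\bb{P}^2, E_{m,n})$ explicitly from the transition functions \eqref{eqn:transition_function} and show it vanishes once $m+n$ is at or below the critical value.} A global section is a triple of vector-valued Laurent polynomials $s_i$ on the charts $U_i$, regular there, satisfying $s_i = \tau_{ij}' s_j$; writing $s_i = (f_i, g_i)^t$ and unwinding the $2 \times 2$ relations — which are explicit monomial-times-unipotent matrices — reduces to a finite linear algebra problem over $\bb{C}[w_i^j]$ whose only solution is zero in the relevant range of $(m,n)$. Alternatively, and perhaps more cleanly, one can use the toric/equivariant structure: the $T$-equivariant sections are supported on the lattice points of the polytopes cut out by $\varphi_{m,n}$ on each cone, and one checks directly that no equivariant section exists, hence $H^0 = 0$.

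I expect the main obstacle to be the bookkeeping in the explicit section computation: one must carefully track which monomials $(w_i^j)^a$ remain regular after applying the off-diagonal unipotent factors $\Theta_{ij}$, and confirm that the coupling between the two summands $\cu{L}_i^+$ and $\cu{L}_i^-$ (which is exactly what the $\Theta_{ij}$ encode) forces both components to vanish — the bundle is genuinely indecomposable precisely because of this coupling, and that is what defeats the would-be destabilizing sub-line-bundle. A slicker route that avoids case analysis: combine the Chern class data with the fact that $E_{m,n}$ is a toric bundle. By Kaneyama's or Klyachko's classification, a toric rank-2 bundle on $\bb{P}^2$ is stable iff a short combinatorial condition on the filtrations (read off from $\varphi_{m,n}$, i.e. from the slopes $m, n$ on the various cones) holds; since the three flags attached to our bundle are in "general position" relative to each other (no common invariant line — otherwise the bundle would split toric-equivariantly, contradicting the non-splitting established via Proposition \ref{prop:monodromy_free}), stability follows. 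I would present the direct cohomology computation as the primary argument for self-containedness, with the toric classification as a remark.
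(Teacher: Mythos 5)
Your overall strategy --- prove stability by ruling out destabilizing sub-line-bundles, after the (correct) reductions via saturation, the twisting identity $E_{m,n}\otimes\mathcal{O}(-1)\cong E_{m-1,n-1}$ and Proposition \ref{prop:dual_isom}, so that everything comes down to $H^0(\bb{P}^2,E_{a,b})=0$ whenever $a+b\le 0$, $a\neq b$ --- is sound, but the proposal stops exactly where the proof has to start. The statements ``reduces to a finite linear algebra problem over $\bb{C}[w_i^j]$ whose only solution is zero in the relevant range'' and ``one checks directly that no equivariant section exists'' \emph{are} the entire mathematical content of the proposition, and neither check is carried out; as written this is a plan rather than a proof. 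Your fallback via the toric (Kaneyama/Klyachko) classification has a genuine flaw on top of being unfinished: you justify that the three filtration lines are in general position by saying that otherwise the bundle ``would split toric-equivariantly, contradicting the non-splitting established via Proposition \ref{prop:monodromy_free}.'' That proposition only verifies the cocycle condition $\tau_{02}\tau_{21}\tau_{10}=I$, i.e.\ that $E_{m,n}$ exists as a toric bundle; indecomposability is not established anywhere before Proposition \ref{prop:simple} --- it is essentially what is being proved --- so this appeal is circular. To make that route honest you would have to read the three lines off the explicit data \eqref{eqn:sf_equ_str} and \eqref{eqn:transition_function}, check that they are pairwise distinct, state the stability criterion precisely, and verify the resulting inequalities (for these filtrations they reduce to $|m-n|<2|m-n|$ when the test line is one of the three distinguished lines, and to $\sum_i a_i<\sum_i b_i$ for a generic line); ``no common invariant line'' by itself is not the right condition.

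For comparison, the paper's proof is entirely different and much shorter: it uses the Chern class computation you quote, notes the numerical identity recorded there ($c_2-4c_1^2=-3(m-n)^2$ in the paper's notation), observes that this quantity is negative and different from $-4$, and cites Schwarzenberger's numerical characterization of stability for rank 2 bundles on $\bb{P}^2$. Your approach, if the vanishing computation (or the Klyachko-type verification) were actually carried out, would be more self-contained and would reprove stability from the transition functions rather than quote it; in its present form, however, the key step is missing.
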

	\begin{proof}
		A straightforward calculation shows that we have the equality
		$$c_2-4c_1^2=-3(m-n)^2.$$
		Hence $c_2-4c_1^2<0$ and $c_2-4c_1^2\neq -4$. These conditions are equivalent to stability of rank 2 bundles on $\bb{P}^2$; see \cite{Schwarzenberger}.
	\end{proof}
	
	Let us go back to the construction of $\cu{E}_0(\bb{L},{\bf{k}}_s)$. The fan structure around $v\in S$ defines a toric isomorphism $ X_v\cong\bb{P}^2$. Denote the corresponding bundle on $ X_v$ by $\cu{E}(v')$ and put
	$$\cu{E}(v):=\cu{E}(v')\oplus\bigoplus_{k=1}^{r-2}\cu{L}(v^{(\alpha)}),$$
	which is a rank $r$ bundle on $\bb{P}^2$.
	In this way, the tropical Lagrangian multi-section $\bb{L}$ defines a locally free sheaf $\cu{E}(\sigma)$ on $ X_{\sigma}$ for each $\sigma\in\msc{P}$. 
	Now we fix a closed gluing data $s = (s_e)$ for the fan picture. In order to obtain a consistent gluing, we need to find a set of isomorphisms $\{h_{\tau_2\tau_1,s}:\cu{E}(\tau_2)\to F_s(e)^*\cu{E}(\tau_1)\}_{e:\tau_1\to\tau_2}$ such that
	\begin{equation}\label{eqn:cocycle_g_s}
	(F_s(e_2)^*h_{\tau_2\tau_1,s})\circ h_{\tau_3\tau_2,s}=h_{\tau_3\tau_1,s}
	\end{equation}
	for $e_1:\tau_1\to\tau_2,e_2:\tau_2\to\tau_3$ and $e_3:=e_2\circ e_1$.
	
	We first construct such data for the case $s=1$. Let $e:\{v\}\to\tau$. If $v\notin S$, then there is a natural inclusion $h_{\tau\{v\}}:\cu{E}(\tau)\to F(e)^*\cu{E}(v)$, given by
	$$h_{\tau\{v\}}|_{U_{\tau}(\sigma)}:1_{\sigma}^{(\alpha)}(\tau)\mapsto F(e)^*1_{\sigma}^{(\alpha)}(v)$$
	on the affine chart $U_{\tau}(\sigma)\subset X_{\tau}$. If $v\in S$, note that the transition function of $\cu{E}(v')$ on $U_v(\sigma_1\cap\sigma_2)$ as $$\tau_{\sigma_1\sigma_2}(v):1_v(\sigma_1^{(\alpha)})\mapsto\sum_{\alpha=1}^2\epsilon_{\sigma_1\sigma_2}^{(\alpha\beta)}z^{m_v(\sigma_1^{(\alpha)})-m_v(\sigma_2^{(\beta)})}1_v(\sigma_2^{(\beta)}),$$
	where $m_v(\sigma_i^{(\alpha)})$ is the slope of the local representative of $\varphi_{v'}$ on $\sigma_i^{(\alpha)}$ and $\epsilon_{\sigma_1\sigma_2}^{(\alpha\beta)}\in\{0,1,-1\}$ are prescribed as in (\ref{eqn:transition_function}). Let $e:\{v\}\to\tau$, with $\tau$ being an edge. Note that $\Theta_{ij}\equiv Id$ on the divisor $D_k$, for $i,j,k=0,1,2$ being distinct. The transition maps $F(e)^*\tau_{\sigma_1\sigma_2}(v)$ of $F(e)^*\cu{E}(v)$ is then given by
	$$F(e)^*1_v(\sigma_1^{(\alpha)})\mapsto \epsilon_{\sigma_1\sigma_2}^{(\alpha\alpha')}(v)z^{m_v(\sigma_1^{(\alpha)})-m_v(\sigma_2^{(\alpha')})}F(e)^*1_v(\sigma_2^{(\alpha')}),$$
	where $\alpha'$ is uniquely determined by the conditions $v'\notin\sigma_1^{(\alpha)}\cap\sigma_2^{(\alpha')}$ and $\pi(\sigma_2^{(\alpha')})=\sigma_2$. Note that $\epsilon_{\sigma_1\sigma_2}^{(\alpha\alpha')}(v)\neq 0$ for all $\alpha$. Since $X_{\tau}$ is covered by two charts $U_{\tau}(\sigma_1),U_{\tau}(\sigma_2)$, we can define
	$$H(e):\begin{cases}
	1_{\tau}(\sigma_1^{(\alpha)})\mapsto
	F(e)^*1_v(\sigma_1^{(\alpha)}) & \text{ on }U_{\tau}(\sigma_1),\\
	1_{\tau}(\sigma_2^{(\alpha)})\mapsto\epsilon_{\sigma_1\sigma_2}^{(\alpha\alpha')}(v)F(e)^*1_v(\sigma_2^{(\alpha')}) & \text{ on }U_{\tau}(\sigma_2).
	\end{cases}$$
	Then it is easy to see that
	$$H_s(e)|_{U_{\tau}(\sigma_2)}\circ \tau_{\sigma_1\sigma_2}(\tau)=F(e)^*\tau_{\sigma_1\sigma_2}(v)\circ H_s(e)|_{U_{\tau}(\sigma_1)}.$$
	Hence $H(e):\cu{E}(\tau)\to F(e)^*\cu{E}(v)$ defines an isomorphism. For $e_3:\{v\}\xrightarrow{e_1}\tau\xrightarrow{e_2}\sigma$, with $\sigma\in\msc{P}(2)$, we put
	\begin{align*}
	H(e_2):&\,1_{\sigma}(\sigma^{(\alpha)})\mapsto (\epsilon_{\sigma_1\sigma_2}^{(\alpha\alpha')}(v))^{-1}F(e_2)^*1_{\tau}(\sigma^{(\alpha)}),\\
	H(e_3):&\,1_{\sigma}(\sigma^{(\alpha)})\mapsto F(e_3)^*1_v(\sigma^{(\alpha')}).
	\end{align*}
	Note that $H(e_2)$ is well-defined because $\sigma_1,\sigma_2$ are uniquely determined by $\tau$ and $\epsilon_{\sigma_1\sigma_2}^{(\alpha\alpha')}(v)\neq 0$. It is clear that
	\begin{equation}\label{eqn:cocycle_h}
	H(e_3)=H(e_1)\circ H(e_2).
	\end{equation}
	
	Now we turn on the gluing data $s$. Since all bundles that we constructed are equivariant with respective to the torus of the corresponding toric strata, there is an isomorphism $\cu{E}(\tau_2)\cong s_e^*\cu{E}(\tau_2)$, for any $e:\tau_1\to\tau_2$. Therefore, we can compose $h_{\tau_2\tau_1}$ with this isomorphism to get $h_{\tau_2\tau_1,s}$. But the cocycle condition (\ref{eqn:cocycle_g_s}) may not be satisfied due to the non-trivial gluing data $s$. Explicitly, by using (\ref{eqn:cocycle_h}), the composition
	$$h_{\tau_3\tau_1,s}^{-1}\circ( F_s(e_2)^*h_{\tau_2\tau_1,s})\circ h_{\tau_3\tau_2,s}$$
	is given by
	$$1_{\sigma}^{(\alpha)}(\tau_3)\mapsto s_{\tau_1\tau_2\tau_3}(\sigma^{(\alpha)})^{-1}1_{\sigma}^{(\alpha)}(\tau_3),$$
	where
	$$s_{\tau_1\tau_2\tau_3}(\sigma^{(\alpha)}):=s_{e_2}(m_{\tau_3}(\sigma^{(\alpha)}))s_{e_1}(m_{\tau_2}(\sigma^{(\alpha)}))s_{e_3}(m_{\tau_3}(\sigma^{(\alpha)}))^{-1}.$$
	Here $e_1:\tau_1\to\tau_2,e_2:\tau_2\to\tau_3,e_3=e_2\circ e_1$ and $m_{\tau_i}(\sigma^{(\alpha)})$ is the slope of $\varphi_{\tau_i'}$ on the cone $S_{\tau_i'}(\sigma^{(\alpha)})\in\Sigma_{\tau_i'}$. As $F_s(e_3)=F_s(e_1)\circ F_s(e_2)$, we have
	$$s_{\tau_1\tau_2\tau_3}(\sigma_1^{(\alpha)})=s_{\tau_1\tau_2\tau_3}(\sigma_2^{(\beta)}).$$
	Hence the quantity $$s_{\tau_1'\tau_2'\tau_3'}:=s_{\tau_1\tau_2\tau_3}(\sigma^{(\alpha)})$$
	only depends on the lifts $\tau_1'\subset\tau_2'\subset\tau_3'$ that $\sigma'$ contains. Also, if we choose other local representatives of $\varphi'$, then
	$$\hat{s}_{\tau_1'\tau_2'\tau_3'}=s_{\tau_1'\tau_2'\tau_3'}\left(s_{e_1}(a(\tau_3'))s_{e_2}(a(\tau_2'))s_{e_3}(a(\tau_3'))^{-1}\right),$$
	where $a(\bullet')$ is the slope some affine function. Hence, for any $\tau_1'\subset\tau_2'\subset\tau_3'$, we obtain an element $s_{\tau_1'\tau_2'\tau_3'}\in\bb{C}^{\times}$, so that $(s_{\tau_1'\tau_2'\tau_3'})_{\tau_1'\subset\tau_2'\subset\tau_3'}$ gives a $\bb{C}^{\times}$-valued C\v{e}ch 2-cocycle with respect to the simplicial structure on $L$ induced by $\msc{P}_{\pi}$. This defines a cohomology class
	$$o_{\bb{L}}(s):=[(s_{\tau_1'\tau_2'\tau_3'})_{\tau_1'\subset\tau_2'\subset\tau_3'}]\in H^2(\cu{W}',\bb{C}^{\times}).$$
	It is also clear that $o_{\bb{L}}(s)$ only depends on the cohomology class $[s]$ of $s$ in $H^1(\cu{W},\cu{Q}_{\msc{P}}\otimes\bb{C}^{\times})$. As a result, we obtain the \emph{obstruction map} as a group homomorphism
	$$o_{\bb{L}}:H^1(\cu{W},\cu{Q}_{\msc{P}}\otimes\bb{C}^{\times})\to H^2(\cu{W}',\bb{C}^{\times}).$$

	\begin{theorem}\label{thm:existence}
		If $o_{\bb{L}}([s])=1$, then the rank $r$ locally free sheaves $\{\cu{E}(\sigma)\}_{\sigma\in\msc{P}}$ can be glued to a locally free sheaf of rank $r$ over $X_0(B,\msc{P},s)$.
	\end{theorem}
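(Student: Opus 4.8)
The plan is to reduce the statement to the descent formalism already used in \cite{GS1} to equip $X_0(B,\msc{P},s)$ with its structure sheaf and polarizing line bundles. Since $X_0(B,\msc{P},s)=\lim_{\longrightarrow}F_{S,s}$ is obtained by gluing the toric pieces $X_{\tau}\times S$ along the (possibly $s$-twisted) closed immersions $F_s(e)$, a rank $r$ locally free sheaf on $X_0(B,\msc{P},s)$ is the same datum as a collection of rank $r$ locally free sheaves $\{\cu{E}(\tau)\}_{\tau\in\msc{P}}$ together with isomorphisms $h_e\colon\cu{E}(\tau_2)\to F_s(e)^*\cu{E}(\tau_1)$, one for each $e\colon\tau_1\to\tau_2$, satisfying the cocycle condition \eqref{eqn:cocycle_g_s}. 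The sheaves $\cu{E}(\tau)$ have already been built from $\bb{L}$, and candidate isomorphisms $h_{\tau_2\tau_1,s}$ have been produced by composing the $s=1$ isomorphisms $H(e)$ (which satisfy \eqref{eqn:cocycle_h}) with the equivariant identifications $\cu{E}(\tau_2)\cong s_e^*\cu{E}(\tau_2)$. So everything reduces to correcting the $h_{\tau_2\tau_1,s}$ so that \eqref{eqn:cocycle_g_s} holds on the nose.

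By the computation preceding the theorem, the failure of \eqref{eqn:cocycle_g_s} for $\{h_{\tau_2\tau_1,s}\}$ is, on each frame component, the scalar $s_{\tau_1'\tau_2'\tau_3'}\in\bb{C}^{\times}$, which depends only on the chain of lifts $\tau_1'\subset\tau_2'\subset\tau_3'$ in $\msc{P}_{\pi}$ and which assembles into a \v{C}ech $2$-cocycle on the cover $\cu{W}'$ of $L$ representing $o_{\bb{L}}(s)$. The hypothesis $o_{\bb{L}}([s])=1$ says exactly that this cocycle is a coboundary, i.e. there is a \v{C}ech $1$-cochain $(\lambda_{\tau_1'\tau_2'})$ valued in $\bb{C}^{\times}$ whose differential is $(s_{\tau_1'\tau_2'\tau_3'})$. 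For $e\colon\tau_1\to\tau_2$ and each summand of $\cu{E}(\tau_2)$ labelled by a lift $\tau_2'$ of $\tau_2$ (recall $\cu{E}(\tau_2)$ splits as a direct sum of line bundles indexed by the sheets over $\tau_2$, together with one rank $2$ block $\cu{E}(v')$ at a branch vertex $v$ carrying the $2$-dimensional frame indexed by the two sheets through $v'$), there is a unique lift $\tau_1'\subset\tau_2'$ of $\tau_1$, which follows from the defining properties of $\msc{P}_{\pi}$ in Definition \ref{def:poly_decomp_cover}; I would rescale $h_{\tau_2\tau_1,s}$ on that summand by $\lambda_{\tau_1'\tau_2'}$ to obtain $\til{h}_{\tau_2\tau_1,s}$. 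Rescaling by constants preserves both equivariance and the isomorphism property, and changes the failure of \eqref{eqn:cocycle_g_s} exactly by the \v{C}ech differential of $(\lambda_{\tau_1'\tau_2'})$, so $\{\til{h}_{\tau_2\tau_1,s}\}$ satisfies \eqref{eqn:cocycle_g_s}.

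Finally, I would invoke descent for quasi-coherent sheaves along the closed gluings defining $X_0(B,\msc{P},s)=\lim_{\longrightarrow}F_{S,s}$, the same mechanism that produces $\cu{O}_{X_0(B,\msc{P},s)}$ and line bundles on $X_0(B,\msc{P},s)$ in \cite{GS1}: the compatible system $(\{\cu{E}(\tau)\},\{\til{h}_{\tau_2\tau_1,s}\})$ glues to a coherent sheaf $\cu{E}_0(\bb{L},{\bf{k}}_s)$ on $X_0(B,\msc{P},s)$, which is locally free of rank $r$ because each $\cu{E}(\tau)$ is. The auxiliary data ${\bf{k}}_s$ records the choices involved, principally the $1$-cochain $(\lambda_{\tau_1'\tau_2'})$ trivializing $o_{\bb{L}}(s)$.

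I expect the one genuinely delicate point to be the bookkeeping of the second paragraph: checking that the rescaling factor $\lambda_{\tau_1'\tau_2'}$ is well-defined on each frame component (this relies on uniqueness of the lift of a smaller cell inside a fixed lift of a larger cell), that it is compatible with the rank $2$ block $\cu{E}(v')$ appearing at branch vertices, and that it modifies the obstruction cocycle precisely by a \v{C}ech coboundary. All of this amounts to a careful unwinding of the definition of $s_{\tau_1'\tau_2'\tau_3'}$ rather than any new idea, so the heart of the matter is really the vanishing hypothesis $o_{\bb{L}}([s])=1$.
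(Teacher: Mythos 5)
Your proposal follows essentially the same route as the paper: use the hypothesis $o_{\bb{L}}([s])=1$ to write the obstruction cocycle $(s_{\tau_1'\tau_2'\tau_3'})$ as a \v{C}ech coboundary of a $\bb{C}^{\times}$-valued $1$-cochain indexed by chains of lifts $\tau_1'\subset\tau_2'$, rescale the candidate isomorphisms $h_{\tau_2\tau_1,s}$ componentwise by that cochain (the paper's $\til{h}_{\tau_1\tau_2,s}$ with constants $k_{\tau_1'\tau_2'}$, the lifts being pinned down by $\tau_1'\subset\tau_2'\subset\sigma^{(\alpha)}$ exactly as in your uniqueness-of-lift remark), verify the cocycle condition via \eqref{eqn:cocycle_h}, and glue by taking the direct limit. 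The argument and the bookkeeping you flag as the delicate point are precisely what the paper's proof does, so the proposal is correct and not materially different.
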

	\begin{proof}
		If $o_{\bb{L}}([s])=1$, write
		$$s_{\tau_1'\tau_2'\tau_3'}=k_{\tau_1'\tau_2'}k_{\tau_3'\tau_3'}k_{\tau_1'\tau_3'}^{-1}.$$
		We modify $h_{\tau_1\tau_2,s}$ by
		$$\til{h}_{\tau_1\tau_2,s}:1_{\sigma}^{(\alpha)}(\tau_2)\mapsto\pm k_{\tau_1'\tau_2'}s_e(m_{\tau_2}(\sigma^{(\alpha)}))^{-1}F_s(e_1)^*1_{\sigma}^{(\alpha)}(\tau_1),$$
		where $\pm$ is determined by $h_{\tau_1\tau_2}$ and $\tau_1',\tau_2'$ are determined by the condition $\tau_1'\subset\tau_2'\subset\sigma^{(\alpha)}$. By using (\ref{eqn:cocycle_h}), one can easily check that
		$$\til{h}_{\tau_3\tau_1,s}^{-1}\circ(F_s(e_2)^*\til{h}_{\tau_2\tau_1,s})\circ\til{h}_{\tau_3\tau_2,s}=Id.$$
		Thus we can take the direct limit $\displaystyle{\lim_{\longrightarrow}}\,\cu{E}(\sigma)$ to glue $\{\cu{E}_{\sigma}\}_{\sigma\in\msc{P}}$ together.
	\end{proof}
	
	\begin{definition}
		If $o_{\bb{L}}([s])=1$, with a choice of $\{k_{\tau_1'\tau_2'}\}_{\tau_1'\subset\tau_2'}\subset\bb{C}^{\times}$ as in Theorem \ref{thm:existence}, we define $\cu{E}_0(\bb{L},{\bf{k}}_s):=\displaystyle{\lim_{\longrightarrow}}\,\cu{E}(\sigma)$.
	\end{definition}
	
	\begin{remark}
		As in the case of ample line bundles (see \cite{GS1}, paragraph right after the proof of Theorem 2.34), different choices of the constants $\{k_{\tau_1'\tau_2'}\}_{\tau_1'\subset\tau_2'}\subset\bb{C}^{\times}$ in the proof of Theorem \ref{thm:existence} may produce different locally free sheaves, and the notation $\cu{E}_0(\bb{L},{\bf{k}}_s)$ is to indicate the dependence.
	\end{remark}
	
	\begin{remark}
		The obstruction map $o_{\bb{L}}$ is a higher rank analog of the obstruction map defined in \cite[Theorem 2.34]{GS1} via open gluing data. Since the space of open gluing data is embedded into the space of closed gluing data (see \cite[Proposition 2.32]{GS1}), we can restrict $o_{\bb{L}}$ to the space of open gluing data.
	\end{remark}

	\section{Simplicity and smoothability}\label{sec:simple_smooth}
	
	As before, we assume that $\dim(B)=2$ and the polyhedral decomposition $\msc{P}$ is elementary (see Definition \ref{def:elementary}). Consider a tropical Lagrangian multi-section $\bb{L}$ of class $\msc{S}_{m,n}$ (see Definition \ref{def:proj_Lag}) and assume that the domain $L$ of $\bb{L}$ is connected. These imply that, at an unramified vertex $v'\in L$, $\varphi_{v'}$ can be represented by the piecewise linear function
	$$\varphi_k:=\begin{cases}
	0 & \text{ on }K_{\sigma_0},\\
	k\xi_1 & \text{ on }K_{\sigma_1},
	\\k\xi_2 & \text{ on }K_{\sigma_2},
	\end{cases}$$
	for $k\in\{m,n\}$, which corresponds to the line bundle $\cu{O}_{\bb{P}^2}(k)$ on $\bb{P}^2$. We also choose a gluing data $s$ such that $o_{\bb{L}}([s])=1$.\footnote{Recall that we always assume that all closed gluing data for both the fan and cone pictures are induced by open gluing data for the fan picture.}
	
	The main theorem of \cite{GS11} says that $X_0(B,\msc{P},s)$ can be smoothed out to give a toric degeneration $p:\cu{X}\to \Delta = \text{Spec}(\bb{C}[[t]])$.
	Now we would like to smooth out the pair $(X_0(B,\msc{P},s),\cu{E}_0(\bb{L},{\bf{k}}_s))$. To simplify notations, we write $X_0:=X_0(B,\msc{P},s)$ and $\cu{E}_0:=\cu{E}_0(\bb{L},{\bf{k}}_s)$. 
	Our strategy is to apply \cite[Corollary 4.6]{CM_pair}. So we need to compute the cohomology group $H^2(X_0,\cu{E}nd_0(\cu{E}_0))$. Higher cohomologies are usually hard to compute. But fortunately, we are in the 2-dimensional case and $X_0(B,\msc{P},s)$ is a Calabi-Yau, so it uffices to compute $H^0
	(X_0,\cu{E}nd_0(\cu{E}_0))$. We will handle the $r=2$ case and the $r\geq 3$ case separately.
	
	\subsection{Simplicity and smoothing in rank 2}\label{sec:rk2}
	
	For any vertex $\{v\}\in\msc{P}$, let
	\begin{align*}
	\Pi_v:H^0(X_0,\cu{E}nd(\cu{E}_0))\to & H^0(X_v,\cu{E}nd(\cu{E}(v))),
	\\\Pi_v^0:H^0(X_0,\cu{E}nd_0(\cu{E}_0))\to & H^0(X_v,\cu{E}nd_0(\cu{E}(v)))
	\end{align*}
	be the restriction maps. By Proposition \ref{prop:simple}, we know that $\Pi_v^0\equiv 0$ for $v\in S$. So, to prove simplicity of $\cu{E}_0(\bb{L},{\bf{k}}_s)$, it remains to study $\Pi_v^0$ for $v\notin S$.
	
	\begin{definition}\label{def:subgraph rank 2}
		Define
		\begin{equation}
		\begin{split}
		G(\msc{P}) & := \bigcup_{\tau\in\msc{P}^{(0)}\cup\msc{P}^{(1)}}\tau,\\
		G_0(\bb{L}) & := \bigcup_{\tau\in\msc{P}^{(0)}\cup\msc{P}^{(1)}:\tau\cap S=\emptyset}\tau.
		\end{split}
		\end{equation}
		They are embedded graphs in $B$ with $G_0(\bb{L})\subset G(\msc{P})$.
	\end{definition}

	Note that $V(G_0(\bb{L}))\amalg S=V(G(\msc{P})$. Hence if $G_0(\bb{L})=\emptyset$, we have $\Pi_v^0\equiv 0$ for all $v\notin S$. However, if $G_0(\bb{L})\neq\emptyset$, the group $H^0(X_v,\cu{E}nd_0(\cu{E}(v)))$ never vanishes for $v\notin S$ since $\cu{E}(v)\cong\cu{O}_{\bb{P}^2}(n)\oplus\cu{O}_{\bb{P}^2}(m)$.
	
	\begin{definition}\label{def:minimal cycle}
		A 1-cycle $\gamma\subset G_0(\bb{L})$ is called a \emph{minimal cycle} if there exists a 2-cell $\sigma\in\msc{P}$ such that $\partial\sigma=\gamma$.
	\end{definition}
	
	
	
	We now focus on the case $m=n+1$.
	\begin{definition}\label{def:simple}
		A tropical Lagrangian multi-section $\bb{L}\in\msc{S}_{n+1,n}$ is called \emph{simple} if $G_0(\bb{L})$ has no minimal cycles.
	\end{definition}
	
	For example, when $G_0(\bb{L})$ is a disjoint union of trees, $\bb{L}$ is always simple.
	
	
	\begin{theorem}\label{thm:simple}
		$\bb{L}\in\msc{S}_{n+1,n}$ is simple if and only if the locally free sheaf $\cu{E}_0(\bb{L},{\bf{k}}_s)$ is simple.
	\end{theorem}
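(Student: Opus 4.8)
The plan is to work entirely on the level of global endomorphisms, using the canonical identification $H^0(X_0,\cu{E}nd(\cu{E}_0)) \cong H^0(P(\bb{L}),\cu{F})$ sketched in the introduction, but in the rank $2$ case one can argue more concretely by gluing local data. First I would fix a global endomorphism $\Phi \in H^0(X_0,\cu{E}nd_0(\cu{E}_0))$ and study its restrictions $\Phi_v := \Pi_v^0(\Phi) \in H^0(X_v,\cu{E}nd_0(\cu{E}(v)))$. By Proposition \ref{prop:simple} (simplicity, indeed stability, of $E_{m,n}$) we already know $\Phi_v = 0$ whenever $v \in S$; so all the content lies at the unramified vertices, where $\cu{E}(v) \cong \cu{O}_{\bb{P}^2}(n)\oplus\cu{O}_{\bb{P}^2}(m)$ with $m = n+1$. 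Since $\cu{E}nd_0(\cu{O}(n)\oplus\cu{O}(n+1))$ has summands $\cu{O},\cu{O}(-1),\cu{O}(1),\cu{O}$, the only nonzero cohomology comes from the $\cu{O}(1)$-summand, so $\Phi_v$ is determined by an off-diagonal entry $s_v \in H^0(\bb{P}^2,\cu{O}(1))$ together with a constant on the two diagonal $\cu{O}$-summands; tracelessness forces the two diagonal constants to be negatives of each other, say $\pm\lambda_v$.

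The next step is to understand the compatibility of these local pieces along the edges of $\msc{P}$. Each edge $\tau \in \msc{P}^{(1)}$ corresponds to a boundary divisor $X_\tau \cong \bb{P}^1$ sitting inside two adjacent toric surfaces $X_{v_1}, X_{v_2}$, and the gluing isomorphisms $\til h_{\tau v_i,s}$ from Theorem \ref{thm:existence} identify the restrictions $\Phi_{v_1}|_{X_\tau}$ and $\Phi_{v_2}|_{X_\tau}$ up to the (diagonal) transition data. The key observation — which is the higher-rank analogue of the rank-one positivity arguments in \cite{GHK11} — is that an off-diagonal section $s_v$ of $\cu{O}(1)$ on $X_v \cong \bb{P}^2$ vanishes on exactly one of the three boundary $\bb{P}^1$'s (the divisor $D_0$ in the notation around \eqref{eqn:transition_function}, determined by the slope conventions) and is nonvanishing on the other two. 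Tracking which edges carry a ``nonzero off-diagonal direction'' then produces, for each vertex $v$ where $s_v \neq 0$, a distinguished edge at $v$; and the requirement that these local data glue consistently across $G_0(\bb{L})$ forces the union of these distinguished edges to form a $1$-cycle. Because $\msc{P}$ is elementary, the only $1$-cycles in $G_0(\bb{L})$ that can close up in this way are the boundaries of $2$-cells, i.e. minimal cycles. This gives the implication: if $\cu{E}_0(\bb{L},{\bf{k}}_s)$ is not simple, i.e. some $\Phi$ has a nonzero off-diagonal part somewhere, then $G_0(\bb{L})$ contains a minimal cycle.

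For the converse, given a minimal cycle $\gamma = \partial\sigma$ in $G_0(\bb{L})$, I would construct a nonzero traceless endomorphism explicitly: on each vertex $v \in \gamma$ take $\Phi_v$ to be the off-diagonal map $\cu{O}(n) \to \cu{O}(n+1)$ given by the section of $\cu{O}(1)$ cutting out the appropriate boundary divisor of $X_v$ (the one lying along $\sigma$), and on all vertices not on $\gamma$ set $\Phi_v = 0$; then check, using the explicit transition functions \eqref{eqn:transition_function} and the fact that $\Theta_{ij} \equiv \mathrm{Id}$ on the boundary divisors, that these glue to a genuine global section of $\cu{E}nd_0(\cu{E}_0)$. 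The main obstacle is the bookkeeping in this gluing verification: one must check that the off-diagonal sections chosen at consecutive vertices of $\gamma$ restrict compatibly on the shared $\bb{P}^1$'s under the twisted gluing data $s$ and the sign conventions $\pm$ in $\til h_{\tau_1\tau_2,s}$, and that the two diagonal $\cu{O}$-constants can be chosen (necessarily $0$, since a nonzero constant would have to propagate to a vertex in $S$ where $\Phi$ must vanish) — this is where the hypothesis $m = n+1$ is essential, as for $m \geq n+2$ the off-diagonal bundle $\cu{O}(m-n)$ has a higher-dimensional space of sections and the simple edge-by-edge argument breaks down. Once both directions are in place, simplicity of $\cu{E}_0(\bb{L},{\bf{k}}_s)$ is equivalent to $\Phi$ being forced to be a scalar, which by the above is equivalent to the absence of minimal cycles in $G_0(\bb{L})$.
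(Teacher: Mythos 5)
Your overall architecture (restrict to toric components, kill the diagonal part by propagating to a branch vertex where $E_{m,n}$ is simple, then analyze the off-diagonal $\cu{O}(1)$-part; conversely build a traceless endomorphism supported on a minimal cycle and extend by zero) matches the paper's proof, but both directions contain genuine gaps. In the forward direction your ``key observation'' is false: a section of $\cu{O}_{\bb{P}^2}(1)$ is an arbitrary linear form, which in general vanishes identically on \emph{no} boundary $\bb{P}^1$ (and even a coordinate form vanishes at points of the other two divisors), so there is no canonically ``distinguished edge'' at each vertex, and the claim that the resulting edges close up into a cycle which, by elementariness, must bound a $2$-cell is unsubstantiated ($G_0(\bb{L})$ can perfectly well contain long non-minimal cycles). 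The paper's argument avoids this entirely: since $a_v\in H^0(\cu{O}_{\bb{P}^2}(1))$ is nonzero, it is nonzero at some torus fixed point $X_\sigma\subset X_v$; the corresponding $2$-cell $\sigma\ni v$ is the candidate, and continuity of the global section at the shared fixed point forces $\Pi^0_{v'}(A)\neq0$ for every vertex $v'$ of $\sigma$, hence (by simplicity of $\cu{E}(v')$ at branch points) no vertex of $\sigma$ lies in $S$, so $\partial\sigma$ is a minimal cycle in $G_0(\bb{L})$.

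In the converse direction you have dismissed as ``bookkeeping'' exactly the step where minimality is used. Choosing at each $v\in V(\gamma)$ the section $s_v$ of $\cu{O}(1)$ vanishing on the divisor dual to the \emph{half-edge} at $v$ (not ``the one lying along $\sigma$''; vanishing on the half-edge divisors is what lets you extend by zero and makes the restrictions to the shared $\bb{P}^1$'s proportional), the pullbacks $F_s(e)^*s_v$ and $F_s(e')^*s_{v'}$ to $X_\tau$ agree only up to nonzero constants $\lambda_{vv'}\in\bb{C}^\times$. These constants define a rank-one local system on $\gamma\cong S^1$, and the sections can be rescaled to glue if and only if its monodromy is trivial — a genuine obstruction depending on the gluing data $s$ and ${\bf k}_s$, not a sign convention to be checked edge by edge. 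The paper kills this monodromy precisely by using that $\gamma=\partial\sigma$: the local system extends over the $2$-cell $\sigma$ (via the maps $F_s(f)$ to the deepest stratum $X_\sigma$), and since $\sigma$ is contractible the extended system is trivial, yielding constants $c_v$ with $\lambda_{vv'}=c_{v'}/c_v$. Without this step your construction need not produce a global section, so the converse as you state it is incomplete.
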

	\begin{proof}
		Let $A$ be a non-zero section of $\cu{E}nd_0(\cu{E}_0(\bb{L},{\bf{k}}_s))$. Then there exists $v\in G_0(\bb{L})$ such that $\Pi_v^0(A)\neq 0$, $X_v \cong \mathbb{P}^2$ and $\cu{E}(v)\cong\cu{O}_{\bb{P}^2}(n) \oplus\cu{O}_{\bb{P}^2}(n+1)$. By reordering, we may assume $\Pi_v^0(A)$ is of the form
		$$\begin{pmatrix}
		\lambda_v & a_v\\
		0 & -\lambda_v
		\end{pmatrix},$$
		where $a_v$ is a section of $\cu{O}_{\bb{P}^2}(1)$ and $\lambda_v\in\bb{C}$. Since $L$ is connected, there exists a path in $G(\msc{P})$ connecting $v$ to some branch vertex $v'$. Since $\lambda_v$ is a constant, we must have $\lambda_v=0$ by simplicity of $\cu{E}(v')$ and continuity of the global section $A$. So we can write
		$$
		\Pi_v^0(A) = \begin{pmatrix}
		0 & a_v\\
		0 & 0
		\end{pmatrix},
		$$
		Since $a_v$ is a non-zero section of $\cu{O}_{\bb{P}^2}(1)$, there exists at least one vertex $\check{\sigma}\in\check{\msc{P}}$ such that $a_v$ is non-zero at the torus fixed point $X_{\sigma}\subset X_v$. Consider the minimal cycle $\gamma:=\partial\sigma$. We need to show that $\gamma$ lies inside $G_0(\bb{L})$. To see this, suppose there exists some vertex $v'\in V(\gamma)$ such that $\Pi_{v'}^0(A)=0$. Then by continuity, $a_v(A)$ must vanish at the torus fixed point $X_{\sigma}$ because $\check{v}$ and $\check{v}'$ share the common vertex $\check{\sigma}$ (see Figure \ref{fig:cycle6} below). Thus $\Pi_{v'}^0(A)\neq 0$ for all $v'\in V(\sigma)$, and hence $v'$ cannot be a branch vertex. In particular, $\gamma\subset G_0(\bb{L})$.
		\begin{figure}[ht]
			\includegraphics[width=40mm]{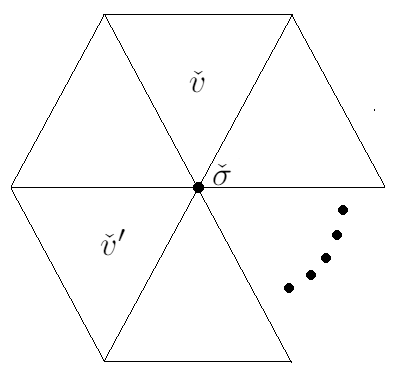}
			\caption{The dual of the minimal cycle $\gamma$.}
			\label{fig:cycle6}
		\end{figure}
		
		Conversely, suppose $G_0(\bb{L})$ has a minimal cycle $\gamma$. Let
		$$X_{\gamma}:=\bigcup_{v\in V(\gamma)} X_v$$
		and $\cu{E}_{\check{\gamma}}:=\cu{E}_0(\bb{L},{\bf{k}}_s)|_{ X_{\gamma}}$. We want to construct a non-zero section $A$ of $\cu{E}nd_0(\cu{E}_{\check{\gamma}})\to X_{\gamma}$ by gluing non-zero sections $\{s_{\check{v}}\}_{v\in V(\gamma)}$ of $\cu{O}(1)\to X_v$, $v\in V(\gamma)$, which has vanishing order 1 along the boundary divisors of $ X_{\gamma}$. To do this, we first define a cover $\{U_v\}_{v\in V(\gamma)}$ of $\gamma$ by
		$$U_v:=\{v\}\cup\bigcup_{\tau\in E(\gamma):v\in\tau}\text{Int}(\tau).$$
		Let $\{s_v\}_{v\in V(\gamma)}$ be sections of $\cu{O}(1)$ which vanish along the divisor correspond to the half-edges\footnote{Let $G$ be a graph and $H$ be a subgraph of $G$. An edge $\tau\in E(G)$ is called a \emph{half-edge} of $H$ if $\tau\notin E(H)$ and $\tau\cap H\neq\emptyset$.} of $\gamma$. Then for any $\tau\in E(\gamma)$ with vertices $e:\{v\}\to\tau,e':\{v'\}\to\tau$, $F_s(e)^*s_v$ and $F_s(e')^*s_{v'}$ can be regarded as non-zero holomorphic sections of $\cu{O}_{X_{\tau}}(1)\to X_{\tau}$. Thus
		$$\lambda_{vv'}:=\frac{F_s(e)^*s_v}{F_s(e')^*s_{v'}}$$
		is a meromorphic function on $ X_{\tau}$. But $\lambda_{vv'}$ has no zeros and poles, so $\lambda_{vv'}\in\bb{C}^{\times}$. Hence the data $\cu{L}:=(\{U_v\},\{\bb{C}\inner{s_v}\},\{\lambda_{vv'}\})$ defines a rank 1 local system on $\gamma\cong S^1$. Let $\sigma$ be such that $\partial\sigma=\gamma$. We extend $\cu{L}$ as follows: Cover $\sigma$ by
		$$U_v^{\sigma}:=U_v\cup \text{Int}(\sigma).$$
		If $U_v^{\sigma}\cap U_{v'}^{\sigma}\neq\emptyset$, then
		$$U_v^{\sigma}\cap U_{v'}^{\sigma}=\begin{cases}
		\text{Int}(\tau)\cup \text{Int}(\sigma) & \text{ if }U_v\cap U_{v'} = \text{Int}(\tau),
		\\\text{Int}(\sigma) & \text{ otherwise}.
		\end{cases}$$
		We define
		$$\til{\lambda}_{vv'}:=\begin{cases}
		\frac{F_s(e)^*s_v}{F_s(e')^*s_{v'}} \text{ if }U_v\cap U_{v'}=\text{Int}(\tau),
		\\\frac{F_s(f)^*s_v}{F_s(f')^*s_{v'}} \text{ otherwise},
		\end{cases},$$
		where $f:\{v\}\to\sigma,f':\{v'\}\to\sigma$. Clearly, $\til{\lambda}_{vv'}\in\bb{C}^{\times}$. If we let $h:\tau\to\sigma$, since $F_s(h)\circ F_s(e)=F_s(f)$, which is independent of $\tau$, the cocycle condition is satisfied. Thus we get a local system $\til{\cu{L}}\to\sigma$ extending $\cu{L}\to\gamma$. Since $\sigma$ is contractible, $\til{\cu{L}}$, and hence $\cu{L}$, are trivial as local systems. Therefore, we can find $\{c_v\}\subset\bb{C}^{\times}$ such that $\lambda_{vv'}=c_{v'}/c_v$. By definition, we have
		$$F_s(e)^*(c_vs_v)=F_s(e')^*(c_{v'}s_{v'})$$
		for all $\tau$ with vertices $v,v'$. Thus we obtain a section $A$ of $\cu{E}nd_0(\cu{E}_{\check{\gamma}})\to X_{\gamma}$ which vanishes along the boundary divisor of $ X_{\gamma}$. Extend $A$ by zero to the other toric components, we see that $\cu{E}nd_0(\cu{E}_0(\bb{L},{\bf{k}}_s))$ has a non-zero section.
	\end{proof}
	
	\begin{corollary}\label{cor:h2=0}
		$\bb{L}\in\msc{S}_{n+1,n}$ is simple if and only if $H^2(X_0(B,\msc{P},s),\cu{E}nd_0(\cu{E}_0(\bb{L},{\bf{k}}_s)))=0$ for any choice of ${\bf{k}}_s$.
	\end{corollary}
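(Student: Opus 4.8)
The plan is to combine Theorem \ref{thm:simple} with Serre duality on the Calabi--Yau surface $X_0 := X_0(B,\msc{P},s)$. By Theorem \ref{thm:simple}, $\bb{L}\in\msc{S}_{n+1,n}$ is simple precisely when the locally free sheaf $\cu{E}_0(\bb{L},{\bf{k}}_s)$ is simple, i.e. when $H^0(X_0,\cu{E}nd_0(\cu{E}_0(\bb{L},{\bf{k}}_s)))=0$; in particular this vanishing is already independent of the choice of ${\bf{k}}_s$. Thus it suffices to prove that, for each fixed ${\bf{k}}_s$,
\[
H^0(X_0,\cu{E}nd_0(\cu{E}_0(\bb{L},{\bf{k}}_s)))=0 \quad\Longleftrightarrow\quad H^2(X_0,\cu{E}nd_0(\cu{E}_0(\bb{L},{\bf{k}}_s)))=0 .
\]

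Write $\cu{F}:=\cu{E}nd_0(\cu{E}_0(\bb{L},{\bf{k}}_s))$, which is locally free since $\cu{E}_0(\bb{L},{\bf{k}}_s)$ is. I would use three inputs: (i) $X_0$ is a projective Gorenstein surface whose dualizing sheaf is trivial, $\omega_{X_0}\cong\cu{O}_{X_0}$ --- this is the (log) Calabi--Yau property of the central fibre in the Gross--Siebert construction when $(B,\msc{P})$ is positive and simple, cf. \cite{GS1}; (ii) Serre duality holds on $X_0$ in the form $H^i(X_0,\cu{F})^{*}\cong H^{2-i}(X_0,\cu{F}^{\vee}\otimes\omega_{X_0})$ for locally free $\cu{F}$, since $X_0$ is Cohen--Macaulay and projective; and (iii) the trace pairing $(A,B)\mapsto\operatorname{tr}(AB)$ on traceless endomorphisms is fibrewise non-degenerate over $\bb{C}$ (the trace form on $\mathfrak{sl}_r(\bb{C})$ is non-degenerate), hence induces an isomorphism $\cu{F}^{\vee}\cong\cu{F}$. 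Combining these,
\[
H^2(X_0,\cu{F})^{*} \;\cong\; H^0(X_0,\cu{F}^{\vee}\otimes\omega_{X_0}) \;\cong\; H^0(X_0,\cu{F}),
\]
so $H^2(X_0,\cu{F})=0$ if and only if $H^0(X_0,\cu{F})=0$. Together with Theorem \ref{thm:simple} and the ${\bf{k}}_s$-independence noted above, this yields the stated equivalence.

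The only point requiring genuine care is inputs (i) and (ii): one must make sure Serre duality is available for the possibly non-normal, reducible scheme $X_0$ with $\omega_{X_0}\cong\cu{O}_{X_0}$. This holds because $X_0$ is glued from smooth toric surfaces along toric boundary divisors meeting transversally, so it is Gorenstein (in particular Cohen--Macaulay), and the defining condition of a Calabi--Yau central fibre in \cite{GS1} forces the dualizing sheaf to be trivial; one may then invoke Serre duality for projective Cohen--Macaulay schemes. Everything else --- the self-duality of $\cu{E}nd_0$ via the trace form and the bookkeeping with ${\bf{k}}_s$ --- is formal.
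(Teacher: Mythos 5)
Your argument is correct and is essentially the paper's own proof: the paper likewise deduces the equivalence from Serre duality on the projective Gorenstein surface $X_0(B,\msc{P},s)$ together with triviality of the canonical sheaf from the Calabi--Yau condition, combined with Theorem \ref{thm:simple}. Your additional remarks (self-duality of $\cu{E}nd_0$ via the trace pairing and the ${\bf{k}}_s$-independence) just make explicit what the paper leaves implicit.
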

	\begin{proof}
		Since $ X_0(B,\msc{P},s)$ has Gorenstein singularities, Serre duality holds. The canonical sheaf is trivial by the Calabi-Yau condition.
	\end{proof}

	Because of Corollary \ref{cor:h2=0}, we can apply the smoothing result of \cite{CM_pair} to obtain the following
	
	\begin{corollary}\label{cor:smoothable}
		If $\bb{L}\in\msc{S}_{n+1,n}$ is simple, then the pair $( X_0(B,\msc{P},s),\cu{E}_0(\bb{L},{\bf{k}}_s))$ is smoothable for any choice of ${\bf{k}}_s$.
	\end{corollary}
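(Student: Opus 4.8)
The plan is to reduce this to the smoothing criterion for pairs in \cite[Corollary 4.6]{CM_pair}, invoked together with the cohomological input already assembled above. Concretely, I would run the chain of implications
$$\bb{L}\ \text{simple}\ \Longrightarrow\ \cu{E}_0(\bb{L},{\bf{k}}_s)\ \text{simple}\ \Longrightarrow\ H^2(X_0,\cu{E}nd_0(\cu{E}_0))=0\ \Longrightarrow\ (X_0,\cu{E}_0)\ \text{smoothable},$$
where the first implication is Theorem \ref{thm:simple}, the second is Corollary \ref{cor:h2=0}, and the last is the content of \cite[Corollary 4.6]{CM_pair}. Here I abbreviate $X_0:=X_0(B,\msc{P},s)$ and $\cu{E}_0:=\cu{E}_0(\bb{L},{\bf{k}}_s)$.

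First I would recall that, since $(B,\msc{P})$ is positive and simple, \cite{GS11} already produces a formal polarized toric degeneration $p:\cu{X}\to\Delta=\text{Spec}(\bb{C}[[t]])$ whose central fibre is $X_0$; moreover $X_0$ carries a log structure that is log smooth off a codimension-$2$ locus $Z$, which is exactly the input needed to run the gluing-of-dgBV-algebras machinery underlying \cite{CLM_smoothing,CM_pair}. Since the branch locus $S\subset B^{(0)}$ is contained in the vertices while $Z$ lies over the interiors of edges, the prescribed local models $\varphi_{m,n}$ used to build $\cu{E}_0$ are supported away from $Z$; I would check that $\cu{E}_0$ consequently satisfies the local-triviality and compatibility hypotheses required to apply \cite[Corollary 4.6]{CM_pair} to the pair $(X_0,\cu{E}_0)$. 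That corollary asserts that $\cu{E}_0$ extends to a locally free sheaf on the total space $\cu{X}$ provided the obstruction group $H^2(X_0,\cu{E}nd_0(\cu{E}_0))$ vanishes; the traceless endomorphisms enter because the determinant line bundle deforms together with the polarization, so only the trace-free directions can be obstructed.

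Next I would establish the vanishing $H^2(X_0,\cu{E}nd_0(\cu{E}_0))=0$, which is immediate from Corollary \ref{cor:h2=0}: $X_0$ is a Gorenstein Calabi--Yau surface, so Serre duality gives $H^2(X_0,\cu{E}nd_0(\cu{E}_0))\cong H^0(X_0,\cu{E}nd_0(\cu{E}_0))^{\vee}$ (the dualizing sheaf being trivial), and the right-hand side vanishes exactly when $\cu{E}_0$ is simple. By Theorem \ref{thm:simple}, simplicity of $\cu{E}_0$ is equivalent to simplicity of $\bb{L}\in\msc{S}_{n+1,n}$, i.e. to the absence of minimal cycles in $G_0(\bb{L})$ (Definition \ref{def:simple}). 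Since no step in this argument depends on the choice of ${\bf{k}}_s$, the conclusion holds for every such choice; feeding the vanishing back into \cite[Corollary 4.6]{CM_pair} then yields smoothability of the pair.

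The step I expect to be the main obstacle is not any single implication in the chain --- each is short given the cited results --- but rather the bookkeeping needed to confirm that the hypotheses of \cite[Corollary 4.6]{CM_pair} are genuinely met: one must verify that $X_0$ together with its Gross--Siebert log structure and the locally free sheaf $\cu{E}_0$ fit the (maximally degenerate, log-smooth-off-$Z$) setting of \cite{CLM_smoothing,CM_pair}, and that the relevant obstruction space there is indeed the trace-free $H^2(X_0,\cu{E}nd_0(\cu{E}_0))$ rather than the full $H^2(X_0,\cu{E}nd(\cu{E}_0))$. Once this is in place the corollary follows formally.
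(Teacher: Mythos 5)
Your proposal is correct and follows essentially the same route as the paper: simplicity of $\bb{L}$ gives simplicity of $\cu{E}_0(\bb{L},{\bf{k}}_s)$ by Theorem \ref{thm:simple}, Serre duality on the Gorenstein Calabi--Yau surface $X_0(B,\msc{P},s)$ then gives $H^2(X_0,\cu{E}nd_0(\cu{E}_0))=0$ as in Corollary \ref{cor:h2=0}, and the smoothing result \cite[Corollary 4.6]{CM_pair} concludes. The paper treats this as an immediate consequence of Corollary \ref{cor:h2=0} and the cited result, exactly as you do.
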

	
	\begin{remark}
		The case when $m\geq n+2$ is actually much easier. By choosing sections of $\cu{O}_{\bb{P}^2}(m-n)$ which vanish only along boundary divisors, it is not hard to see that
		\begin{itemize}
			\item when $m=n+2$, $\cu{E}_0(\bb{L},{\bf{k}}_s)$ is simple if and only if $G_0(\bb{L})$ is a collection of vertices in $B\backslash S$;
			\item when $m \geq n+3$, $\cu{E}_0(\bb{L},{\bf{k}}_s)$ is simple if and only if $G_0(\bb{L})=\emptyset$.
		\end{itemize}
	\end{remark}
	
	
	
	It is not hard to construct such simple tropical Lagrangian multi-sections.
	First, to obtain a suitable pair $(B, \msc{P})$, we can start with an arbitrary pair $(\check{B},\check{\msc{P}}')$. By refining the polyhedral decomposition, we may assume that $(\check{B},\check{\msc{P}}')$ is already simple from the beginning. By further subdividing into elementary simplices and taking a discrete Legendre transform, we get an integral affine manifold $B$ with singularities equipped with a simple polyhedral decomposition $\msc{P}$.
	
	To construct a rank 2 tropical Lagrangian multi-section, we label the vertices in $\msc{P}$ so that
	\begin{enumerate}
		\item the total number of labelled vertices is even, and
		\item unlabelled vertices do not bound any minimal cycles.
	\end{enumerate}
	Then one can easily construct a closed topological surface $L$ and a branched covering $\pi:L\to B$ so that all the labelled vertices are branch points. The genus of $L$ is determined by the Riemann-Hurwitz formula:
	$$g(L)=\frac{\#\{\text{labelled vertices}\}}{2}-1$$
	Then we lift the polyhedral decomposition $\msc{P}$ to $L$, and try to glue our local model at each labelled vertex with $m=n+1$. Obviously, there are combinatorial obstructions for doing so, but the following condition is sufficient to guarantee that this can be done:
	\begin{itemize}
		\item [(E)] Every maximal cell has even number of branch vertices.
	\end{itemize}
	Under this condition, we obtain a simple tropical Lagrangian multi-section over $(B,\msc{P})$ of class $\msc{S}_{n,n+1}$. A couple of concrete examples are now in order.
	
	\begin{example}\label{eg:1}
		Let $\Xi$ be a simplex centered at origin and with affine length 5 on each edge and $\check{B}:=\partial\Xi$. Equip $\check{B}$ with the affine structure given in \cite[Example 1.18]{GS1}. This affine structure is integral because $\Xi$ is reflexive up to scaling. The proper faces of $\Xi$ give the standard polyhedral decomposition $\check{\msc{P}}_{std}$ on $\check{B}$. This decomposition is not simple but we can subdivide it into smaller triangles so that it becomes simple. More precisely, we may consider the subdivision shown in the following figure (where we have unfolded $\check{B}$).
		\begin{figure}[ht]
			\includegraphics[width=45mm]{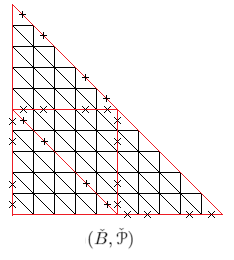}
			\caption{The red lines indicate the original standard polyhedral decomposition $\check{\msc{P}}_{std}$ and the black lines are its subdivision $\check{\msc{P}}$. The crosses denote the locations of the 24 affine singularities.}
			\label{fig:simple_simplex}
		\end{figure}
		Then we put the 24 affine singular points on distinct red edges that have affine length 1 so that the decomposition $\msc{P}$ is simple. Choose any strictly convex multi-valued piecewise linear function and let $(B,\msc{P})$ be the discrete Legendre transform of $(\check{B},\check{\msc{P}})$, which is also simple. Now we look at the following two configurations.
		\begin{figure}[ht]
			\includegraphics[width=100mm]{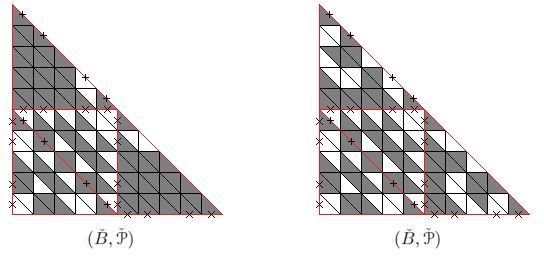}
			\caption{Shaded cells correspond to branch vertices on $(B,\msc{P})$.}
			\label{fig:simple_simplex}
		\end{figure}
		On the left (resp. right), we have 74 (resp. 58) branch points and there is a 2-fold branched covering map from a genus 36 (resp. 28) surface to $B$. Equip each ramification point with the local model defined before. As each maximal cell has even number of branched points in both configurations, we obtain two tropical Lagrangian multi-section $\bb{L}_1,\bb{L}_2\in\msc{S}_{m,n}$ over $(B,\msc{P})$. Clearly, there are no minimal cycles and therefore, when $m=n+1$, $\cu{E}_0(\bb{L}_i,{\bf{k}}_s)$ are simple for all ${\bf{k}}_s$ and $i=1,2$.
	\end{example}
	
	\begin{example}\label{eg:2}
		Here we consider the boundary $\check{B}$ of a cube centered at origin with affine length 2 on each edge. Equip it with the affine structure and polyhedral decomposition $\check{\msc{P}}_{std}$ introduced \cite[Example 1.18]{GS1}. Since $\check{B}$ is reflexive, this affine structure is integral. We subdivide $\check{\msc{P}}_{std}$ by elementary simplices as shown in Figure \ref{fig:simple_cube} (where, again, we have unfolded $\check{B}$ for a better visualization).
		This gives a simple polyhedral decomposition $\check{\msc{P}}$. Again, choose any strictly convex multi-valued piecewise linear function on $(\check{B},\check{\msc{P}})$ and define $(B,\msc{P})$ as the discrete Legendre transform of $(\check{B},\check{\msc{P}})$.
		\begin{figure}[ht]
			\includegraphics[width=50mm]{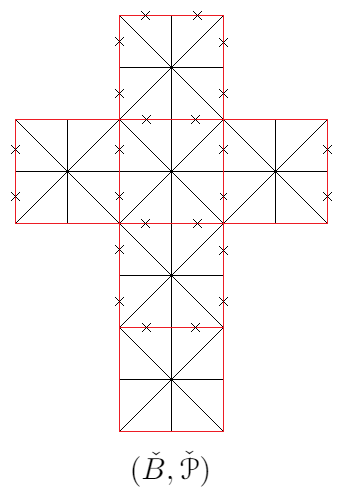}
			\caption{}
			\label{fig:simple_cube}
		\end{figure}
		
		Note that each maximal cell of $(B,\msc{P})$ has an even number of vertices. Hence, by taking a cover branched over all vertices, we obtain a tropical Lagrangian multi-section $\bb{L}\in\msc{S}_{m,n}$ of rank 2, whose domain is a genus 23 surface and where $m,n$ can be arbitrary distinct integers. Since all vertices are branch points, $\bb{L}$ must be simple.
		
		For a less obvious example, consider the configuration shown in Figure \ref{fig:O(1)}.
		\begin{figure}[H]
			\includegraphics[width=50mm]{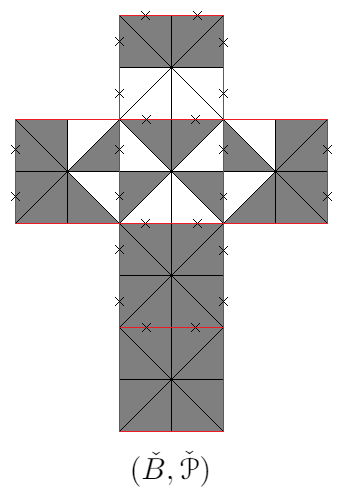}
			\caption{}
			\label{fig:O(1)}
		\end{figure}
		This produces a 2-fold branched covering map from a genus 17 surface to $B$, branched over the 36 vertices which correspond to the shaded maximal cells. By putting our local model at each ramification point, we get a tropical Lagrangian multi-section $\bb{L}\in\msc{S}_{m,n}$ without minimal cycles. Hence when $m=n+1$, the resulting locally free sheaf $\cu{E}_0(\bb{L},{\bf{k}}_s)$ is simple for all ${\bf{k}}_s$.
	\end{example}

	\subsection{Towards general rank}\label{sec:higher rank}
	
	As before, $B$ is a 2-dimensional integral affine manifold with singularities but now $\msc{P}$ can be \emph{any} polyhedral decomposition. Let $\bb{L}=(L,\msc{P}',\pi,\varphi)$ be a tropical Lagrangian multi-section of the following particular type:
	
	\begin{definition}\label{def:class_C}
		A tropical Lagrangian multi-section $\bb{L}$ of rank $r$ is said to be \emph{of class $\cu{C}$} if, for any $v\in S$, there exists a neighborhood $W_{v'}\subset L$ around $v$ such that $\pi|_{W_{v'}}:W_{v'}\to W_v$ is conjugate to the $r$-fold map $z\mapsto z^r$ on $\bb{C}$ and the slopes of the local representative $\varphi_{v'}$ satisfies
		\begin{enumerate}
			\item $m(\omega^{(\alpha)})\neq m(\omega^{(\beta)})$ for all $\omega\in\msc{P}$ and $\alpha\neq\beta$ and
			\item $m_{v'}(\sigma^{(\alpha)})-m_{v'}(\sigma^{(\beta)})\notin K_v(\sigma)^{\vee}\cap\cu{Q}_v^*$, for all $\sigma\in\msc{P}_{max}$ and $\alpha\neq\beta$.
		\end{enumerate}
	\end{definition}
	
	Suppose that Assumption \ref{assumption} holds, namely, for each branch vertex $v \in S$, there exists a rank $r$ toric vector bundle $\cu{E}(v)$ over $X_v$ whose cone complex satisfies the slope conditions in Definition \ref{def:class_C}. It can then be shown that the slope conditions in Definition \ref{def:class_C} imply that $\cu{E}(v)$ is simple \cite{Suen_preparation}. 
	Furthermore, a similar argument as in Section \ref{sec:E_0} gives an obstruction map $o_{\bb{L}}$ so that, when $o_{\bb{L}}([s])=1$ and $\bb{L}\in\cu{C}$, the collection of vector bundles $\{\cu{E}(v)\}_{v\in\msc{P}(0)}$ can be glued together. Since $\cu{E}(v)$ is simple for $v\in S$ and $\cu{E}(v)$ is a direct sum of line bundles for $v \not\in S$, we are in a situation very similar to the rank 2 case. So we can apply the same technique to study simplicity and smoothability.
	
	First of all, on $X_v$ where $v\notin S$, $\cu{E}(v)$ is given by a direct sum of line bundles
	$$\bigoplus_{\alpha=1}^r\cu{L}_{\varphi_{v^{(\alpha)}}},$$
	where $\varphi_{v^{(\alpha)}}$ is a local representative of $\varphi$ around the vertex $v^{(\alpha)}$. Thus, $\cu{E}nd(\cu{E}(v))$ is given by
	$$\bigoplus_{\alpha,\beta=1}^r\cu{L}_{\varphi_{v^{(\alpha)}}}^*\otimes\cu{L}_{\varphi_{v^{(\beta)}}}=\bigoplus_{\alpha,\beta=1}^r\cu{L}_{\varphi_{v^{(\beta)}}-\varphi_{v^{(\alpha)}}},$$
	So non-zero sections of $\cu{E}nd(\cu{E}(v))$ are basically sections of $\cu{L}_{\varphi_{v^{(\beta)}}-\varphi_{v^{(\alpha)}}}$.
	
	We now consider the fiber product
	$$P(\bb{L}):=L\times_BL.$$
	The polyhedral decomposition $\msc{P}'$ on $L$ induces a polyhedral decomposition on $P(\bb{L})$, namely,
	$$\til{\msc{P}}:=\{\sigma^{(\alpha)}\times_{\sigma}\sigma^{(\beta)}\,|\,\pi(\sigma^{(\alpha)})=\pi(\sigma^{(\beta)})=\sigma\}.$$
	Let $\til{G}(\bb{L})\subset P(\bb{L})$ be the graph given by the union of all edges in $\til{\msc{P}}$. 
	\begin{definition}\label{def:subgraph higher rank}
		We define the subgraph $\til{G}_0(\bb{L}) \subset \til{G}(\bb{L})$ by requiring:
		\begin{itemize}
			\item a vertex $(v^{(\alpha)},v^{(\beta)})$ of $\til{G}(\bb{L})$ is a vertex of $\til{G}_0(\bb{L})$ if and only if $v\notin S$ and the Newton polytope of $\varphi_{v^{(\beta)}}-\varphi_{v^{(\alpha)}}$ is non-empty, and
			\item an edge of $\til{G}(\bb{L})$ is an edge of $\til{G}_0(\bb{L})$ if and only if its vertices are in $\til{G}_0(\bb{L})$.
		\end{itemize}
	\end{definition}	
	For $\til{v}=(v^{(\alpha)},v^{(\beta)})\in \til{G}_0(\bb{L})$, we write $\varphi_{v^{(\beta)}}-\varphi_{v^{(\alpha)}}$ as $\varphi_{\til{v}}$. Also, as in Definition \ref{def:minimal cycle}, a 1-cycle $\til{\gamma}\subset \til{G}_0(\bb{L})$ is called a \emph{minimal cycle} if there exists a 2-cell $\til{\sigma}\in\til{\msc{P}}$ such that $\partial\til{\sigma}=\til{\gamma}$.
	
	\begin{example}\label{eg:3}
		Let us give a rank 3 example for which we have $\til{G}_0(\bb{L})=\emptyset$. Let $(B,\msc{P})$ be obtained as in Example \ref{eg:2}. Take a genus 22 closed surface $L$. There is a 3-fold covering map $\pi:L\to B$ branched over $B$ at all 48 vertices. At each branch point $v\in S$, we can use the local model that correspond to the toric vector bundle given in \cite[Example 4.2]{sections_toric_vb}. Namely, we consider the function $\varphi:|\Sigma'|\cong N_{\bb{R}}\to|\Sigma_{\bb{P}^2}|$, given by gluing
		\begin{align*}
		\varphi|_{\sigma_0^{(1)}}= & -\xi_1-2\xi_2, & \quad & \varphi|_{\sigma_1^{(1)}}= -3\xi_2, & \quad & \varphi|_{\sigma_2^{(1)}}=0,\\
		\varphi|_{\sigma_0^{(2)}}= & -2\xi_1, & \quad & \varphi|_{\sigma_1^{(2)}}=-\xi_1-\xi_2, & \quad & \varphi|_{\sigma_2^{(2)}}=-\xi_1+3\xi_2,\\
		\varphi|_{\sigma_0^{(3)}}=& -2\xi_1+3\xi_2, & \quad & \varphi|_{\sigma_1^{(3)}}=4\xi_1-3\xi_2, & \quad & \varphi|_{\sigma_2^{(3)}}=4\xi_1-2\xi_2,
		\end{align*}
		on each maximal cone $\sigma_i^{(\alpha)}\in\Sigma'(2)$. Each ramification point is of degree 3 by the Riemann-Hurwitz formula. Hence, $\pi$ locally looks like $z\mapsto z^3$ around each ramification point. In this case, it is also possible to put such local model at every vertex to obtain a tropical Lagrangian multi-section $\bb{L}$ of rank $3$ over $(B,\msc{P})$. Again, if $o_{\bb{L}}([s])=1$, we obtain a locally free sheaf $\cu{E}_0$ on $X_0$. Since $\cu{E}_0$ is simple on each maximal toric stratum, we have $\til{G}_0(\bb{L})=\emptyset$ and thus $\cu{E}_0$ is simple and the pair $(X_0, \cu{E}_0)$ is smoothable.
	\end{example}
	
	When $\til{G}_0(\bb{L})$ is non-empty, we have the following theorem, whose proof is similar to that of the ``only if'' part of Theorem \ref{thm:simple}:
	
	\begin{theorem}\label{thm:general_simple}
		Let $\bb{L}\in\cu{C}$ such that Assumption \ref{assumption} holds.
		Suppose that $\til{G}_0(\bb{L})$ has no minimal cycles, 
		and that the line bundle $\cu{L}(\til{v})$ associated to any $\til{v}\in\til{G}_0(\bb{L})$ admits a section $s_{\til{v}}\in H^0(X_v,\cu{L}(\til{v}))$ such that $s_{\til{v}}(p_{\til{v}})\neq 0$ for some torus fixed point $p_{\til{v}}\in X_v$.
		Then $\cu{E}_0(\bb{L},{\bf{k}}_s)$ is simple and hence the pair $(X_0(B,\msc{P},s),\cu{E}_0(\bb{L},{\bf{k}}_s))$ is smoothable for all choices of ${\bf{k}}_{s}$.
	\end{theorem}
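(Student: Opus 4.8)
The plan is to run the argument of the ``only if'' direction of Theorem \ref{thm:simple}, now keeping track of the full $r\times r$ block structure of endomorphisms over the non-branch vertices. Write $X_0:=X_0(B,\msc{P},s)$ and $\cu{E}_0:=\cu{E}_0(\bb{L},{\bf{k}}_s)$, and suppose for contradiction that $\cu{E}_0$ is not simple, so there is a nonzero global traceless endomorphism $A\in H^0(X_0,\cu{E}nd_0(\cu{E}_0))$. Over a branch vertex $v\in S$, Assumption \ref{assumption} together with the slope conditions of Definition \ref{def:class_C} (cf. \cite{Suen_preparation}) give that $\cu{E}(v)$ is simple, so $H^0(X_v,\cu{E}nd(\cu{E}(v)))=\bb{C}$ and the traceless restriction $A|_{X_v}$ vanishes. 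Over a non-branch vertex $v\notin S$ we have $\cu{E}(v)=\bigoplus_{\alpha=1}^r\cu{L}_{\varphi_{v^{(\alpha)}}}$, so $A|_{X_v}$ is an $r\times r$ matrix whose $(\alpha,\beta)$ entry $a_v^{(\alpha\beta)}$ is a section of $\cu{L}_{\varphi_{v^{(\beta)}}-\varphi_{v^{(\alpha)}}}$ on the complete toric surface $X_v$; in particular each diagonal entry is a global regular function, hence a constant.

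First I would show that the diagonal of $A$ vanishes everywhere. Since $L$ is connected, every $v\notin S$ is joined in the $1$-skeleton $G(\msc{P})$ to a branch vertex; along each edge $\tau$ the restriction $A|_{X_\tau}$ (with $X_\tau\cong\bb{P}^1$) is a matrix whose diagonal entries are again constants and must match the diagonal constants read off from both adjacent vertices, the sheet labels being matched by continuation in $L$. Transporting along a path to a branch vertex, where $A$ already vanishes, forces $a_v^{(\alpha\alpha)}=0$ for every $v\notin S$ and every $\alpha$. Hence $A|_{X_v}$ is strictly off-diagonal for all $v\notin S$, and since $A\neq 0$ there is some $v\notin S$ and $\alpha\neq\beta$ with $a_v^{(\alpha\beta)}\neq 0$; as $\cu{L}_{\varphi_{v^{(\beta)}}-\varphi_{v^{(\alpha)}}}$ then has a nonzero section, the vertex $\til{v}=(v^{(\alpha)},v^{(\beta)})$ lies in $\til{G}_0(\bb{L})$ by Definition \ref{def:subgraph higher rank}.

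Next I would locate a torus fixed point at which $A$ is nonvanishing; this is the step where the section hypothesis is needed. Unlike the rank $2$ case, where $a_v$ was a section of $\cu{O}_{\bb{P}^2}(1)$ and therefore automatically nonzero at one of the three torus fixed points, here $\cu{L}(\til{v})$ is a difference of piecewise linear functions and typically not nef, so a nonzero section could a priori be supported entirely on non-vertex lattice points of its Newton polytope. Using that $\cu{L}(\til{v})$ carries a section nonvanishing at some torus fixed point $p_{\til v}=p_\sigma$ with $\sigma\in\msc{P}^{(2)}$ a $2$-cell containing $v$, together with the vanishing that $a_v^{(\alpha\beta)}$ inherits along the divisors of $X_v$ meeting branch vertices (forced by $A|_{X_{v_0}}=0$ for $v_0\in S$), one concludes that $A$ itself is nonzero at some such $p_\sigma$; relabel so that $a_v^{(\alpha\beta)}(p_\sigma)\neq 0$. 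Then I would run the minimal-cycle argument: set $\gamma:=\partial\sigma$ and walk around its vertices $w\in V(\sigma)$. The sheets $v^{(\alpha)},v^{(\beta)}$ continue to sheets $w^{(\alpha')},w^{(\beta')}$, and since $p_\sigma$ lies on every $1$-stratum $X_\tau$ with $\tau$ an edge of $\sigma$, compatibility of $A$ gives $a_w^{(\alpha'\beta')}(p_\sigma)\neq 0$. Thus each such $w$ lies outside $S$ (otherwise $A|_{X_w}=0$) and $\til{w}=(w^{(\alpha')},w^{(\beta')})\in\til{G}_0(\bb{L})$; going once around $\gamma$ produces a $1$-cycle $\til{\gamma}\subset\til{G}_0(\bb{L})$ bounding the $2$-cell $\sigma^{(\alpha)}\times_\sigma\sigma^{(\beta)}\in\til{\msc{P}}$, i.e.\ a minimal cycle, contradicting the hypothesis. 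Hence $\cu{E}_0$ is simple.

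Finally, since $X_0$ is a Gorenstein Calabi--Yau surface, Serre duality gives $H^2(X_0,\cu{E}nd_0(\cu{E}_0))\cong H^0(X_0,\cu{E}nd_0(\cu{E}_0))^\vee=0$, so \cite[Corollary 4.6]{CM_pair} applies and the pair $(X_0,\cu{E}_0)$ is smoothable for every choice of ${\bf{k}}_s$. The main obstacle is the step of anchoring a nonzero global endomorphism at a torus fixed point: one has the specific section $a_v^{(\alpha\beta)}$ coming from $A$, not an arbitrary section, and it is precisely the interplay between the assumed ``good'' section $s_{\til v}$ and the boundary vanishing imposed by the branch vertices that must be used to rule out the ``purely interior'' case --- this is the only place where the rank $2$ proof got away with nothing but the triviality of the relevant $\bb{P}^1$'s and of $\cu{O}_{\bb{P}^2}(1)$, and where genuinely new bookkeeping is required.
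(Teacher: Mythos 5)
Your proposal follows essentially the same route as the paper's proof: kill the diagonal of $A$ by propagating the constant diagonal entries along the connected $L$ to the branch vertices, where simplicity of $\cu{E}(v)$ forces vanishing; pick a nonzero off-diagonal component $a_{\til{v}}$ (so $\til{v}\in\til{G}_0(\bb{L})$); anchor it at a torus fixed point $p_{\til{v}}$ corresponding to a maximal cell $\sigma\ni v$; and lift $\sigma$ to the cell $\til{\sigma}\in\til{\msc{P}}$ containing $\til{v}$ to exhibit $\partial\til{\sigma}$ as a minimal cycle in $\til{G}_0(\bb{L})$, contradicting the hypothesis. The step you single out as needing ``genuinely new bookkeeping''---passing from the assumption that $\cu{L}(\til{v})$ \emph{admits} a section nonvanishing at a fixed point to the nonvanishing of the \emph{specific} component $a_{\til{v}}$ at a fixed point---is precisely the point the paper itself dispatches in one line (``By assumption, there exists $\til{v}$ and a torus fixed point $p_{\til{v}}$ such that $a_{\til{v}}(p_{\til{v}})\neq 0$''), so your treatment is no less complete than the paper's.
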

	\begin{proof}
		Suppose $A\in H^0(X_0,\cu{E}nd_0(\cu{E}_0))$ is a non-zero section. By the same argument as in the proof of Theorem \ref{thm:simple}, but working on $P(\bb{L})$, we can assume for any vertex $\{v\}\in\msc{P}$, the diagonal terms of $\Pi_v^0(A)$ are all zero. Again, as $A$ is non-zero, there exists $v\in G_0(\bb{L})$ such that $\Pi_v^0(A)\neq 0$. Let $a_{\til{v}}$ be the $\cu{L}(\til{v})$-component of $\cu{E}nd_0(\cu{E}(v))$. By assumption, there exists $\til{v}\in\til{G}_0(\bb{L})$ and a torus fixed point $p_{\til{v}}\in X_v$ such that $a_{\til{v}}(p_{\til{v}})\neq 0$. The point $p_{\til{v}}$ corresponds to a maximal cell $\sigma\in\msc{P}$ that contains $v$. Lift $\sigma$ to the maximal cell $\til{\sigma}$ that contains $\til{v}$. All vertices of $\til{\sigma}$ are in $\til{G}_0(\bb{L})$, otherwise, $a_{\til{v}}(p_{\til{v}})=0$. It is now easy to see that $\partial\til{\sigma}\subset\til{G}_0(\bb{L})$ is a minimal cycle.
	\end{proof}
	
	\begin{remark}
		As $\til{G}_0(\bb{L})\to G_0(\bb{L})$ (here $G_0(\bb{L})$ is defined as in Definition \ref{def:subgraph rank 2}) is surjective and unramified, existence of minimal cycles in $\til{G}_0(\bb{L})$ is equivalent to existence of minimal cycles in $G_0(\bb{L})$. Thus we can weaken the assumption in Theorem \ref{thm:general_simple} to $G_0(\bb{L})$ having no minimal cycles.
	\end{remark}
	
	One advantage of the above theorem is that it works for \emph{any} polyhedral decompositions (not necessarily elementary ones). This provides a larger class of smoothable pairs. Finally, the converse of Theorem \ref{thm:general_simple} is also true if we know that, for any minimal cycle $\til{\gamma}\subset\til{G}_0(\bb{L})$ and vertex $\til{v}\in\til{\gamma}$, there exist sections of $\cu{L}(\til{v})$ that vanish along all the toric divisors which correspond to the half-edges of $\til{\gamma}$. The proof is similar to the rank $2$ case.

	\bibliographystyle{amsplain}
	\bibliography{geometry}
\end{document}